\renewcommand\paragraph{\@startsection{paragraph}{4}{\z@}%
  {-3.25ex\@plus -1ex \@minus -.2ex}%
  {1.5ex \@plus .2ex}%
  {\normalfont\normalsize\itshape}}
\long\def\symbolfootnote[#1]#2{\begingroup%
\def\thefootnote{\fnsymbol{footnote}}\footnote[#1]{#2}\endgroup} 
\newcounter{assumptions}
\newtheorem{ass}[assumptions]{Assumption}
\begin{document}

\newtheorem{lem}{Lemma}[section]
\newtheorem{cor}[lem]{Corollary}
\newtheorem{prop}[lem]{Proposition}
\newtheorem{thm}[lem]{Theorem}

\title[Sequential MCMC Methods in Multimodal Settings]{Non-asymptotic Error Bounds for Sequential MCMC Methods in Multimodal Settings}
\author{Nikolaus Schweizer}
\date{March 2012}

\address{Institute for Applied Mathematics, University of Bonn, Endenicher Allee 60, 53115 Bonn}

\email{nschweizer@iam.uni-bonn.de}
\thanks{Financial support of the German Research Foundation (DFG)  through the Hausdorff Center for Mathematics is gratefully acknowledged.}

\date{March 2012}

\subjclass[2000]{65C05, 60J10, 60B10, 47H20, 47D08}
\keywords{Markov Chain Monte Carlo, sequential Monte Carlo, importance sampling,
spectral gap, tempering, multimodal distributions}

\begin{abstract}
We prove non-asymptotic error bounds for Sequential MCMC methods in the case of multimodal target distributions. Our bounds depend in an explicit way on upper bounds on relative densities, on constants associated with local mixing properties of the MCMC dynamics, namely, local spectral gaps and local hyperboundedness,  and on the amount of probability mass shifted between effectively disconnected components of the state space.
\end{abstract}

\maketitle

\section{Introduction}

Sequential MCMC methods, see \cite{DDJ05} and the references therein, are a class of stochastic methods for  numerical integration with respect to target probability measures $\mu$ which cannot feasibly be attacked directly with standard MCMC methods due to the presence of multiple well-separated modes. The basic idea is to approximate the target distribution $\mu$ with a sequence of distributions $\mu_0,\ldots, \mu_n$ such that $\mu_n=\mu$ is the actual target distribution and such that $\mu_0$ is easy to sample from. The distributions $(\mu_k)_k$ interpolate between $\mu_0$ and $\mu_n$ in a suitable way and, roughly, the algorithm tries to carry  the good sampling properties of $\mu_0$ over to $\mu_n$.\bigskip

The algorithm constructs a system of $N$ particles which sequentially approximates the measures $\mu_0$ to $\mu_n$. It is initialized with $N$ independent samples from $\mu_0$ and then alternates two types of steps, Importance Sampling Resampling and MCMC: In the Importance Sampling Resampling steps, a cloud of particles approximating $\mu_k$ is transformed into a cloud of particles approximating $\mu_{k+1}$ by randomly duplicating and eliminating particles in a suitable way depending on the relative density between $\mu_{k+1}$ and $\mu_{k}$. In the MCMC steps, particles move independently according to an MCMC dynamics for the current target distribution in order to adjust better to the changed environment.  We focus on a simple Sequential MCMC method with Multinomial Resampling which is a basic instance of the class of algorithms introduced in Del Moral, Doucet and Jasra \cite{DDJ05}.\bigskip

The algorithm is essentially the same as the particle filter of Gordon, Salmond and Smith \cite{GSS93} yet the application - sampling from a fixed target distribution instead of filtering with an exogenous sequence of distributions - is different. The most common way of choosing the sequence $\mu_k$ consists in setting $\mu_k(dx) \sim \exp(-\beta_k H(x))\pi(dx)$ for an increasing sequence $\beta_k$ of (artificial) inverse temperature parameters, see, e.g., Neal \cite{N01}, a reference measure $\pi$ and a Hamiltonian $H$ chosen such that $\mu_n$ is identical to the desired target distribution. Smaller values of $\beta$ surpress differences in $H$ and thus lead to flatter distributions which are easier to sample using MCMC.\bigskip

Our main question is the following: How well does the mean $\eta^N_n(f)$ of an integrand $f$ with respect to the empirical measure $\eta^N_n$ of the particle system approximate the integral of interest $\mu_n(f)$? We address this question by proving non-asymptotic error bounds of the type
\[
\mathbb{E}[(\mu_n(f)-\eta_n^N(f))^2]\leq \frac{C_n(f)}{N},
\]
where $\mathbb{E}$ is the expectation with respect to the randomness in the particle system and $C_n(f)$ is a constant depending on the model parameters and on the function $f$ in an explicit way. More specifically, our results address the following question: Under which conditions does the particle dynamics work well in multimodal settings where conventional MCMC methods are trapped in local modes? We prove non-asymptotic error bounds which depend on a) an upper bound on relative densities, b) constants associated with local mixing properties of the MCMC dynamics,  and c) the amount of probability mass shifted between effectively disconnected components of the state space as we move from $\mu_0$ to $\mu_n$.\bigskip

The results of this paper fall into two groups: We first consider a simplified model of a multimodal state space and derive error bounds which allow to easily obtain some intuition about the algorithm's ability to cope with multimodality and to study questions of algorithmic design in a relatively non-technical way. These are the results about Sequential MCMC on trees in Section \ref{seqSMCtrees}. In Section \ref{LPlocal} we move on to a more standard Sequential MCMC framework and give similar error bounds which also take into account local changes in the sequence of distributions and local mixing properties.\bigskip

The motivation for studying a Sequential MCMC algorithm on trees stems from the fact that in typical applications the state space splits into more and more effectively disconnected modes over time as we move from $\mu_0$ to $\mu_n$. We project each such disconnected mode to a node in a tree and consider a Sequential MCMC dynamics which moves down the tree from the root, one level in the tree at each step. Thus at each time $k$ the (projected) state space consists of a number of nodes. Each node at level $k$ has at least one successor at level $k+1$. Each successor stands for one disjoint component of the original state space which can only be reached from its predecessor component at level $k$. The role of the ``MCMC'' transitions is limited to allocating particles from the nodes at level $k$ to their successors at level $k+1$. Particles cannot move between nodes at the same level. The latter assumption is in accordance with the fact that transitions between effectively disconnected components of the original unprojected state occur very rarely for the local MCMC dynamics applied in practice. We do not use any mixing properties of the MCMC dynamics since such properties can only be expected to have an effect \textit{within} each disconnected component -- they will not help to correct errors made in allocating particles to modes.\bigskip 

We show that in this reduced setting the algorithm's approximation error can be controlled in terms of a constant which captures how strongly the components gain probability mass over time. Roughly speaking, the algorithm works well if for all $j<k$ no disconnected component under $\mu_j$ carries much less weight than its successors under $\mu_k$. The intuition for this is straightforward: If a node $x$ at time $j$ is much less important under $\mu_j$ than its successors at level $k$, there is a substantial probability that there are no particles in $x$. If $\mu_j(x)$ is small, we may then still have a reasonable particle approximation of $\mu_j$. But if we miss $x$ we also miss its successors at level $k$ and if these are important we obtain a bad approximation of $\mu_k$: Transition states with small weight create a bottleneck for the particle dynamics. These observations follow from a generic non-asymptotic bound for the quadratic error of Sequential MCMC presented in Theorem \ref{thmBound2} and from Proposition \ref{propDJK} which shows how to apply this bound to the tree model.\bigskip

To demonstrate how these results may help to address questions of algorithmic design in a relatively non-technical way, we turn in Section \ref{anexample} to a comparison of resampling particles as opposed to weighting them as is done in the Sequential Importance Sampling method \cite{N01,CMR05}. We provide a detailed analysis of an example where Sequential MCMC with resampling works well while the error of Sequential Importance Sampling increases exponentially over time. This shows that resampling with a finite number of particles can indeed overcome difficulties associated with multimodality in settings where Sequential Importance Sampling fails.\bigskip

Section \ref{LPlocal} contains our second group of results. We consider a more standard Sequential MCMC setting with a sequence of mutually absolutely continuous measures $(\mu_k)_k$ on a common state space $E$.  Instead of the tree structure we consider a sequence of increasingly finer partitions of $E$ and assume that the MCMC dynamics does not move between partition elements. We show that in addition to conditions on the importance gains of disconnected components similar to those in the setting of trees, two additional conditions are sufficient for a good performance of the algorithm: Uniform upper bounds on relative densities between the $\mu_k$ ensure that the importance sampling resampling step works  well. Sufficiently good mixing \textit{within} modes is needed to decorrelate particles after resampling and to explore the state space locally. The mixing conditions we consider follow, e.g., from local hyperboundedness and local Poincaré inequalities for the MCMC steps. These results are developed in three steps: Theorem \ref{thmBound} recalls a generic non-asymptotic error bound for Sequential MCMC from \cite{Schw12}. Proposition \ref{propCon} shows how the constants in the bound of Theorem \ref{thmBound} can be controlled in terms of local $L_{2p}-L_p$-stability conditions for the Feynman-Kac propagator associated with the particle dynamics. Finally, Proposition \ref{corGesamtL} shows how the latter stability conditions follow from the more elementary mixing and boundedness conditions mentioned above.\bigskip 

There is by now a substantial literature on error bounds for Sequential MCMC and related particle systems beginning with the central limit theorems in Del Moral \cite{DM96}, Chopin \cite{C04}, Künsch \cite{K05} and Capp\'e, Moulines and Ryd\'en, \cite{CMR05}. See Del Moral \cite{DM05} for an overview and many results, and Douc and Moulines \cite{DM08} for a recent contribution. Non-asymptotic error bounds, i.e., error bounds for a fixed number of particles are comparatively less studied, see, e.g., Del Moral and Miclo \cite{DM00}, Theorem 7.4.4 of Del Moral \cite{DM05}, C\'erou, Del Moral and Guyader \cite{CDG11} and Whiteley \cite{W11}. The results in the present paper are based on techniques which were developed in Eberle and Marinelli \cite{EM09,EM08} for a related continuous-time particle system and adapted to discrete-time in Schweizer \cite{Schw12}. See these papers for more discussion of the overall approach.\bigskip

The vast majority of the Sequential MCMC literature has focused on the case where the MCMC dynamics mixes well for all $\mu_k$. The only precursors of our results on multimodal target distributions appear to be in Eberle and Marinelli \cite{EM09, EM08} who consider the continuous-time case and restrict attention to the case where MCMC mixes well within the elements of a partition of the state space which is fixed for all $\mu_k$. The case of increasingly finer partitions considered in the present paper is more in line with what is observed in typical applications where $\mu_0$ is easy to sample and $\mu_n$ is multimodal.\bigskip

The idea of reducing complicated multimodal distributions to trees, also known as disconnectivity graphs, has been studied extensively in the chemical physics literature, see Chapter 5 of Wales \cite{W03} for an introduction. Notably, the trees we consider here are only very loosely related to the genealogical trees of the particle system studied in C\'erou, Del Moral and Guyader \cite{CDG11}.\bigskip

A number of related results for the Simulated Tempering and Parallel Tempering algorithms have been proved, in increasing generality in \cite{MZ02, BR04, DSH09, DSH09b}. Basically, tempering algorithms differ from Sequential MCMC by substituting the Importance Sampling Resampling steps with suitable MCMC steps between the levels $\mu_k$. Technically, these results rely on decomposition results for bounding spectral gaps of Markov chains, see \cite{CPS92, MR02, JSTV04}. These decomposition results have the advantage that they do not rely on the assumption that the MCMC dynamics does not move between effectively disconnected components which we made. Therefore, these results can be applied directly to some simple models of interest such as the mean field Ising model. All these results on Tempering algorithms are restricted to simple partition structures with global mixing under $\mu_0$ and good mixing within the components of a fixed partition of the state space for $\mu_1,\ldots, \mu_n$. See, e.g.,  Wales \cite{W03} for many examples from chemical physics which correspond to more general sequences of partitions.\bigskip

The results of the present paper are extracted from a more detailed presentation in the dissertation \cite{Schw11}. Section \ref{Preliminaries} introduces the setting, presents the generic error bound for Sequential MCMC from \cite{Schw12} and proves a variation of the latter error bound. Sections \ref{seqSMCtrees} and
\ref{LPlocal} contain our results on Sequential MCMC for multimodal targets as outlined above. Section \ref{MMrellDiscussion} provides further comparison of the results of Sections \ref{seqSMCtrees} and \ref{LPlocal} and discusses their implications.

\section{Preliminaries}\label{Preliminaries}

Section \ref{notation} introduces the basic notation. Section \ref{tmvmodel} introduces the measure-valued model which is approximated in the algorithm. Section \ref{IPS} introduces the interacting particle system which is simulated in running the algorithm. Section \ref{VariancesWEA} collects some basic results found, e.g., in Del Moral and Miclo \cite{DM00} as well as the generic non-asymptotic error bound from \cite{Schw12} which is applied in the analysis of Section  \ref{LPlocal}. Moreover, we prove a second, related error bound which will be used in Section  \ref{seqSMCtrees}.  The setting introduced here covers the models analyzed in Sections  \ref{seqSMCtrees} and \ref{LPlocal} as special cases.

\subsection{Notation}\label{notation} 

Let $(E,r)$ be a complete, separable metric space and let $\mathcal{B}(E)$ be the $\sigma$-algebra of Borel subsets of $E$. Denote by $M(E)$ the space of finite signed Borel measures on $E$. Let $M_1(E)\subset M(E)$ be the subset of all probability measures. Let $B(E)$ be the space of bounded, measurable, real-valued functions on $E$.\bigskip

For $\mu\in M(E)$ and $f\in B(E)$ define $\mu(f)$ by
\[
\mu(f)=\int_E f(x) \mu(dx)
\]
and $\text{Var}_\mu(f)$ by
\[
\text{Var}_\mu(f)=\mu(f^2)-\mu(f)^2.
\]

Let $(\widetilde{E},\widetilde{r})$ be another Polish space. Consider an integral operator $K(x,A)$ with $K(x,\cdot)\in M(\widetilde{E})$ for $x \in E$ and $K(\cdot, A)\in B(E)$ for $A\in \mathcal{B}(\widetilde{E})$. We define for $\mu\in M(E)$ the measure $\mu K\in M(\widetilde{E})$ by
\[
\mu K(A)=\int_E K(x,A)\mu (dx) \;\;\;\; \forall A \in \mathcal{B}(\widetilde{E}). 
\]
For $f \in B(\widetilde{E})$ we denote by $K(f)\in B(E)$ the function given by
\[
K(f)(x)=K(x,f)= \int_E f(z) K(x,dz)\;\;\;\; \forall x\in E.
\]

\subsection{The Measure-Valued Model}\label{tmvmodel}
Consider a sequence of Polish spaces $(E_k,r_k)$ and a sequence of probability measures $(\mu_k)_{k=0}^n$, $\mu_k \in M_1(E_k)$. This is the sequence of measures we wish to approximate with the algorithm introduced in Section \ref{IPS}. The measures $\mu_k$ are related through
\[
\mu_k(f)= \frac{\mu_{k-1}(g_{k-1,k} K_k(f))}{\mu_{k-1}(g_{k-1,k})} \;\;\;\; \forall f \in B(E_k) 
\]
for positive functions $g_{k-1,k}\in B(E_{k-1})$ and transition kernels $K_k$ with $K_k(x,\cdot)\in M_1(E_{k})$ for $x \in E_{k-1}$ and $K_k(\cdot, A)\in B(E_{k-1})$ for $A\in \mathcal{B}(E_k)$. We define the probability distribution $\hat{\mu}_k \in M_1(E_{k-1})$ by 
\[
\hat{\mu}_k(f)= \frac{\mu_{k-1}(g_{k-1,k} f)}{\mu_{k-1}(g_{k-1,k})} \;\;\;\; \forall f \in B(E_{k-1}).
\]
This implies $\hat{\mu}_k(K_k(f))=\mu_k(f)$ for $f\in B(E_k)$. While we need this slightly more general setting in the analysis of Section \ref{seqSMCtrees}, the example to have in mind is the one where the state spaces $E_k$ are identical and where $K_k$ encompasses many steps of an MCMC dynamics (e.g., Metropolis) with stationary distribution $\mu_k$. In that case $g_{k-1,k}$ becomes an unnormalized relative density between $\mu_{k-1}$ and $\mu_k$ and we have $\hat{\mu}_k=\mu_k$.\bigskip

Next we introduce the Feynman-Kac propagator $q_{j,k}$ which will be the central object of our error analysis. Define the mapping $q_{k-1,k}:B(E_k)\rightarrow B(E_{k-1})$ by
\[
q_{k-1,k}(f)=\frac{g_{k-1,k}K_k(f)}{\mu_{k-1}(g_{k-1,k})}.
\]
Observe that this implies
\[
\mu_{k}(f)=\mu_{k-1}(q_{k-1,k}(f))
\]
Furthermore define for $0 \leq j < k \leq n$ the mapping $q_{j,k}:B(E_k)\rightarrow B(E_j)$ by
\[
q_{j,k}(f)=q_{j,j+1}(q_{j+1,j+2}(\ldots q_{k-1,k}(f)))
\]
and $q_{k,k}(f)=f$.
Note that for $f\in B(E_k)$ we have the relation 
\[
\mu_j(q_{j,k}(f))=\mu_k(f) \;\;\;\text{ for }0\leq j \leq  k \leq n
\]
and the property
\[
q_{j,l} (q_{l,k}(f)) = q_{j,k}(f) \;\;\;\text{ for }0 \leq j <l< k \leq n.
\]

\subsection{The Interacting Particle System}\label{IPS}

In the Sequential MCMC algorithm, we approximate the measures $(\mu_k)_k$ by simulating the interacting particle system introduced in the following. We start with $N$ independent samples $\xi_0=(\xi_0^1,\ldots,\xi_0^N)$ from $\mu_0$. The particle dynamics alternates two steps: Importance Sampling Resampling and Mutation: A vector of particles $\xi_{k-1}$ approximating $\mu_{k-1}$ is transformed into a vector $\hat{\xi}_k$ approximating $\hat{\mu}_k$ by drawing $N$ conditionally independent samples from the empirical distribution of $\xi_{k-1}$ weighted with the functions $g_{k-1,k}$. Afterwards, $\hat{\xi}_k$ is transformed into a vector $\xi_k$ approximating $\mu_k$ by moving the particles $\hat{\xi}_k^i$ independently with the transition kernel $K_k$.\bigskip 

We thus have two arrays of random variables $(\xi_k^j)_{0\leq k\leq n,1\leq j\leq N}$ and $(\hat{\xi}_k^j)_{1\leq k\leq n,1\leq j\leq N}$ where $\xi_k^j$ and $\hat{\xi}_{k+1}^j$ take values in $E_k$. Denote respectively by $\mathbb{P}[\cdot]$ and $\mathbb{E}[\cdot]$ probabilities and expectations taken with respect to the randomness in the particle system, i.e., with respect to the random variables $(\xi_k^j)_{k,j}$ and $(\hat{\xi}_k^j)_{k,j}$. Denote by $\mathcal{F}_k$ the $\sigma$-algebra generated by $\xi_0,\ldots \xi_k$ and $\hat{\xi}_1,\ldots \hat{\xi}_k$ and by $\hat{\mathcal{F}}_k$ the $\sigma$-algebra generated by $\xi_0,\ldots \xi_{k-1}$ and $\hat{\xi}_1,\ldots \hat{\xi}_k$. Denote by $\eta_k^N$ the empirical measure of $\xi_k$, i.e.,
\[
\eta_k^N=\frac1N \sum_{i=1}^N \delta_{\xi_k^i}.
\]
The algorithm proceeds as follows: 

\begin{itemize}
\item[(i)] Draw $\xi_0^1,\ldots,\xi_0^N$ independently from $\mu_0$.
\item[(ii)] For $k=1,\ldots,n$,
	\begin{itemize}
		\item[(a)] draw $\hat{\xi}_k=(\hat{\xi}_k^1,\ldots,\hat{\xi}_k^N)$ according to 
			\[
				\mathbb{P}[\hat{\xi}_k\in dx | \mathcal{F}_{k-1} ]= \prod_{j=1}^N \sum_{i=1}^N \frac{g_{k-1,k}(\xi_{k-1}^i)}{\sum_{l=1}^N 	
					g_{k-1,k}(\xi_{k-1}^l)}\delta_{\xi_{k-1}^i}(dx^j),
			\]
		\item[(b)] draw $\xi_k=(\xi_k^1,\ldots,\xi_k^N)$ according to
			\[
			\mathbb{P}[\xi_{k}\in dx | \hat{\mathcal{F}}_k ]= \prod_{j=1}^N K_k(\hat{\xi}_k^j,dx^j).
			\]
	\end{itemize}
\item[(iii)] Approximate $\mu_n(f)$ by
\[
\eta_n^N(f)=\frac1N \sum_{i=1}^N f(\xi_n^i).
\]
\end{itemize}

In the following we will study, how well $\eta_n^N$ approximates $\mu_n$.

\subsection{Non-asymptotic error bounds}\label{VariancesWEA}
 
We are interested in proving efficient upper bounds for the quantities
\[
\mathbb{E}[|\eta_n^N(f)-\mu_n(f)|^2]
\]
and
\[
\mathbb{E}[|\eta_n^N(f)-\mu_n(f)|].
\]
These quantities can be controlled in terms of the approximation error of a weighted empirical measure $\nu_n^N(f)$ which is easier to handle. We next introduce this measure  $\nu_n^N(f)$ and recall an explicit non-asymptotic upper bound on
\[
\mathbb{E}[|\nu_n^N(f)-\mu_n(f)|^2].
\]
Define for $0\leq k\leq n$
\[
\nu_k^N(f)=\varphi_k \, \eta_k^N(f)
\]
where $\varphi_k$ is given by
\[
\varphi_k=\prod_{j=0}^{k-1} \eta_j^N( q_{j,j+1}(1) ) \text{ for }k \geq 1 \text{ and }\varphi_0=1.
\]
As shown in Del Moral and Miclo \cite{DM00}, $\nu_k^N(f)$ is an unbiased estimator for $\mu_k(f)$, i.e., $E[\nu_k^N(f)]=\mu_k(f)$, and we have  
\begin{equation}\label{erwetak}
\mathbb{E}[\nu_k^N (f)| \mathcal{F}_{k-1}]=\nu_{k-1}^N ( q_{k-1,k}(f) ).
\end{equation}

The connection between the approximation errors of $\eta_n^N(f)$ and $\nu_n^N(f)$ is established in the following lemma.

\begin{lem}\label{EMbounds}
For $f\in B(E_n)$ define $f_n=f-\mu_n(f)$  and denote by $\|\cdot\|_{\text{\textnormal{sup}},n}$ the supremum norm on $B(E_n)$. Then we have the bound
\begin{equation}\label{EMbound1}
\mathbb{E}[(\eta_n^N(f)-\mu_n(f))^2]\leq 2\,\text{\textnormal{Var}}(\nu_n^N(f_n))+2\,\|f_n \|_{\text{\textnormal{sup}},n}^2\text{\textnormal{Var}}(\nu_n^N(1)).
\end{equation}
\end{lem}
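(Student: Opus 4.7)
The plan is to exploit the simple algebraic identity $\nu_n^N(g)=\varphi_n\,\eta_n^N(g)$ together with the unbiasedness property $\mathbb{E}[\nu_n^N(g)]=\mu_n(g)$ recalled just before the lemma, and to reduce the bound on $\eta_n^N(f)-\mu_n(f)$ to a bound on two variance-type quantities involving $\nu_n^N$.

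First I would observe that since $\eta_n^N$ is a probability measure, $\eta_n^N(f)-\mu_n(f)=\eta_n^N(f_n)$. Applying the definition $\nu_n^N(f_n)=\varphi_n\eta_n^N(f_n)=\nu_n^N(1)\,\eta_n^N(f_n)$ then yields the key identity
\[
\eta_n^N(f_n)=\nu_n^N(f_n)-\bigl(\nu_n^N(1)-1\bigr)\,\eta_n^N(f_n).
\]
From the elementary bound $|\eta_n^N(f_n)|\le\|f_n\|_{\text{sup},n}$ and the triangle inequality this gives
\[
\bigl|\eta_n^N(f_n)\bigr|\;\le\;\bigl|\nu_n^N(f_n)\bigr|\;+\;\|f_n\|_{\text{sup},n}\,\bigl|\nu_n^N(1)-1\bigr|.
\]

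Squaring and using $(a+b)^2\le 2a^2+2b^2$, then taking expectations, leaves
\[
\mathbb{E}\bigl[(\eta_n^N(f)-\mu_n(f))^2\bigr]\;\le\;2\,\mathbb{E}\bigl[\nu_n^N(f_n)^2\bigr]\;+\;2\,\|f_n\|_{\text{sup},n}^{2}\,\mathbb{E}\bigl[(\nu_n^N(1)-1)^2\bigr].
\]
The unbiasedness noted just before the lemma (applied with the functions $f_n$ and $1$ on $E_n$) gives $\mathbb{E}[\nu_n^N(f_n)]=\mu_n(f_n)=0$ and $\mathbb{E}[\nu_n^N(1)]=\mu_n(1)=1$, so the two expectations on the right-hand side coincide with the variances $\text{Var}(\nu_n^N(f_n))$ and $\text{Var}(\nu_n^N(1))$ respectively, yielding \eqref{EMbound1}.

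There is no real obstacle here; the only subtle point is the decision to split off the factor $\nu_n^N(1)-1$ rather than divide by $\nu_n^N(1)$ (which would lead to the awkward and potentially unbounded factor $1/\varphi_n$). The uniform bound $|\eta_n^N(f_n)|\le\|f_n\|_{\text{sup},n}$ is exactly what is needed to absorb this factor and to produce the second term in \eqref{EMbound1} with the supremum norm of $f_n$.
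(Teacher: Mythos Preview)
Your argument is correct. The paper itself does not give a proof of this lemma; it simply refers to \cite{Schw12}. Your decomposition $\eta_n^N(f_n)=\nu_n^N(f_n)-(\nu_n^N(1)-1)\eta_n^N(f_n)$ together with the sup-norm bound on $\eta_n^N(f_n)$, the inequality $(a+b)^2\le 2a^2+2b^2$, and the unbiasedness of $\nu_n^N$ is exactly the standard route to this estimate and is presumably what the cited reference contains.
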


See \cite{Schw12} for a proof and a similar bound on the absolute error. Thus we can indeed control the approximation error of $\eta_n^N$ in terms of the approximation error of $\nu_n^N$. We next present the two non-asymptotic error bounds on which the analysis of the later sections relies. These bounds reduce the problem of controlling the particle system to the problem of verifying suitable stability properties of the operators $q_{j,k}$. To this end for $0 \leq j \leq n$,  let $\|\cdot\|_j$ be a norm on the function space $B(E_j)$ such that $\|f\|_j<\infty$ for all $f\in B(E_j)$. Then the central error bound of \cite{Schw12} can be stated as follows:

\begin{thm}\label{thmBound}
For $0 \leq j <k \leq n$, let $c_{j,k}$ be a constant such that for all $f\in B(E_k)$ the following stability inequality for the propagator $q_{j,k}$ is satisfied
\begin{equation}\label{cjninequality}
\max\Big(\|1\|_j\|q_{j,k}(f)^2\|_j,\|q_{j,k}(f)\|_j^2,\|q_{j,k}(f^2)\|_j\Big)  \leq c_{j,k}\|f\|_k^2.
\end{equation}

Define $\widehat{c}_{k}$, $\widehat{v}_k$ and $\varepsilon_k^{N}$ by
\[
\widehat{c}_{k}=\sum_{j=0}^{k-1}c_{j,k} \Big(2+\| q_{j,j+1}(1)-1 \|_j\Big),
\]
by
\[
\widehat{v}_{k}=\sup\left\{\left.
\sum_{j=0}^k \text{Var}_{\mu_j}(q_{j,k}(f)) \right| \,f\in B(E_k),\, \|f\|_k\leq 1
 \right\},
\]
and by
\[
\varepsilon_k^{N}=\sup\Big\{
\mathbb{E}\Big[|\nu_k^N(f)-\mu_k(f)|^2\Big]\, \Big|\,f\in B(E_k),\, \|f\|_k\leq 1
 \Big\}.
\]
Furthermore define
\[
\overline{c}_k=\max_{j\leq k} \widehat{c}_{j},\;\;\;\;\; \overline{v}_k=\max_{j\leq k} \widehat{v}_{j}\;\;\; \text{ and }\;\;\; \overline{\varepsilon}_k^{N}= \max_{j\leq k} \varepsilon_j^N.
\]
Then for all $f\in B(E_n)$ we have
\begin{equation}\label{thmf1c}
N \mathbb{E}\Big[|\nu_n^N(f)-\mu_n(f)|^2\Big] \leq \sum_{j=0}^n \text{\textnormal{Var}}_{\mu_j}(q_{j,n}(f))+\| f \|_n^2 \widehat{c}_{n}\;  \overline{\varepsilon}_n^{N}
\end{equation}
and, if $N\geq 2 \overline{c}_n$,
\begin{equation}\label{thmf2c}
\overline{\varepsilon}_n^{N}\leq 2\frac{\overline{v}_n}{N}.
\end{equation}
\end{thm}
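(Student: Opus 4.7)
The plan is to prove \eqref{thmf1c} via an orthogonal martingale decomposition of $\nu_n^N(f)-\mu_n(f)$ and then to derive \eqref{thmf2c} from \eqref{thmf1c} by induction on $n$. The stability inequality \eqref{cjninequality} enters only in the estimation of the ``cross terms'' produced by the martingale decomposition, which are responsible for the $\widehat{c}_n\,\overline{\varepsilon}_n^N$ correction.

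\textbf{Martingale decomposition for \eqref{thmf1c}.} Set $M_k:=\nu_k^N(q_{k,n}(f))$ for $0\le k\le n$. The conditional unbiasedness \eqref{erwetak}, together with the semigroup identity $q_{k-1,k}\circ q_{k,n}=q_{k-1,n}$, gives
\[
\mathbb{E}[M_k\mid \mathcal{F}_{k-1}]=\nu_{k-1}^N(q_{k-1,k}(q_{k,n}(f)))=M_{k-1},
\]
so $(M_k)$ is an $(\mathcal{F}_k)$-martingale with $M_0=\eta_0^N(q_{0,n}(f))$, $\mathbb{E}[M_0]=\mu_n(f)$, and $M_n=\nu_n^N(f)$. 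Hence
\[
\mathbb{E}[(\nu_n^N(f)-\mu_n(f))^2]=\mathrm{Var}(M_0)+\sum_{k=1}^n\mathbb{E}[\mathrm{Var}(M_k\mid\mathcal{F}_{k-1})].
\]
The variance of $M_0$ is just $N^{-1}\mathrm{Var}_{\mu_0}(q_{0,n}(f))$ since $\xi_0$ is i.i.d.\ from $\mu_0$. For $k\ge 1$ I would further condition on $\hat{\mathcal{F}}_k$, so that the conditional variance splits into the variance contributed by the mutation step (particles moved independently under $K_k$) and the variance contributed by the resampling step (conditionally multinomial draws from the $g_{k-1,k}$-weighted empirical of $\xi_{k-1}$). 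A direct computation, combined with $\varphi_k=\varphi_{k-1}\,\eta_{k-1}^N(q_{k-1,k}(1))$, expresses each conditional variance as $N^{-1}$ times an empirical quantity of the form $\nu_{k-1}^N(q_{k-1,n}(f)^2)$ plus lower-order pieces. Taking expectations and using unbiasedness replaces these empirical averages by $\mu_{k-1}$-integrals, producing the leading term $N^{-1}\sum_j\mathrm{Var}_{\mu_j}(q_{j,n}(f))$. The remaining discrepancies are of the form $\mathbb{E}|\nu_{j}^N(g)-\mu_j(g)|$ for various $g$ built from $q_{j,n}(f)^2$, $q_{j,n}(f)$, and $q_{j,j+1}(1)-1$; these are bounded via Cauchy--Schwarz by $\varepsilon_j^N$ applied to the relevant renormalized test function. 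Here the three quantities in \eqref{cjninequality} are exactly what is needed to renormalize $q_{j,n}(f)^2$, $q_{j,n}(f)$ and $q_{j,n}(f^2)$ back into the unit ball of $\|\cdot\|_j$, with the factor $\|q_{j,j+1}(1)-1\|_j+2$ arising from separating the main and lower-order contributions at each level.

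\textbf{Induction for \eqref{thmf2c}.} Specializing \eqref{thmf1c} to $\|f\|_n\le 1$ and taking the supremum over such $f$ yields
\[
N\varepsilon_n^N\;\le\;\widehat{v}_n+\widehat{c}_n\,\overline{\varepsilon}_n^N.
\]
For $n=0$ the bound $\varepsilon_0^N\le\widehat{v}_0/N\le 2\overline{v}_0/N$ is immediate from the i.i.d.\ initialization. For the induction step, assume $\overline{\varepsilon}_{n-1}^N\le 2\overline{v}_{n-1}/N$. Either $\overline{\varepsilon}_n^N=\overline{\varepsilon}_{n-1}^N$, in which case the bound propagates at once, or $\overline{\varepsilon}_n^N=\varepsilon_n^N$, in which case the display above reads $(N-\widehat{c}_n)\varepsilon_n^N\le\widehat{v}_n$. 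Since $N\ge 2\overline{c}_n\ge 2\widehat{c}_n$ we conclude $\varepsilon_n^N\le 2\widehat{v}_n/N\le 2\overline{v}_n/N$.

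\textbf{Main obstacle.} The technical core is the conditional-variance calculation for the combined resampling-then-mutation step and the bookkeeping that extracts $\nu_{k-1}^N(q_{k-1,n}(f)^2)$ as a leading term while isolating residuals that are $\|\cdot\|_{k-1}$-controllable via \eqref{cjninequality}. In particular, handling the multiplicative random factor $\varphi_k$ requires care: it must be rewritten using $q_{k-1,k}(1)=g_{k-1,k}/\mu_{k-1}(g_{k-1,k})$ so that normalizing constants cancel and unbiasedness can be applied levelwise. Everything else is bookkeeping.
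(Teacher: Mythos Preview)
The paper does not actually prove this theorem here; it is quoted from \cite{Schw12}. What the paper does prove is the variant Theorem~\ref{thmBound2}, and comparing your proposal to that proof is instructive.

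Your approach and the paper's are essentially the same. The variance identity \eqref{VarRep} used for Theorem~\ref{thmBound2},
\[
\mathbb{E}[|\nu_n^N(f)-\mu_n(f)|^2]=\frac1N\,\mathbb{E}\Big[\nu_n^N(1)\nu_n^N(f^2)-\mu_n(f)^2\Big]+\frac1N\sum_{j=0}^{n-1}\mathbb{E}\big[U_{j,n}^N(f)\big],
\]
is nothing other than your martingale decomposition written out explicitly (the $U_{j,n}^N$ are the conditional-variance increments). The paper attributes the identity to Del Moral--Miclo and simply invokes it, whereas you rederive it via conditioning on $\hat{\mathcal F}_k$ and $\mathcal F_{k-1}$. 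The cross-term estimate you describe---Cauchy--Schwarz plus unbiasedness, feeding into $\varepsilon_j^N$---is exactly inequality \eqref{abschfgc}. The paper remarks that Theorems~\ref{thmBound} and~\ref{thmBound2} differ only in which variance expression is used; your martingale computation naturally produces terms of the form $\nu_{k-1}^N(q_{k-1,k}(1))\,\nu_{k-1}^N(q_{k-1,k}(q_{k,n}(f)^2))$, and splitting off $q_{k-1,k}(1)-1$ is precisely what generates the factor $2+\|q_{j,j+1}(1)-1\|_j$ in $\widehat c_k$, as you anticipate.

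One minor deviation: for \eqref{thmf2c} the paper (in the proof of Theorem~\ref{thmBound2}) does not induct. It takes the supremum in \eqref{thmf1c} over $\|f\|_k\le 1$ \emph{and} over $k\le n$ simultaneously, obtaining $N\overline\varepsilon_n^N\le \overline v_n+\overline c_n\,\overline\varepsilon_n^N$ directly, and then rearranges. Your induction works equally well and yields the same bound; the direct rearrangement is just slightly shorter.
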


For the analysis of Sequential MCMC on trees we rely on a variation of Theorem \ref{thmBound} which is stated and proved next. The basic difference between the two theorems is that they rely on different expressions for the variance of $\nu_n^N(f)$.

\begin{thm}\label{thmBound2}
For $0 \leq j \leq k \leq n$, let $d_{j,k}$ be a constant such that for all $f\in B(E_k)$ the following stability inequality for the propagator $q_{j,k}$ is satisfied,
\begin{equation}\label{djninequality}
\max\Big(\|1\|_j \|q_{j,k}(f)^2\|_j, \|q_{j,k}(f)\|_j^2 \Big)\leq d_{j,k}\| f\|_k^2.
\end{equation}
Define $\widehat{v}_{k}$, $\overline{v}_k$, $\varepsilon_k^{N}$ and  $\overline{\varepsilon}_k^{N}$ as in Theorem \ref{thmBound} and let
\[
\widehat{d}_k=2 \,\sum_{j=0}^k d_{j,k}
\]
and $\overline{d}_k=\max_{j \leq k} \widehat{d}_k$. Then for all $f\in B(E_n)$ we have
\begin{equation}\label{thmf1c2}
N \mathbb{E}\Big[|\nu_n^N(f)-\mu_n(f)|^2\Big] \leq \sum_{j=0}^n \text{\textnormal{Var}}_{\mu_j}(q_{j,n}(f))+\| f \|_n^2 \widehat{d}_{n}\;  \overline{\varepsilon}_n^{N}
\end{equation}
and, if $N\geq 2 \overline{d}_n$,
\begin{equation}\label{thmf2c2}
\overline{\varepsilon}_n^{N}\leq 2\frac{\overline{v}_n}{N}.
\end{equation}
\end{thm}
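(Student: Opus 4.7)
The plan is to follow the orthogonal-decomposition strategy behind Theorem \ref{thmBound} with a step-wise variance split adapted to the weaker two-clause stability hypothesis \eqref{djninequality}. Setting $h_k := q_{k,n}(f)$, the conditional-mean identity \eqref{erwetak} implies that the increments $D_k := \nu_k^N(h_k) - \nu_{k-1}^N(q_{k-1,n}(f))$ for $k \geq 1$, together with $D_0 := \nu_0^N(q_{0,n}(f)) - \mu_n(f)$, form an $L^2$-orthogonal family summing to $\nu_n^N(f) - \mu_n(f)$. So the task reduces to bounding each $N\,\mathbb{E}[D_k^2]$; the $k = 0$ term yields $\text{Var}_{\mu_0}(q_{0,n}(f))$ directly by independence of the initial sample.

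For $k \geq 1$, the martingale property gives $\mathbb{E}[D_k^2] = \mathbb{E}[\text{Var}(\nu_k^N(h_k)\mid\mathcal{F}_{k-1})]$. Since the particles $\xi_k^i$ are conditionally i.i.d.\ with common law $\Lambda_k$ (resampling followed by mutation), one has $N\,\text{Var}(\nu_k^N(h_k)\mid\mathcal{F}_{k-1}) = \varphi_k^2 \text{Var}_{\Lambda_k}(h_k)$. Combining the identity $\varphi_k \Lambda_k(g) = \nu_{k-1}^N(q_{k-1,k}(g))$ with unbiasedness $\mathbb{E}[\nu_{k-1}^N(G)] = \mu_{k-1}(G)$ and $\mu_{k-1}(q_{k-1,k}(g)) = \mu_k(g)$ will yield the split
\[
\mathbb{E}[\varphi_k^2 \text{Var}_{\Lambda_k}(h_k)] = \text{Var}_{\mu_k}(h_k) + \text{Cov}\bigl(\nu_{k-1}^N(q_{k-1,k}(1)), \nu_{k-1}^N(q_{k-1,k}(h_k^2))\bigr) - \text{Var}\bigl(\nu_{k-1}^N(q_{k-1,n}(f))\bigr).
\]
Summing the leading $\text{Var}_{\mu_k}(h_k)$ contributions over $k = 0, \ldots, n$ assembles the first summand of \eqref{thmf1c2}.

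To bound the remaining covariance and variance terms I would use the recursive estimate $\text{Var}(\nu_{k-1}^N(G)) \leq \overline{\varepsilon}_n^N \|G\|_{k-1}^2$ together with Cauchy--Schwarz and both clauses of \eqref{djninequality}: the clause $\|q_{k-1,k}(1)\|_{k-1}^2 \leq d_{k-1,k}\|1\|_k^2$ handles the weight-side variance, while $\|1\|_k\|q_{k,n}(f)^2\|_k \leq d_{k,n}\|f\|_n^2$ controls the squared-function factor, the two occurrences of $\|1\|_k$ cancelling inside the Cauchy--Schwarz product. After using the negative $-\text{Var}(\nu_{k-1}^N(q_{k-1,n}(f)))$ to absorb spurious contributions and collecting the factor of $2$ into $\widehat{d}_n = 2\sum_{j=0}^n d_{j,n}$, this yields the correction $\|f\|_n^2 \widehat{d}_n \overline{\varepsilon}_n^N$ of \eqref{thmf1c2}. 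The inequality \eqref{thmf2c2} is then immediate: applying \eqref{thmf1c2} with $\|f\|_k \leq 1$ for every $k \leq n$ and taking suprema gives the linear inequality $N\,\overline{\varepsilon}_n^N \leq \overline{v}_n + \overline{d}_n\,\overline{\varepsilon}_n^N$, which under the hypothesis $N \geq 2\overline{d}_n$ rearranges to $\overline{\varepsilon}_n^N \leq 2\overline{v}_n/N$.

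The hardest part is the remainder estimate: since \eqref{djninequality} does not include the clause $\|q_{j,k}(f^2)\|_j \leq d_{j,k}\|f\|_k^2$ that is available in Theorem \ref{thmBound}, one cannot bound $\|q_{k-1,k}(h_k^2)\|_{k-1}$ at a single level. The argument must instead exploit the precise form of the factorization $\|1\|_j \cdot \|q_{j,k}(f)^2\|_j$ so that the cancellation of $\|1\|_k$ described above takes place; this is the \emph{different expression for the variance of $\nu_n^N(f)$} alluded to before the theorem's statement, and it is what drops the factor $(2 + \|q_{j,j+1}(1)-1\|_j)$ of Theorem \ref{thmBound} to the cleaner constant $2$ in $\widehat{d}_n$.
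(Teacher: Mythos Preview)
Your martingale decomposition is correct, but it leads to the wrong conditional-variance expression for this theorem. At step $k$ you obtain
\[
\varphi_k^2\,\text{Var}_{\Lambda_k}(h_k)=\nu_{k-1}^N\bigl(q_{k-1,k}(1)\bigr)\,\nu_{k-1}^N\bigl(q_{k-1,k}(h_k^2)\bigr)-\nu_{k-1}^N\bigl(q_{k-1,n}(f)\bigr)^2,
\]
so the object you must control in the covariance term is $\|q_{k-1,k}(h_k^2)\|_{k-1}$, i.e.\ the propagator applied to a \emph{squared} function. Hypothesis \eqref{djninequality} only bounds $\|q_{j,k}(f)^2\|_j$ (square \emph{outside} the propagator) and $\|q_{j,k}(f)\|_j^2$; it gives no direct estimate of $\|q_{j,k}(f^2)\|_j$. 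Your proposed cancellation of $\|1\|_k$ factors uses the second clause to write $\|q_{k-1,k}(h_k^2)\|_{k-1}\le\sqrt{d_{k-1,k}}\,\|h_k^2\|_k$ and then the first clause to bound $\|h_k^2\|_k$, which produces a bound proportional to $d_{k-1,k}\,d_{k,n}$ at step $k$. Summed over $k$, this is $\sum_k d_{k-1,k}d_{k,n}$, not $2\sum_j d_{j,n}=\widehat d_n$, and the negative variance term cannot be used to close the gap since you have no \emph{lower} bound on it. In other words, your decomposition is precisely the one underlying Theorem~\ref{thmBound}, which is why the missing clause $\|q_{j,k}(f^2)\|_j\le c_{j,k}\|f\|_k^2$ appears there.

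The paper's proof avoids this by using a different variance representation due to Del Moral and Miclo,
\[
N\,\mathbb{E}\bigl[|\nu_n^N(f)-\mu_n(f)|^2\bigr]=\mathbb{E}\bigl[\nu_n^N(1)\nu_n^N(f^2)-\mu_n(f)^2\bigr]+\sum_{j=0}^{n-1}\mathbb{E}\bigl[U_{j,n}^N(f)\bigr],
\]
with $U_{j,n}^N(f)=\nu_j^N(1)\,\nu_j^N\bigl(q_{j,n}(f)^2\bigr)-\nu_j^N\bigl(q_{j,n}(f)\bigr)^2$. Here the square sits \emph{outside} $q_{j,n}$, so after subtracting and adding $\text{Var}_{\mu_j}(q_{j,n}(f))$ and applying Cauchy--Schwarz to the two product terms, one needs exactly $\|1\|_j\|q_{j,n}(f)^2\|_j+\|q_{j,n}(f)\|_j^2\le 2d_{j,n}\|f\|_n^2$, which is \eqref{djninequality} on the nose. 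The remark preceding the theorem about a ``different expression for the variance'' refers to this Del Moral--Miclo formula, not to an alternative way of handling the martingale increments.
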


The main difference between the crucial conditions (\ref{cjninequality}) and (\ref{djninequality}) in the two bounds is that (\ref{djninequality}) includes the case $j=k$. This implies that we need a constant which allows to bound $\|f^2\|_k$ against $\|f\|_k^2$. This is generally difficult since, unlike in the cases $j<k$, there are no transition kernels $K_l$ on the left hand side whose smoothing properties may be exploited. An important exception is the case where $\|\cdot\|_k$ is a supremum norm since in that case we have $\|f^2\|_k=\|f\|_k^2$. In the setting of Sequential MCMC on trees analyzed in Section \ref{seqSMCtrees} we indeed rely on supremum norms and thus obtain better constants from Theorem \ref{thmBound2}.

\begin{proof}[Proof of Theorem \ref{thmBound2}]
We can write the variance of $\nu_n^N(f)$ as follows:
\begin{equation}\label{VarRep}
\mathbb{E}[|\nu_n^N(f)-\mu_n(f)|^2]=\frac1N \mathbb{E}[\nu_n^N(1)\nu_n^N(f^2)-\mu_n(f)^2] +\frac1N \mathbb{E}\left[\sum_{j=0}^{n-1} U_{j,n}^N(f)\right],
\end{equation}
where 
\begin{equation}\label{UjnN}
U_{j,n}^N(f)=\nu_j^N(1)\nu_j^N(q_{j,n}(f)^2)-\nu_j^N(q_{j,n}(f))^2.
\end{equation}
This result goes back to Del Moral and Miclo \cite{DM00}, for a quick verification combine (10) and (11) in \cite{Schw12}. Now note that by the Cauchy-Schwarz inequality and since $\nu_j^N(\cdot)$ is an unbiased estimator for $\mu_j(\cdot)$, we have for any $g,h\in B(E_j)$
\begin{eqnarray}\label{abschfgc}
|\mathbb{E}[\nu_j^N(g)\nu_j^N(h)-\mu_j(g)\mu_j(h)]|\nonumber \\&\leq&
|\mu_j(g) \mathbb{E}[\nu_j^N(h)-\mu_j(h)]+\mu_j(h)\mathbb{E}[\nu_j^N(g)-\mu_j(g)]|\nonumber \\
&+&\mathbb{E}[|\nu_j^N(g)-\mu_j(g)||\nu_j^N(h)-\mu_j(h)|]
\nonumber \\&\leq&   \| g\|_j \| h\|_j\;\varepsilon_j^{N}.
\end{eqnarray}
Thus adding $\pm \text{\textnormal{Var}}_{\mu_j}(q_{j,n}(f))$ to the definition (\ref{UjnN}) of $U_{j,n}^N(f)$ and applying (\ref{abschfgc}) twice yields
\[
\mathbb{E}[U_{j,n}^N(f)]\leq \text{\textnormal{Var}}_{\mu_j}(q_{j,n}(f)) + S_{j,n}(f) \varepsilon_j^{N},
\]
where 
\[
S_{j,n}(f)=\|1\|_j \|q_{j,k}(f)^2\|_j + \|q_{j,k}(f)\|_j^2. 
\]
Applying (\ref{djninequality}) then yields for $0\leq j <n $ the estimate
\begin{equation}\label{UjnBd}
\mathbb{E}[U_{j,n}^N(f)]\leq \text{\textnormal{Var}}_{\mu_j}(q_{j,n}(f)) + 2\,d_{j,n} \| f\|_n^2 \varepsilon_j^{N}.
\end{equation}
A parallel argument yields
\[
\mathbb{E}[\nu_n^N(1)\nu_n^N(f^2)-\mu_n(f)^2]\leq \text{\textnormal{Var}}_{\mu_n}(f) + \widetilde{S}_{n,n}(f) \varepsilon_j^{N},
\]
where $\widetilde{S}_{n,n}(f)=\|1\|_n \|f^2\|_n$ and thus by (\ref{djninequality})
\begin{equation}\label{UjnBd2}
\mathbb{E}[\nu_n^N(1)\nu_n^N(f^2)-\mu_n(f)^2]\leq \text{\textnormal{Var}}_{\mu_n}(f) + d_{n,n} \| f\|_n^2 \varepsilon_j^{N}.
\end{equation}
With these observations we are prepared to bound the quadratic approximation error of $\nu_n^N(f)$: Bounding (\ref{VarRep}) using (\ref{UjnBd}) and (\ref{UjnBd2}) we obtain
\begin{eqnarray}
N \mathbb{E}[|\nu_n^N(f)-\mu_n(f)|^2]\leq \sum_{j=0}^n\text{Var}_{\mu_j}(q_{j,n}(f)) +2 \, \| f \|_{n}^2 
\sum_{j=0}^{n} d_{j,n}\;\varepsilon_j^{N}\nonumber.
\end{eqnarray}
Bounding $\varepsilon_j^{N}$ by $\overline{\varepsilon}_n^{N}$ and inserting the definition of $\widehat{d}_n$ shows (\ref{thmf1c2}). Optimizing (\ref{thmf1c2}) over $f$ with $\|f\|_n\leq 1$ and over $n$ yields
\[
N \overline{\varepsilon}_n^{N} \leq \overline{v}_n +\overline{d}_n \; \overline{\varepsilon}_n^{N}. 
\]
Choosing $N\geq 2\,\overline{d}_n$ and thus $N-\overline{d}_n \geq \frac{N}{2}$ gives (\ref{thmf2c2}).
\end{proof}

In both, Theorems \ref{thmBound} and \ref{thmBound2}, the coefficient of the leading term in the error bound corresponds to the asymptotic variance  in the central limit theorem for $\nu_n^N$ found in Del Moral and Miclo (\cite{DM00}, p. 45).

\section{Sequential MCMC on Trees}\label{seqSMCtrees}

In this section we study the ability of our Sequential MCMC algorithm to explore a multimodal state space by abstracting from the problem of mixing within modes: We consider the algorithm on a simple tree structure. We assume that our sequence of probability distributions $(\mu_k)_k$ lives on a sequence of state spaces $(I_k)_k$ where the states in $I_{k+1}$ have unique predecessors in $I_k$. Particle movements in the MCMC steps are restricted to moving from a state in $I_k$ to one of its successors in $I_{k+1}$.\bigskip

Section \ref{modeltrees} introduces the model including the notation for the tree structure. Section \ref{smctrees} states the algorithm and the error bounds for this setting. While the algorithm considered here should be viewed as a stylized version of Sequential MCMC which abstracts from problems of local mixing, it nevertheless fits into the framework of Section \ref{Preliminaries}. Section \ref{secSIStrees} introduces an alternative algorithm, Sequential Importance Sampling, which is based on weighting particles instead of resampling them. In Section \ref{anexample} we provide an extensive discussion of an elementary example where the error of Sequential MCMC grows polynomially in the number of levels $n$ while the error of Sequential Importance Sampling increases exponentially fast.

\subsection{The Model}\label{modeltrees}
Consider a sequence of probability distributions $\mu_0, \ldots, \mu_n$ on a sequence of finite state spaces $I_0,\ldots, I_n$. Assume that each $\mu_k$ gives positive mass to each point in its state space $I_k$. Denote by $B(I_{k})$ the bounded measurable functions from $I_k$ to $\mathbb{R}$. We define a tree structure on the sequence of state spaces by introducing for $k\in\{0,\ldots,n-1\}$ the predecessor function $p_k:I_{k+1}\cup\ldots\cup I_n \rightarrow I_k$ which maps $x\in I_l$ to its predecessor in $I_k$ for $l>k$. We assume transitivity of the functions $p_k$, i.e., for $j<k<l$ and $x\in I_l$ we assume that
\[
p_j(p_k(x))= p_j(x).
\]
Denote by $\mathcal{P}(I_k)$ the collection of subsets of $I_k$. Conversely to $p_k$, we define the successor function $s_k:I_{0}\cup\ldots\cup I_{k-1} \rightarrow \mathcal{P}(I_k)$ as follows: For $x \in I_l$ with $0\leq l<k\leq n$ the successors in $I_k$ of $x$ are given by
\[
s_k(x)=\{y \in I_k| p_l(y)=x\}. 
\]
We assume that no branches die out, i.e., for all $x \in I_{0}\cup\ldots\cup I_{n-1}$
\[
s_n(x) \neq \emptyset.
\]
With the additional assumption $|I_0|=1$ we would obtain a genuine tree structure yet this is not needed in the following. For a probability distribution $\mu$ on $I_l$, $l<k$, define the probability distribution $\mu^{\shortrightarrow k}$ on $I_k$ as the projection of $\mu$ to $I_k$: For $x\in I_k$,
\[
\mu^{\shortrightarrow k}(x)=\mu(s_l(x)).
\]

For $0\leq k< n$, denote by $g_{k,\,k+1}\in B(I_{k})$ an unnormalized relative density between $\mu_k$ and $\mu_{k+1}^{\shortrightarrow k}$: For all $f\in B(I_k)$
\[
\mu_{k+1}^{\shortrightarrow k}(f)=\frac{\mu_{k}(f g_{k,\,k+1})}{\mu_{k}(g_{k,\,k+1})}.
\]
Denote by $K_{k+1}:I_k \times I_{k+1} \rightarrow [0,1]$ a Markov transition kernel for which 
\[
\mu_{k+1}(f)=\mu_{k+1}^{\shortrightarrow k}(K_{k+1}(f))
\]
for all $f\in B(I_{k+1})$. Any pair of probability distributions $\mu_k$ and $\mu_{k+1}$ with full support on, respectively, $I_k$ and $I_{k+1}$ can be related through a pair $(g_{k,k+1}, K_{k+1})$. Moreover $K_{k+1}$ is unique and $g_{k,k+1}$ is unique up to a normalizing constant. For $x\in I_k$ and $y\in I_{k+1}$, $K_{k+1}$ is given explicitly by
\[
K_{k+1}(x,y)=\left\{\begin{array}{ll}
\frac{\mu_{k+1}(y)}{\mu_{k+1}(s_{k+1}(x))} & \text{ if } y\in s_{k+1}(x)\\
&\\
0 & \text{ otherwise.}
\end{array}\right.
\]

The tree structure, concretely, the fact that the states in $I_k$ are not connected by $K_k$, is a simple model of a multimodal state space: The elements of $I_k$ stand for components of a continuous state space which are separated by regions of very low probability. For the particle dynamics we consider subsequently, the consequence is that particles can move between different branches only through the resampling step but not through the mutation step. This is consistent with our aim of studying, how helpful the resampling step is in overcoming problems associated with multimodality.\bigskip

This model is a special case of the framework of Section \ref{tmvmodel}. The following lemma gives an explicit expression for $q_{j,k}(f)$.

\begin{lem}\label{qjkTrees}
For $0\leq j<k \leq n$, $f\in B(I_k)$ and $x\in I_j$ we have
\begin{equation}\label{qjkTrees1}
q_{j,k}(f)(x) =\frac{\mu_k\Big(f\, 1_{\{s_k(x)\}}\Big)}{\mu_j(x)}.
\end{equation}
In particular,
\begin{eqnarray}\label{qjkTrees2}
|q_{j,k}(f)(x)| &\leq& \left(\max_{y\in I_k} |f(y)|\right)  q_{j,k}(1)(x)\\
&\leq& \left(\max_{y\in I_k} |f(y)|\right) \left(\max_{z\in I_j} \frac{\mu_k^{\shortrightarrow j}(z)}{\mu_j(z)}\right).\nonumber\\\nonumber
\end{eqnarray}
\end{lem}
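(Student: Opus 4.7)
The plan is to prove the identity~(\ref{qjkTrees1}) by induction on the gap $k-j$, using the semigroup property $q_{j,k+1}(f)=q_{j,k}(q_{k,k+1}(f))$ recorded in Section~\ref{tmvmodel}; the bounds~(\ref{qjkTrees2}) will then follow by routine estimates on the right-hand side of~(\ref{qjkTrees1}).

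For the base case $k=j+1$, I will simply unwind the definition $q_{j,j+1}(f)(x)=g_{j,j+1}(x)K_{j+1}(x,f)/\mu_j(g_{j,j+1})$. The explicit form of the kernel $K_{j+1}$ gives $K_{j+1}(x,f)=\mu_{j+1}(f\,1_{s_{j+1}(x)})/\mu_{j+1}(s_{j+1}(x))$, and applying the defining relation of $g_{j,j+1}$ as an unnormalized relative density with test function $1_{\{x\}}$ identifies $\mu_{j+1}(s_{j+1}(x))=\mu_{j+1}^{\shortrightarrow j}(x)=\mu_j(x)g_{j,j+1}(x)/\mu_j(g_{j,j+1})$. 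Substituting cancels the factors involving $g_{j,j+1}$ and produces $\mu_{j+1}(f\,1_{s_{j+1}(x)})/\mu_j(x)$, as desired. For the inductive step, I apply the hypothesis to write
\[
q_{j,k+1}(f)(x)=q_{j,k}(q_{k,k+1}(f))(x)=\frac{1}{\mu_j(x)}\sum_{z\in s_k(x)}\mu_k(z)\,q_{k,k+1}(f)(z),
\]
then use the base-case formula for $q_{k,k+1}(f)(z)$ to collapse this to $\sum_{z\in s_k(x)}\mu_{k+1}(f\,1_{s_{k+1}(z)})/\mu_j(x)$. The geometric fact driving the argument is that $\{s_{k+1}(z):z\in s_k(x)\}$ is a disjoint partition of $s_{k+1}(x)$; this is exactly the transitivity $p_k(p_{k+1}(y))=p_k(y)$ together with the uniqueness of predecessors in $I_k$. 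The sum thus collapses to $\mu_{k+1}(f\,1_{s_{k+1}(x)})$, closing the induction.

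The two inequalities in~(\ref{qjkTrees2}) are then immediate from~(\ref{qjkTrees1}): pulling $\max_{y\in I_k}|f(y)|$ out of $\mu_k(|f|\,1_{s_k(x)})\le(\max|f|)\,\mu_k(s_k(x))$ and recognising $\mu_k(s_k(x))/\mu_j(x)=q_{j,k}(1)(x)$ gives the first bound; the second follows because $q_{j,k}(1)(x)=\mu_k^{\shortrightarrow j}(x)/\mu_j(x)$ is bounded pointwise by its maximum over $z\in I_j$. I do not anticipate any real obstacle; the only care needed is notational bookkeeping involving the projected measure $\mu_{k+1}^{\shortrightarrow k}$, the kernel $K_{k+1}$, and the relative density $g_{k,k+1}$, all of which enter the base-case calculation and must cancel cleanly.
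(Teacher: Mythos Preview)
Your induction is correct and the bookkeeping in the base case works out cleanly, so the proof is complete. However, the paper takes a genuinely different and shorter route that avoids induction altogether. The paper's key observation is the localisation property $q_{j,k}(f\,1_{\{s_k(y)\}})(x)=0$ for $y\neq x$ in $I_j$, which follows directly from the tree structure (the kernels $K_l$ never leave a branch). Combining this with linearity and the duality relation $\mu_j(q_{j,k}(g))=\mu_k(g)$ from Section~\ref{tmvmodel} yields
\[
\mu_k(f\,1_{\{s_k(x)\}})=\mu_j\big(q_{j,k}(f\,1_{\{s_k(x)\}})\big)=\mu_j(x)\,q_{j,k}(f)(x)
\]
in one line, with no need to unpack $K_{k+1}$ or $g_{k,k+1}$ explicitly. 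Your inductive argument has the merit of being entirely constructive and transparent about how the formula is built up step by step, which may be pedagogically clearer; the paper's argument is slicker because it exploits the adjoint relation $\mu_j\circ q_{j,k}=\mu_k$ to sidestep all the intermediate levels at once. Your treatment of the bounds~(\ref{qjkTrees2}) is essentially identical to the paper's.
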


\begin{proof}[Proof of Lemma \ref{qjkTrees}]
Observe that for $x\in I_j$ and $f \in B(I_k)$ we have for $x \neq y \in I_j$
\[
q_{j,k}(f\,1_{\{ s_k(y) \}})(x)=0.
\]
Thus we can write
\[
q_{j,k}(f)(x)=\sum_{y\in I_j}q_{j,k}\Big(f1_{\{ s_k(y) \}}\Big)(x) =q_{j,k}\Big(f\,1_{\{ s_k(x) \}}\Big)(x),
\]
since $q_{j,k}(f)$ is linear in $f$. Therefore we have
\begin{eqnarray}
 \mu_k\Big(f 1_{\{s_k(x)\}}\Big)=\mu_j\Big(q_{j,k}\Big(f 1_{\{s_k(x)\}}\Big)\Big)= \mu_j(x) q_{j,k}\Big(f 1_{\{s_k(x)\}}\Big)(x)=\mu_j(x)q_{j,k}(f)(x),\nonumber
\end{eqnarray}
which can be rearranged into (\ref{qjkTrees1}). (\ref{qjkTrees2}) follows from 
\[
|q_{j,k}\Big(f\,1_{\{ s_k(x) \}}\Big)(x)| \leq \left(\max_{y\in I_k} |f(y)|\right) q_{j,k}\Big(1_{\{ s_k(x) \}}\Big)(x)=\left(\max_{y\in I_k} |f(y)|\right) \frac{\mu_k(s_k(x))}{\mu_j(x)}
\]
and the definition of $\mu_k^{\shortrightarrow j}$.
\end{proof}

\subsection{Sequential MCMC}\label{smctrees}\label{errorbdsTrees}

We next apply to our model the error bounds of Theorem \ref{thmBound2}. To achieve this we need to define a series of norms $\|\cdot\|_j$ on $B(I_j)$ and find constants $d_{j,k}$ such that the inequality 
\begin{equation}\label{djkinequality}
\max\Big(\|1\|_j\|q_{j,k}(f)^2\|_j,\|q_{j,k}(f)\|_j^2\Big)  \leq d_{j,k}\;\|f\|_k^2
\end{equation}
is satisfied. We choose $\|\cdot\|_j$ to be the maximum-norm on $B(I_j)$, i.e., for $f\in B(I_j)$
\[
\|f\|_j=\max_{x\in I_j} |f(x)|.
\]
The following proposition gives a choice of constants $d_{j,k}$ which guarantee that (\ref{djkinequality}) is satisfied and shows that these constants can also be used to bound the remaining quantities arising in the error bound of Theorem \ref{thmBound2}:

\begin{prop}\label{propDJK}
For all $j<k$, inequality (\ref{djkinequality}) is satified for the constants
\[
d_{j,k}=\left(
\max_{x\in I_j} \frac{\mu_k^{\shortrightarrow j}(x)}{\mu_j(x)} 
\right)^2.
\]
Moreover for all $f\in B(E_k)$
\begin{equation}\label{varmungrob}
\text{\textnormal{Var}}_{\mu_j}(q_{j,k}(f))\leq \sqrt{d_{j,k}}\,\|f\|_k^2,
\end{equation}
as well as
\[
\widehat{v}_{k}\leq \sum_{j=0}^k \sqrt{d_{j,k}} \text{ and } \overline{v}_{k}\leq \max_{i \leq k}\sum_{j=0}^i \sqrt{d_{j,i}}.
\]
\end{prop}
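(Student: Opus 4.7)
The plan is to derive everything from the pointwise estimate (\ref{qjkTrees2}) of Lemma \ref{qjkTrees}, exploiting the fact that $\|\cdot\|_j$ is the sup-norm on the finite set $I_j$. First I would verify (\ref{djkinequality}): since $\|1\|_j=1$ and the sup-norm is multiplicative, both terms on the left of (\ref{djkinequality}) equal $\|q_{j,k}(f)\|_j^2$, and (\ref{qjkTrees2}) directly gives
\[
\|q_{j,k}(f)\|_j \leq \|f\|_k \max_{z\in I_j} \frac{\mu_k^{\shortrightarrow j}(z)}{\mu_j(z)} = \|f\|_k \sqrt{d_{j,k}},
\]
so squaring yields the claim.

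The central step is the sharper bound (\ref{varmungrob}) on the variance, which only involves $\sqrt{d_{j,k}}$ rather than $d_{j,k}$. I would start from $\text{Var}_{\mu_j}(q_{j,k}(f)) \leq \mu_j(q_{j,k}(f)^2)$, use the pointwise bound $|q_{j,k}(f)(x)|\leq \|f\|_k\, q_{j,k}(1)(x)$ from (\ref{qjkTrees2}), and then invoke the explicit formula $q_{j,k}(1)(x) = \mu_k^{\shortrightarrow j}(x)/\mu_j(x)$ coming from (\ref{qjkTrees1}) applied with $f\equiv 1$. The key computation is then
\[
\mu_j\bigl(q_{j,k}(1)^2\bigr) = \sum_{x\in I_j} \mu_j(x)\left(\frac{\mu_k^{\shortrightarrow j}(x)}{\mu_j(x)}\right)^2 = \sum_{x\in I_j} \mu_k^{\shortrightarrow j}(x)\cdot \frac{\mu_k^{\shortrightarrow j}(x)}{\mu_j(x)} \leq \sqrt{d_{j,k}},
\]
where the final inequality pulls the maximum of $\mu_k^{\shortrightarrow j}(x)/\mu_j(x)$ out of the sum and uses that $\mu_k^{\shortrightarrow j}$ is a probability measure. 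This is the step I expect to be the only nontrivial point of the argument: one must resist the temptation to apply the crude bound $q_{j,k}(1)^2 \leq d_{j,k}$ before integrating, and instead retain one factor of $\mu_k^{\shortrightarrow j}$ as a probability weight so that a single maximum (rather than two) survives. For the boundary case $j=k$, note $\mu_k^{\shortrightarrow k}=\mu_k$ so $d_{k,k}=1$, and $\text{Var}_{\mu_k}(f)\leq \mu_k(f^2)\leq \|f\|_k^2=\sqrt{d_{k,k}}\|f\|_k^2$, so (\ref{varmungrob}) extends to $j=k$ as well.

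Finally, the bounds on $\widehat{v}_k$ and $\overline{v}_k$ follow immediately by summing (\ref{varmungrob}) over $j=0,\ldots,k$ and taking the supremum over $\|f\|_k\leq 1$ in the definition of $\widehat{v}_k$, respectively taking the maximum over $i\leq k$ in the definition of $\overline{v}_k$. No further structural argument is required.
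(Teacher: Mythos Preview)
Your proof is correct and follows essentially the same route as the paper: both verify (\ref{djkinequality}) via $\|q_{j,k}(f)\|_j\leq \|f\|_k\,\|q_{j,k}(1)\|_j$ from Lemma \ref{qjkTrees}, and both obtain the variance bound (\ref{varmungrob}) by keeping one factor of $q_{j,k}(1)$ under the integral so that it sums to $1$ while the other contributes the single factor $\sqrt{d_{j,k}}$. The only cosmetic difference is that the paper writes this step as $\mu_j(q_{j,k}(f)^2)\leq \|f\|_k^2\,\|q_{j,k}(1)\|_j\,\mu_j(q_{j,k}(1))=\|f\|_k^2\sqrt{d_{j,k}}$ using the propagator identity $\mu_j(q_{j,k}(1))=1$, whereas you expand the sum explicitly via $\sum_x \mu_k^{\shortrightarrow j}(x)=1$; your added remark on the boundary case $j=k$ is a useful clarification the paper leaves implicit.
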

\begin{proof}[Proof of Proposition \ref{propDJK}]
Observe that we have $\|f^2\|_j=\|f\|_j^2$, $\|1\|_j=1$ and by Lemma \ref{qjkTrees}
\[
\|q_{j,k}(f)\|_j \leq \|q_{j,k}(1)\|_j \|f\|_n.
\]
Moreover by the same lemma we have 
\begin{equation}\label{greins}
\|q_{j,k}(1)\|_j=\max_{x\in I_j} \frac{\mu_k^{\shortrightarrow j}(x)}{\mu_j(x)}.
\end{equation}
Thus (\ref{djkinequality}) is satisfied for the constants $d_{j,k}$ we defined.

For the bound on $\text{\textnormal{Var}}_{\mu_j}(q_{j,k}(f))$ note that
\[
\text{\textnormal{Var}}_{\mu_j}(q_{j,k}(f))\leq \mu_j(q_{j,k}(f)^2)\leq \|f\|_k^2\; \|q_{j,k}(1)\|_k\; \mu_j(q_{j,k}(1))\leq \sqrt{d_{j,k}}\,\|f\|_k^2.
\]
This immediately implies the bounds on $\widehat{v}_{k}$ and $\overline{v}_{k}$.
\end{proof}

In order to apply the error bound of Theorem \ref{thmBound2} it is sufficient to control the constants $d_{j,k}$ defined in the proposition. $d_{j,k}$ is large when a node at level $j$, which carries little mass, has offspring at level $k$, which (in sum) carries considerably more probability mass. Notably, the constant $d_{j,k}$ does not take into account any further branching of the state space which occurs at levels $j+1,\ldots,n$. We next set these bounds into perspective by deriving a lower bound on the asymptotic variance 
\[
\text{\textnormal{Var}}^{\text{as}}_k (f) =\sum_{j=0}^k \text{\textnormal{Var}}_{\mu_j}(q_{j,k}(f))
\]
for the test function $f\equiv 1 \in B(I_k)$.

\begin{prop}\label{assvartest}
\[
\text{\textnormal{Var}}^{\text{as}}_k (1) = \sum_{j=0}^k \sum_{x\in I_j} \mu_k^{\shortrightarrow j} (x) \left(\frac{\mu_k^{\shortrightarrow j} (x)}{\mu_j(x)}  -1\right)= \sum_{j=0}^k\mu_k^{\shortrightarrow j} (q_{j,k}(1)-1).
\]
\end{prop}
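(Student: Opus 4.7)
The plan is to compute the summand $\text{Var}_{\mu_j}(q_{j,k}(1))$ explicitly for each $j$ and then recognize the result as the two equivalent expressions in the statement. Since $f \equiv 1$, the key observation is that by Lemma \ref{qjkTrees} we have the closed form
\[
q_{j,k}(1)(x) = \frac{\mu_k(s_k(x))}{\mu_j(x)} = \frac{\mu_k^{\shortrightarrow j}(x)}{\mu_j(x)} \quad \text{for } x \in I_j,
\]
using the definition of $\mu_k^{\shortrightarrow j}$. This is the only non-trivial input; the rest is algebraic rearrangement.

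First I would use the intertwining $\mu_j(q_{j,k}(f)) = \mu_k(f)$ with $f\equiv 1$ to obtain $\mu_j(q_{j,k}(1)) = 1$, so that
\[
\text{Var}_{\mu_j}(q_{j,k}(1)) = \mu_j(q_{j,k}(1)^2) - 1.
\]
Next I would substitute the explicit formula for $q_{j,k}(1)$ and compute
\[
\mu_j(q_{j,k}(1)^2) = \sum_{x \in I_j} \mu_j(x) \left(\frac{\mu_k^{\shortrightarrow j}(x)}{\mu_j(x)}\right)^2 = \sum_{x \in I_j} \frac{\mu_k^{\shortrightarrow j}(x)^2}{\mu_j(x)}.
\]
Using $\sum_{x \in I_j} \mu_k^{\shortrightarrow j}(x) = 1$ to absorb the $-1$ back into the sum yields
\[
\text{Var}_{\mu_j}(q_{j,k}(1)) = \sum_{x\in I_j} \mu_k^{\shortrightarrow j}(x) \left(\frac{\mu_k^{\shortrightarrow j}(x)}{\mu_j(x)} - 1\right),
\]
which after summing over $j$ gives the first equality in the statement.

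For the second equality I would simply unfold the integral: by the very definition of $\mu_k^{\shortrightarrow j}$ as a measure on $I_j$,
\[
\mu_k^{\shortrightarrow j}(q_{j,k}(1) - 1) = \sum_{x \in I_j} \mu_k^{\shortrightarrow j}(x)(q_{j,k}(1)(x) - 1),
\]
and substituting the explicit form of $q_{j,k}(1)(x)$ again recovers the middle expression. Summing over $j$ finishes the proof. There is no real obstacle here; the only thing to be careful about is matching the formula from Lemma \ref{qjkTrees} with the projection notation $\mu_k^{\shortrightarrow j}$, and ensuring one remembers that $\mu_k^{\shortrightarrow j}$ is a probability measure on $I_j$ so that the ``$-1$'' term integrates to $-1$.
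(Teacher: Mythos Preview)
Your proof is correct and follows essentially the same route as the paper's: both invoke Lemma \ref{qjkTrees} to write $q_{j,k}(1)(x)=\mu_k^{\shortrightarrow j}(x)/\mu_j(x)$, use $\mu_j(q_{j,k}(1))=1$ to reduce the variance to $\mu_j(q_{j,k}(1)^2)-1$, and then absorb the $-1$ via $\sum_{x\in I_j}\mu_k^{\shortrightarrow j}(x)=1$. The only cosmetic difference is that the paper writes the second equality directly as a one-line identification rather than unfolding the integral explicitly.
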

\begin{proof}[Proof of Proposition \ref{assvartest}]
Observe that
\[
\text{\textnormal{Var}}_{\mu_j}(q_{j,k}(1))= \left[
\sum_{x\in I_j} \mu_{j}(x)\left(q_{j,k}(1)(x)\right)^2
\right]-\mu_j(q_{j,k}(1))^2.
\]
By Lemma \ref{qjkTrees} we have
\[
\left(q_{j,k}(1)(x)\right)^2=\left(\frac{\mu_k^{\shortrightarrow j}(x)}{\mu_j(x)}\right)^2.
\]
By the fact that
\[
\mu_j(q_{j,k}(1))^2=\mu_k(1)^2=1=\sum_{x\in I_j} \mu_k^{\shortrightarrow j} (x),
\]
we can thus write
\[
\text{\textnormal{Var}}_{\mu_j}(q_{j,k}(1))=
 \sum_{x\in I_j} \mu_k^{\shortrightarrow j}(x) \left( \frac{\mu_k^{\shortrightarrow j}(x)}{\mu_j(x)}-1\right)=\mu_k^{\shortrightarrow j}(q_{j,k}(1)-1).
\]
Summing over $j$ completes the proof.
\end{proof}

Denote the expression for $\text{\textnormal{Var}}_{\mu_j}(q_{j,k}(1))$ from the proposition by $v_{j,k}$, i.e.,
\[
v_{j,k}= \sum_{x\in I_j} \mu_k^{\shortrightarrow j}(x) \left( \frac{\mu_k^{\shortrightarrow j}(x)}{\mu_j(x)}-1\right). 
\]
  
$d_{j,k}$ may be large even when $v_{j,k}$ is small: $d_{j,k}$ is large if the successors at level $k$ of $x\in I_j$ are  -- relatively -- much more important under $\mu_k$ than $x$ is under $\mu_j$. In this case $v_{j,k}$ may still be small if the absolute importance of the successors of $x$ is small under $\mu_k$. In short, $v_{j,k}$ may be much smaller than $d_{j,k}$ if the largest (relative) gains in importance are made by regions of the state space that remain (absolutely) unimportant.\bigskip

As a by-product, note that from the proof of Proposition \ref{assvartest} we immediately get an upper bound on $\text{\textnormal{Var}}_{\mu_j}(q_{j,k}(f))$ which is sharper than (\ref{varmungrob}): 
\[
\text{\textnormal{Var}}_{\mu_j}(q_{j,k}(f))\leq  \mu_j(q_{j,k}(1)^2) \, \|f\|_k^2 =\widetilde{d}_{j,k} \|f\|_k^2, 
\]
where $\widetilde{d}_{j,k}$ is defined as
\[
\widetilde{d}_{j,k}
=
\sum_{x\in I_j}\frac{\mu_k^{\shortrightarrow j}(x)^2}{\mu_j(x)}
=\mu_k^{\shortrightarrow j} (q_{j,k}(1)).
\]
We obtain corresponding sharper upper bounds on $\widehat{v}_k$ and $\overline{v}_k$. This allows to bound the leading term in the error bounds of Theorem \ref{thmBound2} using $\widetilde{d}_{j,k}$ in place of $\sqrt{d_{j,k}}$.

\subsection{Sequential Importance Sampling}\label{secSIStrees} 

For the purpose of comparison, we next introduce a Sequential Importance Sampling algorithm for the tree model and give an explicit expression of its approximation error for a class of test functions.\bigskip

In Sequential Importance Sampling, particles are moved independently according to the kernels $K_k$. Afterwards, importance weights $\omega$ are calculated for the particles which allow to obtain an estimator for $\mu_n$ through a weighted empirical measure of the particles. In the present framework, Sequential Importance Sampling is equivalent to simple Importance Sampling between the probability distribution $\pi_n$ on $I_n$ given by
\[
\pi_n=\mu_0 K_{1}\ldots K_{n}
\]
and $\mu_n$. For simplicity, we consider only unnormalized Importance Sampling, i.e., we assume that we can calculate the weights exactly (and not only up to a normalizing constant). This has the advantage that we do not have to consider a bias introduced by normalizing the particle weights through their sum. Otherwise, the algorithm corresponds to, e.g., the Annealed Importance Sampling algorithm of Neal \cite{N01}.\bigskip

Instead of a system of particles, it is thus sufficient to consider only the vector of particles $(\widetilde{\xi}_n^i)_{1\leq i \leq N}$ which are distributed independently according to $\pi_n$. We define the importance weight function $\omega_n\in B(I_n)$ by
\[
\omega_n(x)=\frac{\mu_n(x)}{\pi_n(x)}, \text{ for all } x\in I_n 
\]
Then for $f\in B(I_n)$ the Sequential Importance Sampling estimator $\widetilde{\eta}_n(f)$ is given by
\[
\widetilde{\eta}_n(f)=\frac{1}{N}\sum_{i=1}^N f\left(\widetilde{\xi}_n^i\right) \omega_n\left(\widetilde{\xi}_n^i\right).
\]
$\widetilde{\eta}_n(f)$ is an unbiased estimator for $\mu_n(f)$, i.e.,
\[
\mathbb{E}[\widetilde{\eta}_n(f)]=\mu_n(f).
\]

We next calculate a formula for the quadratic approximation error for test functions of the form $f=1_{\{x\}}$.

\begin{lem}\label{SISerror}
For $x\in I_n$ and $f=1_{\{x\}}$ we have
\[
\mathbb{E}[|\widetilde{\eta}_n(f)-\mu_n(f)|^2]=\frac{\mu_n(x)^2}{N}\left(\frac{1}{\pi_n(x)}-1  \right).
\]
\end{lem}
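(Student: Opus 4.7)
The plan is to reduce the computation to a single-variable variance calculation by exploiting the independence of the samples $\widetilde{\xi}_n^i$ under $\pi_n$, and then evaluate the resulting moments directly using the special structure of the indicator test function $f = 1_{\{x\}}$.

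First I would note that since $\widetilde{\eta}_n(f)$ is unbiased, the mean squared error equals $\mathrm{Var}(\widetilde{\eta}_n(f))$. Because $\widetilde{\xi}_n^1, \ldots, \widetilde{\xi}_n^N$ are i.i.d.\ with law $\pi_n$, the variance of the empirical average factors as
\[
\mathbb{E}\bigl[|\widetilde{\eta}_n(f) - \mu_n(f)|^2\bigr] = \frac{1}{N}\,\mathrm{Var}_{\pi_n}(f\,\omega_n).
\]

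Next I would compute the two moments of $f\,\omega_n$ under $\pi_n$ in the special case $f = 1_{\{x\}}$. For any $y \in I_n$, $f(y)\omega_n(y) = 1_{\{y=x\}}\,\mu_n(x)/\pi_n(x)$. Therefore
\[
\mathbb{E}_{\pi_n}[f\,\omega_n] = \pi_n(x)\,\frac{\mu_n(x)}{\pi_n(x)} = \mu_n(x),
\]
which recovers unbiasedness, and
\[
\mathbb{E}_{\pi_n}[(f\,\omega_n)^2] = \pi_n(x)\,\frac{\mu_n(x)^2}{\pi_n(x)^2} = \frac{\mu_n(x)^2}{\pi_n(x)}.
\]
Subtracting the square of the mean from the second moment yields
\[
\mathrm{Var}_{\pi_n}(f\,\omega_n) = \frac{\mu_n(x)^2}{\pi_n(x)} - \mu_n(x)^2 = \mu_n(x)^2\left(\frac{1}{\pi_n(x)} - 1\right),
\]
and dividing by $N$ gives the claimed formula.

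There is no real obstacle here: the result is essentially an elementary variance computation for an importance sampling estimator, and the indicator structure of $f$ collapses both moments to a single term. The only points to mention for completeness are that $\pi_n(x) > 0$ (which follows from the full-support assumption on the $\mu_k$ combined with the explicit form of $K_k$), ensuring $\omega_n(x)$ is well-defined, and that unbiasedness is inherited from $\widetilde{\eta}_n$ being an unnormalized importance sampling estimator.
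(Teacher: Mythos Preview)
Your proof is correct and follows essentially the same approach as the paper: both reduce the mean squared error to a single-particle variance via unbiasedness and the i.i.d.\ structure, then evaluate that variance explicitly for the indicator $f=1_{\{x\}}$. The only cosmetic difference is that the paper computes $\mathbb{E}[(f\omega_n - \mu_n(x))^2]$ by splitting into the two cases $\widetilde{\xi}_n^i = x$ and $\widetilde{\xi}_n^i \neq x$, whereas you compute $\mathbb{E}[(f\omega_n)^2] - \mu_n(x)^2$; these are of course the same computation.
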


\begin{proof}[Proof of Lemma \ref{SISerror}]
To prove the lemma we only need the following calculation based on the unbiasedness of $\widetilde{\eta}_n(f)$:
\begin{eqnarray}
\mathbb{E}[|\widetilde{\eta}_n(f)-\mu_n(f)|^2]&=&\mathbb{E}\left[\left(\left(
\frac1N \sum_{i=1}^N\frac{\mu_n(x)}{\pi_n(x)}1_{\{x \}}\left(\widetilde{\xi}_n^i\right)
\right)
-\mu_n(x)
\right)^2
\right]\nonumber\\
&=& \frac{1}{N^2}\sum_{i=1}^N \mathbb{E}\left[\left(
\frac{\mu_n(x)}{\pi_n(x)}1_{\{x \}}\left(\widetilde{\xi}_n^i\right) - \mu_n(x)
\right)^2
\right]\nonumber\\
&=& \frac1N\left(
\pi_n(x)\left(\frac{\mu_n(x)}{\pi_n(x)} -\mu_n(x) \right)^2+(1-\pi_n(x))\mu_n(x)^2
\right)\nonumber\\
&=&\frac{\mu_n(x)^2}{N}\left(\frac{1}{\pi_n(x)}-1\right).
\nonumber
\end{eqnarray}
\end{proof}

We thus see that Sequential Importance Sampling can only perform well if the distribution $\pi_n$ is sufficiently close to $\mu_n$, more precisely, if no state which is unimportant under $\pi_n$ is important under $\mu_n$.

\subsection{Example: Weighting or Resampling?}\label{anexample}

We now apply the error bounds we just developed to a concrete example depicted in Figure \ref{bildbaum}. Our aim is to show that in this case Sequential MCMC, notably, its Resampling step, succeeds in a multimodal setting in which Sequential Importance Sampling severely suffers from weight degeneracy. Section \ref{treemodel} introduces the setting of the example. Section \ref{treesemigroup} derives upper bounds on $q_{j,k}(1)$. Sections \ref{errSMCMCtreeEx} and \ref{wdSIS} contain the error analysis for, respectively, Sequential MCMC and Sequential Importance Sampling. Section \ref{moreexamples} closes our comparison of Sequential MCMC and Sequential Importance Sampling by discussing some further examples.

\subsubsection{The Model}\label{treemodel}

We consider the sequence of state spaces $I_0,\ldots, I_n$ given by
\[
I_k=\{0_k,\ldots, k_k \}.
\]
Thus the elements of $I_k$ are the natural numbers from $0$ to $k$, indexed by $k$ in order to keep the notation clearer. 
\begin{figure}
\begin{center}
\includegraphics[height=250pt]{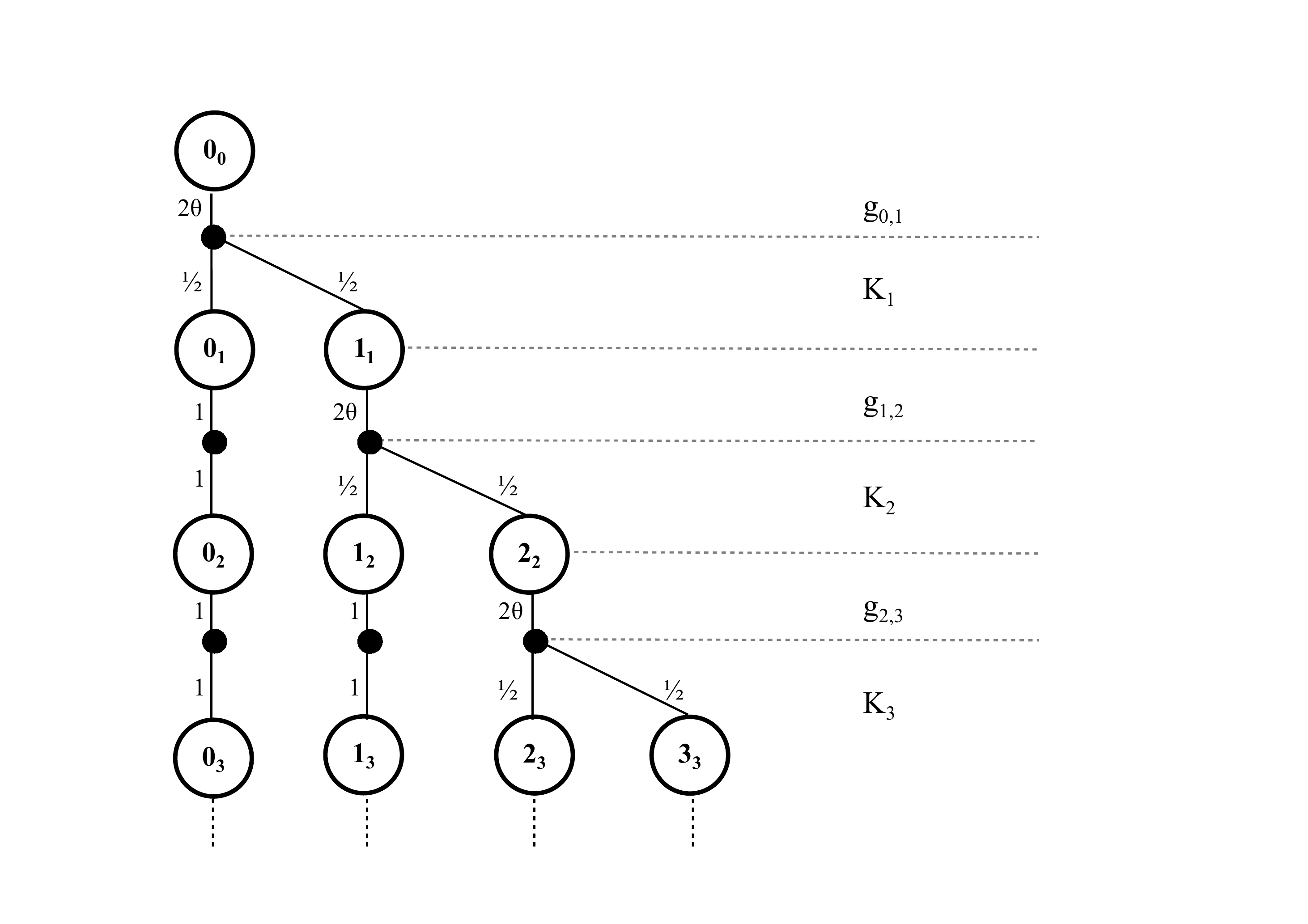}
\end{center}
\caption{Weighting or Resampling?}
\label{bildbaum}
\end{figure}
For $l>k$, the predecessor in $I_k$ of $j_l\in I_l$ is given by $j_k$ if $j\leq k$, otherwise it is $k_k$:
\[
p_k(j_l)=\left\{
\begin{array}{ll}
j_k,& \text{ if } j \leq k \\
k_k, & \text{ if } j> k. \\
\end{array}
\right.
\]
We thus have a simple tree structure where from level $k$ to level $k+1$ the ``largest'' node $k_k$ has two successors, $k_{k+1}$ and $(k+1)_{k+1}$, while all other nodes $j_k$ have only one successor $j_{k+1}$. Accordingly, for $l>k$ and $j_k \in I_k$, the successor function is given by
\[
s_l(j_k)=\left\{
\begin{array}{ll}
\{j_l\},& \text{ if } j< k\\
\{k_l, \ldots, l_l\}, & \text{ if } j= k.\\
\end{array}
\right.
\]

We define the sequence $\mu_0,\ldots, \mu_n$ implicitly through $g_{k,k+1}$ and $K_{k+1}$. We choose the weight function $g_{k,k+1}\in B(I_k)$ such that only the mass of $k_k$ is modified while the relative masses of the other nodes remain the same:
\[
g_{k,k+1}(j_k)=\left\{
\begin{array}{ll}
1,& \text{ if } j< k\\
2 \theta & \text{with $\theta >0$, if } j= k.\\
\end{array}
\right.
\]
The transition kernel $K_{k+1}:I_k\times I_{k+1}\rightarrow [0,1]$  is chosen such that $K_{k+1}(j_k,\cdot)$ is the uniform distribution on the successors of $j_k$:
\[
K_{k+1}(j_k,i_{k+1})=\left\{
\begin{array}{ll}
1& \text{ if } i=j< k\\
\frac{1}{2}  & \text{ if } j= k \text{ and }i\in\{k,k+1\}\\
0  & \text{ otherwise.}\\
\end{array}
\right.
\]
Observe that for $\theta >\frac{1}{2}$ we have two countervailing effects: One from the kernels $K_k$ and one from the functions $g_{k,k+1}$. On the one hand, the kernels $K_k$ favor that mass is concentrated on $j_k$ with small $j$. If we had a constant function $g_{k,k+1}$ (i.e. $\theta=\frac12$), $\mu_k$ would be a geometric distribution with parameter $\frac12$ and maximum in $0_k$ . On the other hand, the weight functions $g_{k,k+1}$ move mass to the largest node $k_k$. As becomes clear from the explicit formula for $\mu_k$ calculated next, the case of  $\theta > 1$ which we mainly consider is the case where the second effect is sufficiently strong in the sense that $\mu_k(k_k)>\mu_k(j_k)$ for $j<k-1$. As $\theta$ approaches $1$, $\mu_k$ converges to the uniform distribution on $I_k$. The cases where $\theta < 1$ are largely omitted in our error bounds, not because they are more difficult, but because they are less interesting and would need a largely separate analysis.

\begin{cor}\label{muktrees}
For $j_k\in I_k$ we have
\[
\mu_k(j_k)=\left\{
\begin{array}{ll}
\frac{\theta^{j+1}}{Z_k},& \text{ if } j< k\\&\\
\frac{\theta^{k}}{Z_k},& \text{ if } j= k,\\
\end{array}
\right.
\]
where the normalizing constant $Z_k$ is given by
\begin{eqnarray}
Z_k&=& \theta^k +\sum_{j=0}^{k-1}\theta^{j+1}\label{Zk1}.
\end{eqnarray}
Moreover for $\theta \neq 1$,
\begin{eqnarray}
Z_k&=& \theta^k+\frac{\theta}{\theta-1}(\theta^k-1).\label{Zk2}
\end{eqnarray}
\end{cor}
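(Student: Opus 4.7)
I will prove Corollary \ref{muktrees} by induction on $k$, using the propagation identity $\mu_{k+1}(f) = \mu_{k+1}^{\shortrightarrow k}(K_{k+1}(f))$ from Section \ref{modeltrees}, together with the definitions of $g_{k,k+1}$ and $K_{k+1}$ in the example. The base case $k = 0$ is trivial: $I_0 = \{0_0\}$ forces $\mu_0(0_0) = 1$, and the proposed formula gives $\theta^0/Z_0 = 1/1 = 1$.

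\medskip

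For the inductive step, assume the stated formula holds at level $k$ with normaliser $Z_k$. The plan is first to compute the normalising constant $\mu_k(g_{k,k+1})$ appearing in the definition of $\mu_{k+1}^{\shortrightarrow k}$. Splitting the sum into $j < k$ and $j = k$, one obtains
\[
\mu_k(g_{k,k+1}) = \frac{1}{Z_k}\Bigl(\sum_{j=0}^{k-1}\theta^{j+1} + 2\theta \cdot \theta^k\Bigr)
= \frac{Z_k - \theta^k + 2\theta^{k+1}}{Z_k}.
\]
The key observation is that the numerator equals $Z_{k+1}$: indeed, $Z_{k+1} = \theta^{k+1} + \sum_{j=0}^{k}\theta^{j+1} = \theta^{k+1} + (Z_k - \theta^k) + \theta^{k+1}$, so $\mu_k(g_{k,k+1}) = Z_{k+1}/Z_k$. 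This telescoping identity is the engine of the whole argument.

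\medskip

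With this, the projected distribution is $\mu_{k+1}^{\shortrightarrow k}(j_k) = \mu_k(j_k)\, g_{k,k+1}(j_k) \cdot Z_k/Z_{k+1}$, giving $\theta^{j+1}/Z_{k+1}$ for $j<k$ and $2\theta^{k+1}/Z_{k+1}$ for $j=k$. Now I apply the kernel $K_{k+1}$ case-by-case to recover $\mu_{k+1}(i_{k+1})$: for $i < k$ only the predecessor $i_k$ contributes (with weight one), giving $\theta^{i+1}/Z_{k+1}$; for $i \in \{k, k+1\}$ only $k_k$ contributes (with weight $1/2$), giving $\theta^{k+1}/Z_{k+1}$ in both cases. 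These values are precisely the claimed formulas at level $k+1$, completing the induction.

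\medskip

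Finally, the closed form (\ref{Zk2}) for $\theta \neq 1$ follows immediately from the geometric series identity $\sum_{j=0}^{k-1}\theta^{j+1} = \theta(\theta^k - 1)/(\theta - 1)$ applied to (\ref{Zk1}). No real obstacle is expected here; the only point requiring care is verifying that $k_k$ and $(k+1)_{k+1}$ both inherit the correct mass $\theta^{k+1}/Z_{k+1}$ from the factor-of-two in $g_{k,k+1}$ cancelling the factor-of-$\tfrac12$ in $K_{k+1}$, which is the mechanism ensuring the pattern ``$\theta^{j+1}$ for interior $j$, $\theta^k$ for the extremal $j = k$'' is preserved at each level.
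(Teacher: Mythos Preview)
Your proof is correct and is exactly the computation the paper has in mind; the paper itself dispenses with the argument in a single sentence (``an immediate consequence of our choices of $g_{k,k+1}$ and $K_{k+1}$''), and your induction simply spells out that consequence in full detail, including the telescoping identity $\mu_k(g_{k,k+1})=Z_{k+1}/Z_k$ that makes everything work.
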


The corollary is an immediate consequence of our choices of $g_{k,k+1}$ and $K_{k+1}$. Thus for $\theta > 1$,  $\mu_k$ can be characterized as follows: It is a geometric distribution with maximum in $(k-1)_k$ on $0_k,\ldots, (k-1)_k$. Additionally we have $\mu_k((k-1)_k)=\mu_k(k_k)$.

\subsubsection{Controlling $q_{j,k}$}\label{treesemigroup}

From here on we mostly focus on the case $\theta \geq 1$. In order to apply the error bounds of Section \ref{errorbdsTrees}  we have to study the expressions $q_{j,k}(1)$ for this example. This is begun in the following lemma.

\begin{lem}\label{qjktreeex}
For $0\leq k <l \leq n$, we have
\[
q_{k,l}(1)(j_k)=\left\{
\begin{array}{ll}
\frac{Z_k}{Z_l}& \text{ if } j< k\\&\\
\frac{Z_k Z_{l-k}}{Z_l}& \text{ if } j= k.\\
\end{array}
\right.
\]
Furthermore for $\theta\geq 1$,
\[
\max_{j_k\in I_k} q_{k,l}(1)(j_k) = \frac{Z_k Z_{l-k}}{Z_l}.
\]
\end{lem}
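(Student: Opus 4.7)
The plan is straightforward: apply Lemma \ref{qjkTrees} with $f\equiv 1$, which expresses
\[
q_{k,l}(1)(j_k) = \frac{\mu_l(s_l(j_k))}{\mu_k(j_k)},
\]
and then evaluate this ratio on the two types of nodes using the explicit formulas for $\mu_k$ from Corollary \ref{muktrees} together with the explicit description of the successor function $s_l$ from Section \ref{treemodel}.

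For $j<k$, the successor set $s_l(j_k)$ is the singleton $\{j_l\}$, so both numerator and denominator take the ``interior'' form $\theta^{j+1}/Z_\cdot$; the factors $\theta^{j+1}$ cancel, and only the normalizing constants remain, yielding the claimed ratio $Z_k/Z_l$. For $j=k$, the successor set is the entire ``tail'' $\{k_l,(k+1)_l,\ldots,l_l\}$ of $I_l$, so I need to sum the $\mu_l$-masses of all these states, while the denominator is simply $\mu_k(k_k)=\theta^k/Z_k$.

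The main obstacle, though still routine, is the algebraic identification needed for the case $j=k$: after writing out $\mu_l(s_l(k_k))$, one is left with the sum
\[
\sum_{i=k}^{l-1}\theta^{i+1}+\theta^l = \theta^k\Big(\sum_{m=1}^{l-k}\theta^m+\theta^{l-k}\Big) = \theta^k\,Z_{l-k},
\]
where the first equality factors out $\theta^k$ (shifting the index by $m=i+1-k$) and the second is the definition (\ref{Zk1}) of $Z_{l-k}$. Once this identity is in place, dividing by $\mu_k(k_k)=\theta^k/Z_k$ produces the advertised value $Z_k Z_{l-k}/Z_l$.

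For the maximum statement, it suffices to observe that the ``boundary'' value $Z_k Z_{l-k}/Z_l$ dominates the common ``interior'' value $Z_k/Z_l$, which reduces to the estimate $Z_{l-k}\geq 1$. Since $l>k$ we have $l-k\geq 1$, and for $\theta\geq 1$ the single term $\theta^{l-k}$ in $Z_{l-k}$ is already at least $1$ (all other summands being nonnegative). Hence the maximum is attained at $j_k=k_k$, completing the argument.
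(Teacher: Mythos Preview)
Your proof is correct and follows essentially the same route as the paper: both invoke Lemma~\ref{qjkTrees} to reduce to $\mu_l(s_l(j_k))/\mu_k(j_k)$, then evaluate the two cases via Corollary~\ref{muktrees} and recognize the factored sum as $\theta^k Z_{l-k}$. The only minor difference is in the maximum statement: you show $Z_{l-k}\geq 1$ directly from $\theta^{l-k}\geq 1$, whereas the paper argues that $Z_k/Z_l<1$ for the interior nodes and then uses the averaging identity $\mu_k(q_{k,l}(1))=1$ to force the maximum (necessarily $>1$) onto $k_k$; both are valid and equally short.
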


\begin{proof}[Proof of Lemma \ref{qjktreeex}]
Recall from Lemma \ref{qjkTrees} that
\[
q_{k,l}(1)(j_k)=\frac{\mu_l(s_l(j_k))}{\mu_k(j_k)}.
\]
Thus for $j_k \neq k_k$ Corollary \ref{muktrees} immediately implies
\[
q_{k,l}(1)(j_k)=\frac{\mu_l(j_l)}{\mu_k(j_k)}=\frac{Z_k}{Z_l}.
\]
For $j_k = k_k$ we have
\begin{eqnarray}
q_{k,l}(1)(k_k)&=&\frac{\mu_l(\{k_l,\ldots,l_l\})}{\mu_k(k_k)}\nonumber\\
&=& \frac{Z_k}{Z_l} \left(
\frac{\theta^l + \sum_{i=k}^{l-1}\theta^{i+1}
}{\theta^k}
\right)\nonumber\\
&=&\frac{Z_k}{Z_l} \left(
\theta^{l-k}+\sum_{i=0}^{l-k-1}\theta^{i+1}
\right)\nonumber\\
&=&
\frac{Z_k Z_{l-k}}{Z_l}.\nonumber
\end{eqnarray}

Observe from (\ref{Zk1}) that $Z_k<Z_l$ and thus for $j_k\neq k_k$ $q_{k,l}(1)(j_k)<1$. Since both $\mu_k$ and $\mu_l$ are probability measures and since $\mu_k(q_{k,l}(1))=\mu_l(1)=1$ this implies
\[
\max_{j_k \in I_k} q_{k,l}(1)(j_k)=q_{k,l}(1)(k_k)>1.
\]
\end{proof}

Thus in order to control $q_{k,l}(1)$ we need bounds on the constants $Z_k$. The following lemma gives two pairs of bounds on $Z_k$. The bounds in (\ref{Zkbd1}) get sharp as $\theta$ approaches $1$ while the bounds in (\ref{Zkbd2}) get sharp as $\theta$ gets large.

\begin{lem}\label{Zkbds}
We have for $\theta\geq 1$
\begin{equation}\label{Zkbd1}
(k+1) \theta \leq Z_k \leq (k+1)\theta ^k
\end{equation}
and 
\begin{equation}\label{Zkbd2}
2 \theta^k \leq Z_k\;\;\;\;\; \;\;\;\;\text{   and, if    } \theta >1,\;\;\;\; \text{    } Z_k \leq  \rho(\theta) \theta^k
\end{equation}
where we define
\begin{equation}\label{defrho}
\rho(\theta)=2+\frac{1}{\theta-1}.
\end{equation}
\end{lem}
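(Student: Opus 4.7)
The plan is to read off both pairs of inequalities directly from the two expressions for $Z_k$ provided in Corollary \ref{muktrees}, namely $Z_k = \theta^k + \sum_{j=0}^{k-1}\theta^{j+1}$ and (for $\theta\neq 1$) $Z_k = \theta^k + \frac{\theta}{\theta-1}(\theta^k-1)$. All four bounds are elementary consequences of these formulas once $\theta\geq 1$ is imposed.

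For (\ref{Zkbd1}), I would start from the sum representation (\ref{Zk1}). Since $\theta\geq 1$, each of the $k$ summands $\theta^{j+1}$ with $0\leq j\leq k-1$ satisfies $\theta\leq \theta^{j+1}\leq \theta^k$, and the leading term $\theta^k$ itself clearly lies in $[\theta,\theta^k]$. Adding these $k+1$ uniform bounds yields $(k+1)\theta\leq Z_k\leq (k+1)\theta^k$, which is exactly (\ref{Zkbd1}).

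For (\ref{Zkbd2}), the lower bound $Z_k\geq 2\theta^k$ follows by retaining only the largest term of the geometric sum: $\sum_{j=0}^{k-1}\theta^{j+1}\geq \theta^k$ (taking $j=k-1$), so that $Z_k\geq \theta^k+\theta^k=2\theta^k$. For the upper bound, when $\theta>1$ I would invoke the closed form (\ref{Zk2}) and estimate crudely
\[
Z_k = \theta^k + \frac{\theta}{\theta-1}(\theta^k-1) \leq \theta^k\Bigl(1+\frac{\theta}{\theta-1}\Bigr) = \theta^k\cdot\frac{2\theta-1}{\theta-1} = \theta^k\Bigl(2+\frac{1}{\theta-1}\Bigr) = \rho(\theta)\,\theta^k,
\]
which matches the definition (\ref{defrho}) of $\rho(\theta)$.

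There is no serious obstacle here — the lemma is essentially a bookkeeping step packaging two complementary regimes: (\ref{Zkbd1}) tracks the dependence on $k$ when $\theta$ is near $1$ (where $\theta^k\approx 1$ and the $k+1$ nearly equal summands dominate), while (\ref{Zkbd2}) isolates the geometric growth $\theta^k$ with a $k$-independent prefactor $\rho(\theta)$ that blows up only as $\theta\downarrow 1$. The only mild care needed is to avoid dividing by $\theta-1$ in the regime $\theta=1$, which is why the upper bound in (\ref{Zkbd2}) is stated under the hypothesis $\theta>1$.
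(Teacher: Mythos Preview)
Your proof is correct and follows essentially the same approach as the paper: deriving (\ref{Zkbd1}) and the lower bound in (\ref{Zkbd2}) directly from the sum representation (\ref{Zk1}) via term-by-term comparison, and the upper bound in (\ref{Zkbd2}) from the closed form (\ref{Zk2}) by dropping the $-1$ and simplifying $1+\theta/(\theta-1)=\rho(\theta)$. Your write-up is in fact a bit more explicit than the paper's, which simply asserts that the first three bounds follow immediately from (\ref{Zk1}) and the monotonicity $\theta^k\geq\theta^i$ for $k>i$.
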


\begin{proof}[Proof of Lemma \ref{Zkbds}]
The bounds in (\ref{Zkbd1}) and the lower bound in (\ref{Zkbd2}) follow immediately from (\ref{Zk1}) and from the fact that for $k>i$ we have $\theta^k \geq \theta^i$. The upper bound in (\ref{Zkbd2}) follows from (\ref{Zk2}) since
\[
Z_k=\theta^k+\frac{\theta}{\theta-1}(\theta^k-1)< \left(1+\frac{\theta}{\theta-1}\right)\theta^k=
\left(2+\frac{1}{\theta-1}\right)\theta^k
.
\]
\end{proof}

 We thus arrive at the following upper bound on $\|q_{k,l}(1)\|_{k}$.

\begin{cor}\label{qklbds}
For $k<l$ and $\theta >1$ we have 
\[
\|q_{k,l}(1)\|_k \leq \min\left(
\frac{\rho(\theta)^2}{2},
\frac{\rho(\theta)^2}{l+1}\,\theta^{l-1},
\frac{(l+2)^2}8,
\frac{l+2}{2} \,\theta^{l-1}
\right).
\]
\end{cor}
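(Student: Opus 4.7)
The plan is to combine the explicit formula for $\|q_{k,l}(1)\|_k$ from Lemma \ref{qjktreeex} with the bounds on the normalizing constants from Lemma \ref{Zkbds}. Since $\theta > 1$, Lemma \ref{qjktreeex} gives
\[
\|q_{k,l}(1)\|_k = \frac{Z_k\,Z_{l-k}}{Z_l},
\]
so the task reduces to bounding the numerator from above and the denominator from below. Lemma \ref{Zkbds} supplies two upper bounds on $Z_k$ (namely $(k+1)\theta^k$ and $\rho(\theta)\theta^k$) and two lower bounds ($2\theta^k$ and $(k+1)\theta$). The four terms in the minimum correspond to the four ways of pairing these.

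Concretely, I would carry out the four combinations in order. First, using $Z_k Z_{l-k}\leq \rho(\theta)^2\theta^l$ against $Z_l\geq 2\theta^l$ gives the bound $\rho(\theta)^2/2$. Second, pairing the same numerator estimate with $Z_l\geq (l+1)\theta$ yields $\rho(\theta)^2\theta^{l-1}/(l+1)$. Third, using $Z_k Z_{l-k}\leq (k+1)(l-k+1)\theta^l$ against $Z_l\geq 2\theta^l$ leaves $(k+1)(l-k+1)/2$; here I invoke AM--GM in the form $(k+1)(l-k+1)\leq ((l+2)/2)^2$ to obtain $(l+2)^2/8$. Fourth, the same numerator bound combined with $Z_l\geq (l+1)\theta$ yields $(k+1)(l-k+1)\theta^{l-1}/(l+1)$, which is at most $(l+2)\theta^{l-1}/2$ again by AM--GM (noting $(l+2)^2/[4(l+1)]\leq (l+2)/2$ for $l\geq 0$).

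There is no real obstacle: the identity for $\|q_{k,l}(1)\|_k$ is already in hand from Lemma \ref{qjktreeex}, and the bounds on $Z_k$ are pre-packaged in Lemma \ref{Zkbds}. The only mildly non-routine step is the AM--GM estimate $(k+1)(l-k+1)\leq (l+2)^2/4$, which makes the two bounds that are polynomial in $l$ uniform over $k\in\{0,\ldots,l-1\}$. Taking the minimum of the four resulting expressions yields the claim.
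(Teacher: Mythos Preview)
Your proposal is correct and follows essentially the same route as the paper: start from $\|q_{k,l}(1)\|_k = Z_k Z_{l-k}/Z_l$ via Lemma~\ref{qjktreeex}, then combine the two upper bounds and two lower bounds on $Z_m$ from Lemma~\ref{Zkbds} to obtain the four estimates, using the AM--GM inequality $(k+1)(l-k+1)\leq (l+2)^2/4$ to remove the $k$-dependence in the third and fourth. The paper states the inequality $(k+1)(l-k+1)/(l+1)\leq (l+2)/2$ directly rather than deducing it from AM--GM as you do, but this is a cosmetic difference only.
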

\begin{proof}[Proof of Corollary \ref{qklbds}]
By combining each time one lower bound and one upper bound from Lemma \ref{Zkbds} we obtain four upper bounds on
\[
\|q_{k,l}(1)\|_k =\frac{Z_{k}Z_{l-k}}{Z_l}.
\]
Applying the inequalities $(k+1)(l-k+1)\leq \frac14 (l+2)^2$ and $(k+1)(l-k+1)/(l+1)\leq \frac{1}{2}(l+2)$ completes the proof.
\end{proof}

For  $\theta$ sufficiently close to $1$, the upper bound 
\begin{equation}\label{thetabdl}
\|q_{k,l}(1)\|_k\leq 
\frac{l+2}{2} \theta^{l-1},
\end{equation}
which is obtained from using both directions of (\ref{Zkbd1}), is the sharpest one. For sufficiently large $\theta$, the bound
\begin{equation}\label{thetabd}
\|q_{k,l}(1)\|_k\leq 
\frac{\rho(\theta)^2}{2} 
\end{equation}
obtained from (\ref{Zkbd2}) is best. Depending on the values of $k$ and $l$, one of the two other bounds may be even better for intermediate values of $\theta$. Finally, note that the third and fourth bounds also apply to $\theta=1$ since they do not rely on the upper bound from (\ref{Zkbd2}).\bigskip

It is quite intuitive, that for $\theta \approx 1$ our bounds on $q_{j,k}(1)$ depend more sensitively on $k$. With a large value of $\theta$ mass is concentrated quickly in the highest branch of the tree such that the sequence $a_k=\mu_{k}(k_k)$ varies relatively little in $k$. For $\theta \approx 1$, mass is accumulated only slowly in $k_k$  as $k$ increases such that the same sequence $a_k$ is increasing substantially in $k$ at least for small values of $k$. This increase is reflected in the fact that our upper bound on $q_{j,k}(1)$ is increasing with $k$ in that case. Put differently, for $\theta$ close to $1$ and $k$ not large, the distributions $\mu_k$ are not very concentrated (i.e. close to the uniform distribution) and thus more costly to approximate. As we will see, the approximation error of our algorithm is indeed of  worse order in $n$ at $\theta=1$ than for $\theta>1$ (or $\theta<1$). This can also be seen as an elementary manifestation of the critical slowing down phenomenon. 

\subsubsection{Error Bounds for Sequential MCMC}\label{errSMCMCtreeEx}

In the following we give two error bounds, both based on Theorem \ref{thmBound2}: one which degenerates as $\theta$ approaches $1$, and one which does not degenerate but which is worse for $\theta$ sufficiently greater than $1$. Before we begin, note that a dependence on the parameter $n$ enters the error bound from two sources. While the two terms of the error bound of Theorem \ref{thmBound2} are, respectively, linear and quadratic in $n$, we obtain a stronger dependence on $n$ in Proposition \ref{errtreeextheta1} below since $n$ is also the size of the state space $I_n$ and a parameter of the distribution $\mu_n$. To confirm that this difference between the results is not an artefact of our upper bounds, we calculate the asymptotic variance in the case $\theta=1$ explicitly in Lemma \ref{lemAssVarEx} at the end of this section.\bigskip

The first result, for $\theta$ sufficiently greater than $1$, is based on the bound (\ref{thetabd}), i.e., we choose
\[
\|q_{k,l}(1)\|_k^2\leq d_{k,l}=\frac{\rho(\theta)^4}{4},
\]
with $\rho(\theta)$ as defined in (\ref{defrho}).

\begin{prop}\label{errtreeextheta2}
Consider $\theta > 1$,  $N> \rho(\theta)^4\,(n+1)$ and $f\in B(I_n)$. Then we have
\[
\mathbb{E}[|\nu_n^N(f)-\mu_n(f)|^2] \leq \|f\|_n^2\,\left( \frac{\rho(\theta)^2}{2}\frac{n+1}{N} + \rho(\theta)^6\, \frac{(n+1)^2}{N^2} \right).
\] 
\end{prop}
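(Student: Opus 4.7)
The plan is to apply Theorem \ref{thmBound2} with the uniform stability constants coming from the bound (\ref{thetabd}) of Corollary \ref{qklbds}, and then unwind the definitions.

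First I would recall that, by Proposition \ref{propDJK}, the norm inequality (\ref{djkinequality}) holds with $d_{j,k}=\|q_{j,k}(1)\|_j^2$ for $j<k$, and that in the supremum norm setting we may take $d_{k,k}=1$ (since $\|f^2\|_k=\|f\|_k^2$ and $\|1\|_k=1$). The bound (\ref{thetabd}) from Corollary \ref{qklbds} gives $\|q_{j,k}(1)\|_j\leq \rho(\theta)^2/2$ for every pair $j<k$, so I would set $d_{j,k}=\rho(\theta)^4/4$ uniformly for $j<k$. Since $\rho(\theta)>2$ when $\theta>1$, we also have $d_{k,k}=1\leq \rho(\theta)^4/4$, so the single constant $\rho(\theta)^4/4$ dominates $d_{j,n}$ for all $0\le j\le n$.

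Next I would compute the auxiliary quantities. Summing yields
\[
\widehat{d}_n=2\sum_{j=0}^n d_{j,n}\leq 2(n+1)\frac{\rho(\theta)^4}{4}=\frac{\rho(\theta)^4(n+1)}{2},
\]
and hence $\overline{d}_n\leq \rho(\theta)^4(n+1)/2$. Similarly, Proposition \ref{propDJK} gives $\text{Var}_{\mu_j}(q_{j,n}(f))\leq \sqrt{d_{j,n}}\,\|f\|_n^2\leq (\rho(\theta)^2/2)\|f\|_n^2$, which both bounds $\sum_{j=0}^n \text{Var}_{\mu_j}(q_{j,n}(f))\leq \rho(\theta)^2(n+1)\|f\|_n^2/2$ and implies $\overline{v}_n\leq \rho(\theta)^2(n+1)/2$ (after taking the supremum over $f$ with $\|f\|_n\leq 1$ and the maximum over levels).

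The hypothesis $N>\rho(\theta)^4(n+1)$ is exactly the condition $N\geq 2\overline{d}_n$ required to invoke (\ref{thmf2c2}), which yields
\[
\overline{\varepsilon}_n^N\leq \frac{2\overline{v}_n}{N}\leq \frac{\rho(\theta)^2(n+1)}{N}.
\]
Finally, I would plug everything into (\ref{thmf1c2}):
\[
N\,\mathbb{E}[|\nu_n^N(f)-\mu_n(f)|^2]\leq \frac{\rho(\theta)^2(n+1)}{2}\|f\|_n^2 + \|f\|_n^2 \cdot \frac{\rho(\theta)^4(n+1)}{2}\cdot \frac{\rho(\theta)^2(n+1)}{N},
\]
and divide through by $N$, discarding the factor $1/2$ in the second term, to recover the announced bound.

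The computation is essentially bookkeeping once the right stability constants have been identified; there is no real obstacle. The only place where one needs to be slightly careful is in handling the edge case $j=k=n$ in Theorem \ref{thmBound2}, where Proposition \ref{propDJK} gives $d_{n,n}=1$ through the trivial identity $\|q_{n,n}(f)\|_n^2=\|f\|_n^2$; absorbing this term into the uniform bound $\rho(\theta)^4/4$ (which requires $\rho(\theta)\geq\sqrt{2}$, automatic for $\theta>1$) is what allows the single constant to control every summand.
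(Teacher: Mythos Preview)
Your proof is correct and follows essentially the same route as the paper: both choose the uniform stability constant $d_{j,k}=\rho(\theta)^4/4$ from (\ref{thetabd}), compute $\widehat{d}_k\leq \rho(\theta)^4(k+1)/2$ and $\widehat{v}_k\leq \rho(\theta)^2(k+1)/2$, and feed these into Theorem~\ref{thmBound2}. Your treatment of the diagonal term $d_{n,n}$ is in fact slightly more explicit than the paper's, which silently absorbs it into the same bound.
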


\begin{proof}[Proof of Proposition \ref{errtreeextheta2}]
In order to apply Theorem \ref{thmBound2} we have to control the constants introduced therein. By our choice of $d_{j,k}$, we get
\[
\widehat{d}_k \leq \frac{\rho(\theta)^4 (k+1)}{2}.
\]
Since this upper bound is increasing in $k$ it also applies to $\overline{d}_k$. Furthermore by (\ref{varmungrob}) we have
\[
\sum_{j=0}^n \text{\textnormal{Var}}_{\mu_j}(q_{j,n}(f))\leq \frac{\rho(\theta)^2}{2}(n+1) \,\|f\|_n^2.
\]
This implies
\[
\widehat{v}_{k} \leq  \frac{\rho(\theta)^2}{2}(k+1)
\]
and since this upper bound is increasing in $k$ it also applies to  $\overline{v}_{k}$. Inserting these results into Theorem \ref{thmBound2} completes the proof.
\end{proof}

These bounds degenerate quickly as $\theta$ approaches $1$ since $\rho(\theta)$ gets arbitrarily large then. To demonstrate that we obtain reasonable constants in our bounds for sufficiently large $\theta$, we give the following result derived from the special case $\theta=2$ and thus $\rho(2)=3$.

\begin{cor}
Consider $\theta \geq 2$,  $N> 81 (n+1)$ and $f\in B(I_n)$. Then we have
\[
\mathbb{E}[|\nu_n^N(f)-\mu_n(f)|^2] \leq \|f\|_n^2\,\left( \frac{9}{2}\,\frac{n+1}{N} + 729\,\frac{(n+1)^2}{N^2} \right).
\] 
\end{cor}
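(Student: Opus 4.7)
The plan is to deduce this corollary directly from Proposition \ref{errtreeextheta2} by exploiting the monotonicity of $\rho$ in $\theta$.

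First I would observe that the function $\rho(\theta) = 2 + \frac{1}{\theta-1}$ defined in (\ref{defrho}) is strictly decreasing on $(1,\infty)$. Consequently, for every $\theta \geq 2$ we have the uniform bound $\rho(\theta) \leq \rho(2) = 3$, so that $\rho(\theta)^2 \leq 9$, $\rho(\theta)^4 \leq 81$, and $\rho(\theta)^6 \leq 729$.

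Next I would verify the hypothesis of Proposition \ref{errtreeextheta2}. The proposition requires $N > \rho(\theta)^4(n+1)$. Since $\rho(\theta)^4 \leq 81$ by the previous step, the assumption $N > 81(n+1)$ of the corollary ensures $N > \rho(\theta)^4(n+1)$, so Proposition \ref{errtreeextheta2} applies.

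Finally I would invoke the conclusion of Proposition \ref{errtreeextheta2} and replace $\rho(\theta)^2$ and $\rho(\theta)^6$ by their respective upper bounds $9$ and $729$. This yields
\[
\mathbb{E}[|\nu_n^N(f)-\mu_n(f)|^2] \leq \|f\|_n^2\,\left( \frac{\rho(\theta)^2}{2}\frac{n+1}{N} + \rho(\theta)^6\, \frac{(n+1)^2}{N^2} \right) \leq \|f\|_n^2\,\left( \frac{9}{2}\,\frac{n+1}{N} + 729\,\frac{(n+1)^2}{N^2} \right),
\]
which is the claimed bound. There is no genuine obstacle here; the corollary is a direct numerical specialization of Proposition \ref{errtreeextheta2} obtained by monotonicity of $\rho$, and its purpose is purely illustrative — to exhibit concrete, reasonable constants in a regime where $\theta$ is bounded away from $1$.
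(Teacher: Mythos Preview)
Your proposal is correct and matches the paper's approach: the corollary is stated as an immediate numerical specialization of Proposition \ref{errtreeextheta2} via $\rho(2)=3$, and your explicit use of the monotonicity of $\rho$ is exactly what is needed to cover all $\theta\geq 2$.
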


We now turn to a bound which does not degenerate at $\theta=1$. For the sake of simplicity we rely on the bound
\begin{equation}
\|q_{k,l}(1)\|_k^2\leq d_{k,l}= 
 \frac{(l+2)^4}{64}
 \end{equation}
from Corollary \ref{qklbds} instead of the bound (\ref{thetabdl}) which is sharper  and has a better order in $l$ for $\theta \approx 1$ but which degenerates quickly as $\theta$ increases. 

\begin{prop}\label{errtreeextheta1}
Consider $\theta \geq 1$,  $N> \frac{1}{16} (n+2)^5$ and $f\in B(I_n)$. Then we have
\[
\mathbb{E}[|\nu_n^N(f)-\mu_n(f)|^2] \leq \|f\|_n^2\,\left(\frac18\, \frac{ (n+2)^3}{N}
 + \frac{1}{128} \,\frac{ (n+2)^8}{  N^2} \right).
\] 
\end{prop}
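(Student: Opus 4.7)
The plan is to mimic the structure of the proof of Proposition \ref{errtreeextheta2}, but substituting the $\theta$-independent bound $\|q_{k,l}(1)\|_k^2 \leq (l+2)^4/64$ from Corollary \ref{qklbds} for the bound $\rho(\theta)^4/4$ that was used there. Theorem \ref{thmBound2} and Proposition \ref{propDJK} do the actual work; the task is essentially bookkeeping of constants. In particular, I would set
\[
d_{j,k} = \frac{(k+2)^4}{64}
\]
for all $0 \leq j \leq k \leq n$; by Corollary \ref{qklbds} and $\|1\|_j = 1$ this satisfies the stability inequality (\ref{djninequality}) required by Theorem \ref{thmBound2}.

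From this choice I would compute the aggregate constants. Since $d_{j,k}$ does not depend on $j$,
\[
\widehat{d}_k = 2\sum_{j=0}^k d_{j,k} = \frac{(k+1)(k+2)^4}{32} \leq \frac{(k+2)^5}{32},
\]
and the right-hand side is increasing in $k$, so $\overline{d}_n \leq (n+2)^5/32$. The hypothesis $N > (n+2)^5/16$ is then precisely $N \geq 2\overline{d}_n$, which is what Theorem \ref{thmBound2} needs. For the variance piece I would invoke (\ref{varmungrob}), which gives
\[
\text{\textnormal{Var}}_{\mu_j}(q_{j,n}(f)) \leq \sqrt{d_{j,n}}\,\|f\|_n^2 = \frac{(n+2)^2}{8}\,\|f\|_n^2,
\]
and so $\widehat v_k \leq (k+1)(k+2)^2/8 \leq (k+2)^3/8$, which again is monotone in $k$ and yields $\overline v_n \leq (n+2)^3/8$.

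Combining these via (\ref{thmf2c2}) gives $\overline{\varepsilon}_n^{N} \leq 2\overline v_n/N \leq (n+2)^3/(4N)$. Plugging everything into (\ref{thmf1c2}) yields
\[
N\,\mathbb{E}[|\nu_n^N(f)-\mu_n(f)|^2] \leq \|f\|_n^2\left(\frac{(n+2)^3}{8} + \frac{(n+2)^5}{32}\cdot\frac{(n+2)^3}{4N}\right),
\]
and dividing by $N$ produces the stated estimate. There is no real obstacle here: unlike Proposition \ref{errtreeextheta2}, the $\theta$-uniform bound from Corollary \ref{qklbds} is directly available, and the constants $d_{j,k}$ do not depend on $j$, so the summation over $j$ is trivial. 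The only mild subtlety is choosing which of the four bounds in Corollary \ref{qklbds} to use; the bound $(l+2)^4/64$ is selected precisely because it is independent of $\theta$ and therefore the result does not degenerate at $\theta = 1$, at the cost of a worse polynomial order in $n$ than the bound of Proposition \ref{errtreeextheta2} produces for $\theta$ bounded away from $1$.
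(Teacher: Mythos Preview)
Your proof is correct and follows essentially the same route as the paper's own proof: both choose $d_{j,k}=(k+2)^4/64$ from Corollary \ref{qklbds}, obtain $\widehat d_k\le (k+2)^5/32$ and $\widehat v_k\le (k+2)^3/8$ via (\ref{varmungrob}), note monotonicity in $k$, and insert into Theorem \ref{thmBound2}. Your write-up merely spells out the intermediate step $(k+1)(k+2)^4/32$ that the paper leaves implicit.
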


\begin{proof}[Proof of Proposition \ref{errtreeextheta1}]
By our choice of $d_{j,k}$, we get
\[
\widehat{d}_k \leq \frac{(k+2)^5}{32}.
\]
Since this bound is increasing in $k$ it also applies to $\overline{d}_k$. Furthermore by (\ref{varmungrob}), we have
\[
\sum_{j=0}^n \text{\textnormal{Var}}_{\mu_j}(q_{j,n}(f))\leq \frac{(n+2)^3}{8} \,\|f\|_n^2, \text{ and } \widehat{v}_{k}\leq  \frac{(k+2)^3}{8}.
\]
Since the latter bound is increasing in $k$ it also applies to $\overline{v}_k$. Inserting these results into Theorem \ref{thmBound2} completes the proof.
\end{proof}

As noted above we used in Proposition \ref{errtreeextheta1} a bound of order $n^4$ on $\|q_{k,n}(1)\|_k^2$ instead of relying on (\ref{thetabdl}) which may have led to a better order at least for $\theta$ close to $1$. Thus we expect that the error bound of Proposition \ref{errtreeextheta1} can be improved concerning the order in $n$. In Section \ref{wdSIS}, we show however that the approximation error of Sequential Importance Sampling is growing exponentially in $n$ in this example. Thus Proposition \ref{errtreeextheta1} is strong enough to make our point that the resampling step in our Sequential MCMC algorithm overcomes the problem of weight degeneracy.\bigskip

To close our analysis of the error bound for $\theta$ close to $1$, we explicitly calculate the asymptotic variance -- and thus the leading coefficient in the error bound of Theorem \ref{thmBound2} -- for the case $\theta=1$ and $f \equiv 1\in B(I_n)$. This asymptotic variance is quadratic in $n$ which proves that it is no artifact of our upper bounds, that we do not achieve as good an order in $n$ in Proposition \ref{errtreeextheta1} as in Proposition \ref{errtreeextheta2}.

\begin{lem}\label{lemAssVarEx}
For $\theta=1$ we have 
\[
\text{\textnormal{Var}}^{\text{as}}_n (1) =\sum_{j=0}^n \text{\textnormal{Var}}_{\mu_j}(q_{j,n}(1))=\frac{n^2(n-1)}{12 (n+1)}.
\]
\end{lem}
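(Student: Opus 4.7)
The plan is to apply Proposition \ref{assvartest} directly, exploiting the fact that for $\theta=1$ Corollary \ref{muktrees} collapses to a very simple form. Indeed, at $\theta=1$ we have $Z_k = k+1$ and $\mu_k$ is the uniform distribution on $I_k$, i.e. $\mu_k(i_k) = 1/(k+1)$ for every $i \leq k$. So the first step is to write out both $\mu_n^{\shortrightarrow j}$ and $q_{j,n}(1)$ explicitly using Lemma \ref{qjkTrees}.

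Recall from the tree structure that $s_n(i_j) = \{i_n\}$ when $i < j$, whereas $s_n(j_j) = \{j_n,(j+1)_n,\ldots,n_n\}$. Combining this with uniformity of $\mu_n$ gives $\mu_n^{\shortrightarrow j}(i_j) = 1/(n+1)$ for $i<j$ and $\mu_n^{\shortrightarrow j}(j_j) = (n-j+1)/(n+1)$. Dividing by $\mu_j(i_j) = 1/(j+1)$ yields $q_{j,n}(1)(i_j) = (j+1)/(n+1)$ for $i<j$ and $q_{j,n}(1)(j_j) = (n-j+1)(j+1)/(n+1)$.

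Next I would compute the per-level summand in Proposition \ref{assvartest}. The contribution of each $i<j$ to $\mu_n^{\shortrightarrow j}(q_{j,n}(1)-1)$ is $-(n-j)/(n+1)^2$, giving $-j(n-j)/(n+1)^2$ in total. For the top node $j_j$, a short algebraic identity $(n-j+1)(j+1)-(n+1) = j(n-j)$ yields a contribution of $j(n-j)(n-j+1)/(n+1)^2$. Adding the two pieces and telescoping the factor $(n-j+1)-1=n-j$ produces the clean expression
\[
\text{\textnormal{Var}}_{\mu_j}(q_{j,n}(1)) = \frac{j(n-j)^2}{(n+1)^2}.
\]
This vanishes at $j=0$ and $j=n$, as it should (since $\mu_0$ is a point mass and $q_{n,n}(1)\equiv 1$).

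The final step is a routine summation: after substituting $k=n-j$,
\[
\sum_{j=0}^n j(n-j)^2 = n\sum_{k=0}^n k^2 - \sum_{k=0}^n k^3 = \frac{n^2(n+1)(2n+1)}{6} - \frac{n^2(n+1)^2}{4} = \frac{n^2(n+1)(n-1)}{12},
\]
and dividing by $(n+1)^2$ yields the claimed value $n^2(n-1)/(12(n+1))$. There is no real obstacle here; the only point requiring care is the correct bookkeeping of the ``top-node'' contributions at each level $j$, since these are what keep the sum from being negative and ultimately drive its quadratic growth in $n$.
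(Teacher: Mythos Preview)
Your proof is correct and follows essentially the same route as the paper: both apply Proposition~\ref{assvartest}, identify $\mu_j$ as uniform on $I_j$ and write out $\mu_n^{\shortrightarrow j}$ explicitly, then sum the per-level variances. Your version simplifies the per-level contribution further to the closed form $j(n-j)^2/(n+1)^2$ and carries out the final summation explicitly, whereas the paper leaves $w_j$ as $-1+\frac{j+1}{(n+1)^2}\bigl(j+(n-j+1)^2\bigr)$ and declares the remaining sum ``straightforward''; the two expressions for $w_j$ are of course equal.
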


\begin{proof}[Proof of Lemma \ref{lemAssVarEx}]
By Proposition \ref{assvartest} we have
\[
\text{\textnormal{Var}}^{\text{as}}_n (1) =\sum_{j=0}^n w_j,
\;\text{ where}\;
w_j=\sum_{x\in I_j} \mu_n^{\shortrightarrow j}(x)\left(
\frac{\mu_n^{\shortrightarrow j}(x)}{\mu_j(x)}-1\right).
\]
Now observe that for $\theta=1$ and for all $x \in I_j$ we have
\[
\mu_j(x)=\frac{1}{j+1}
\]
and
\[
\mu_n^{\shortrightarrow j}(x)=\left\{\begin{array}{ll}
\frac{n-j+1}{n+1} & \text{for } x=j_j,\\&\\
\frac{1}{n+1} & \text{otherwise}.
\end{array}\right.
\]
Thus we have
\begin{eqnarray}
w_j&=& \sum_{x\in I_j} \mu_n^{\shortrightarrow j} (x)((j+1)\mu_n^{\shortrightarrow j} (x)-1)\nonumber\\
&=&-1+(j+1) \sum_{x\in I_j} \mu_n^{\shortrightarrow j} (x)^2 \nonumber\\
&=& -1+\frac{j+1}{(n+1)^2}\left(j+(n-j+1)^2\right).\nonumber
\end{eqnarray}
It is then straightforward to calculate that
\[
\text{\textnormal{Var}}^{\text{as}}_n (1) =\sum_{j=0}^n w_j=\frac{n^2(n-1)}{12 (n+1)}
\]
which completes the proof.
\end{proof}

\subsubsection{Weight Degeneracy of Sequential Importance Sampling}\label{wdSIS}

We now turn to the analysis of Sequential Importance Sampling as introduced in Section \ref{secSIStrees} for the present example. We have
\[
\pi_n(j_n)=\left\{\begin{array}{ll}
2^{-j+1}& \text{ for } j<n\\
2^{-n}& \text{ for } j=n\\
\end{array}
 \right.
\]
To prove that depending on the value of $\theta$ the approximation error of $\widetilde{\eta}_n(f)$ may grow exponentially in $n$,  consider the approximation error for the test function $f=1_{\{n_n\}}$.

\begin{cor}\label{SISerrorn}
For $f=1_{\{n_n\}}$ and $\theta > 0$ we have
\[
\mathbb{E}[|\widetilde{\eta}_n(f)-\mu_n(f)|^2]= \left\{\begin{array}{ll}
\frac{2^n-1}{N \left(1+\frac{\theta}{\theta-1}(1-\theta^{-n})\right)^2}& \text{ for }\theta \neq 1,\\
\,&\,\\
\frac{2^n-1}{N(n+1)^2} &\text{ for }\theta = 1
\end{array}\right.
\]
Moreover, $\mathbb{E}[|\widetilde{\eta}_n(f)-\mu_n(f)|^2]$ grows exponentially in $n$ whenever $\theta > 2^{-\frac12}$.
\end{cor}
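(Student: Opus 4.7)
The plan is to apply Lemma \ref{SISerror} with $x = n_n \in I_n$. This reduces the task to computing $\mu_n(n_n)$ and $\pi_n(n_n)$. For the first, Corollary \ref{muktrees} gives $\mu_n(n_n) = \theta^n / Z_n$. For the second, since $|I_0| = 1$ the particle starts deterministically at $0_0$; to reach $n_n$ it must, at every one of the $n$ applications of the kernels, move from the current ``largest'' state $k_k$ to the freshly created successor $(k+1)_{k+1}$, which has probability $1/2$ under $K_{k+1}$. Hence $\pi_n(n_n) = 2^{-n}$ and so $\pi_n(n_n)^{-1} - 1 = 2^n - 1$.

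Substituting into Lemma \ref{SISerror} immediately yields
\[
\mathbb{E}[|\widetilde{\eta}_n(f)-\mu_n(f)|^2] = \frac{\theta^{2n}(2^n-1)}{N\,Z_n^2}.
\]
The two cases in the statement then come from choosing the appropriate representation of $Z_n$: for $\theta = 1$ formula (\ref{Zk1}) gives $Z_n = n+1$ directly, producing the second line; for $\theta \neq 1$ formula (\ref{Zk2}) can be factored as $Z_n = \theta^n\bigl(1 + \tfrac{\theta}{\theta - 1}(1 - \theta^{-n})\bigr)$, so the $\theta^{2n}$ in the numerator cancels and the first line drops out.

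For the asymptotic claim I would split into three regimes. If $\theta > 1$, then $\theta^{-n} \to 0$ and the bracketed factor in the denominator converges to the positive constant $(2\theta - 1)/(\theta - 1)$, so the error grows like $2^n/N$. If $\theta = 1$, the denominator is only polynomial in $n$, while the numerator is $2^n - 1$, again giving exponential growth. The only case requiring care is $0 < \theta < 1$: here $Z_n = \theta + \theta^2 + \cdots + \theta^{n-1} + 2\theta^n$ remains bounded and in fact converges to $\theta/(1-\theta)$, so $\mu_n(n_n)^2 \sim (1-\theta)^2 \theta^{2n-2}$ and the error is of order $(2\theta^2)^n/N$. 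This grows exponentially if and only if $2\theta^2 > 1$, i.e.\ $\theta > 2^{-1/2}$, which is the stated threshold.

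There is no real obstacle; the only point demanding a little attention is signs in the $\theta < 1$ sub-case (both $\theta - 1$ and $1 - \theta^{-n}$ are negative, so the bracketed quantity in the denominator is still positive but tends to $+\infty$). This is cleanest to handle by going back to the unnormalized expression $Z_n \to \theta/(1-\theta)$ rather than manipulating the denominator form given in the first line of the statement.
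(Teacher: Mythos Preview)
Your proof is correct and follows essentially the same route as the paper: apply Lemma~\ref{SISerror} with $x=n_n$, read off $\pi_n(n_n)=2^{-n}$ from the kernel structure, and compute $\mu_n(n_n)=\theta^n/Z_n$ from Corollary~\ref{muktrees}, then simplify using the two expressions for $Z_n$. For the asymptotic statement the paper is terser---it simply notes that the error grows exponentially precisely when $2^n\theta^{2n}\to\infty$, i.e.\ when $\theta>2^{-1/2}$---whereas you spell out the three regimes $\theta>1$, $\theta=1$, $0<\theta<1$ separately; your version is more explicit but the content is the same.
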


\begin{proof}[Proof of Corollary \ref{SISerrorn}]
The explicit formula for the error is a direct consequence of Lemma \ref{SISerror}: We have $\pi_n(n_n)=2^{-n}$, and the representation of $\mu_n$ given in Corollary \ref{muktrees} yields
\[
\mu_n(n_n)=\frac{1}{1+\frac{\theta}{\theta-1}(1-\theta^{-n})}
\]
for $\theta \neq 1$ and \[\mu_n(n_n)=\frac{1}{n+1}\] for $\theta=1$. The error grows exponentially in $n$ whenever $2^n\theta^{2n}$ tends to infinity in $n$ which is the case for $\theta> 2^{-\frac12}$.
\end{proof}

Thus Sequential Importance Sampling suffers from weight degeneracy when approximating $f=1_{\{n_n\}}$ even in some cases (i.e. $ 2^{-\frac12}<\theta <1$) where $\mu_n(n_n)$ is decreasing exponentially itself.

\subsubsection{Further Examples}\label{moreexamples}

The fast degeneration of Sequential Importance Sampling in the previous example stems from the fact that the particles' movements only depend on the kernels $K_k$ and do not take into account the reweighting through the functions $g_{k,k+1}$. It is easy to construct a (somewhat artificial) example where this turns out to be an advantage and where accordingly Sequential Importance Sampling outperforms Sequential MCMC. This is done in the following. The notation of the previous example is retained unless otherwise noted.\bigskip

Consider the sequence of state spaces $I_0=\{0_0\}$ and $I_k=\{0_k, 1_k\}$ for $k=1,\ldots, 3$. Define a sequence of probability measures $\mu_k$ on $I_k$ through $\mu_0(0_0)=1$, $\{\mu_1(0_1),\mu_1(1_1)\}=\{\mu_3(0_3),\mu_3(1_3)\}=\{\frac{1}2,\frac{1}2\}$ and $\{\mu_2(0_2),\mu_2(1_2)\} =\{\alpha,1-\alpha\}$ for some  $\alpha \in (0,1)$. The tree structure is given by $p_k(0_{k+1})=0_k$, $p_0(1_1)=0_0$ and, for $k>0$, $p_k(1_{k+1})=1_k$. This implies that 
\[
K_{1}(0_0, 0_1)=K_{1}(0_0, 1_1)=\frac{1}2,
\]
while all other transition kernels are trivial, i.e., for $k>1$ and $j\in \{0,1\}$,  $K_{k}(j_k, j_{k+1})=1$.\bigskip

We first consider the approximation error of Sequential Importance Sampling. The Importance Sampling proposal distribution $\pi_3$  coincides with $\mu_3$. Thus from Lemma \ref{SISerror} we obtain the following: For $f=1_{\{0_3\}}$
\begin{equation}\label{errSIS2}
\mathbb{E}[|\widetilde{\eta}_3(f)-\mu_3(f)|^2]= \frac{\mu_3(0_3)^2}{N}\left( \frac{1}{\pi_3(0_3)}-1\right)  =\frac{1}{4 N}
\end{equation}

Observe that this error is independent of $\alpha$. When moving from $\mu_1$ to $\mu_2$, the weights are changed, but this change is removed when moving (back) to $\mu_3$ and throughout the particles' movements are unaffected. So to say, the particles ``accidentally'' do the right thing when moving from $\mu_0$ to $\mu_1$. To see this, we replace $\mu_1$ by $\mu_1'$ which is essentially the same as $\mu_2$,
$\{\mu_1'(0_1),\mu_1'(1_1)\}=\{\alpha,1-\alpha\}$. Intuitively, this might make the problem easier, because it leads to a ``smoother'' sequence $\mu_k$. Yet the opposite is the case since the proposal distribution $\pi_3'$ is now given by $\pi_3'(0_3)=\alpha$ and $\pi_3'(1_3)=1-\alpha$. Accordingly we get the error bound
\[
\mathbb{E}[|\widetilde{\eta}_3(f)-\mu_3(f)|^2]=\frac{1}{4 N} \left(\frac{1}{\alpha}-1\right),
\]
which gets arbitrarily large for small $\alpha$.\bigskip

Now we consider the asymptotic variance of Sequential MCMC for the same example, again with the original $\mu_1$ and with the test function $f=1_{\{0_3\}}$. We thus have to evaluate
\[
\text{\textnormal{Var}}^{\text{as}}_3(f) =\sum_{j=0}^3 \text{\textnormal{Var}}_{\mu_j}(q_{j,3}(f)).
\]
Using the formula (\ref{qjkTrees1}) for $q_{j,k}(f)$ it is straightforward to calculate that
\begin{eqnarray}
q_{0,3}(1_{\{0_3\}})=\frac12,&\text{    }&q_{1,3}(1_{\{0_3\}})=1_{\{0_1\}},\nonumber\\
q_{2,3}(1_{\{0_3\}})=\frac1{2\alpha}1_{\{0_2\}} &\text{  and  }&q_{3,3}(1_{\{0_3\}})=1_{\{0_3\}}\nonumber.
\end{eqnarray}
Accordingly, we have $\text{\textnormal{Var}}_{\mu_0}(q_{0,3}(f))=0$,
\begin{eqnarray}
\text{\textnormal{Var}}_{\mu_1}(q_{1,3}(f))=\text{\textnormal{Var}}_{\mu_3}(q_{3,3}(f))=\frac14\nonumber
\end{eqnarray}
and
\begin{eqnarray}
\text{\textnormal{Var}}_{\mu_2}(q_{2,3}(f))=\frac14\left( \frac1\alpha-1\right)\nonumber.
\end{eqnarray}
Thus the asymptotic variance is given by
\[
\text{\textnormal{Var}}^{\text{as}}_3(f) =\frac14\left( \frac1\alpha+1\right).
\]
Recall that the asymptotic variance also coincides with the coefficient of the leading term in our error bound of Theorem \ref{thmBound2}. Thus we observe that the approximation error gets arbitrarily large for small values of $\alpha$. This is in contrast to the error (\ref{errSIS2}) of Sequential Importance Sampling for the same example which is independent of $\alpha$.\bigskip

Changing $\mu_1$ to $\mu_1'$ with $\mu_1'(0_1)=\alpha$ and $\mu_1'(1_1)=1-\alpha$ does not lead to a qualitative change of the error bound. We then get
\[q'_{1,3}(1_{\{0_3\}})=\frac1{2\alpha} 1_{\{0_1\}}\;\;\text{   and   }\;\; \text{\textnormal{Var}}_{\mu_1'}(q'_{1,3}(f))= \frac14\left( \frac1\alpha+1\right), \]
which leads to an asymptotic variance of
\[
\text{\textnormal{Var}}^{\text{as}}_3\,'(f) =\frac14\left( \frac2\alpha-1\right).
\]
Observe that again -- despite the fact that the sequence $\mu_0,\mu_1,\mu_2,\mu_3$ varies more strongly than $\mu_0,\mu_1',\mu_2,\mu_3$ -- the asymptotic variance for small values of $\alpha$ is larger under the second sequence than under the first sequence. The reason for this lies in the fact $\mu_1$ is a better approximation of $\mu_3^{\shortrightarrow 1}$ than $\mu_1'$.\bigskip

For $\alpha>\frac12$, the asymptotic variance under $\mu_1'$ is smaller than the one under $\mu_1$ and both are well-behaved. But in this case the asymptotic variance for $f'=1_{\{1_3\}}$ increases more quickly under $\mu_1'$ than under $\mu_1$ as $\alpha$ approaches $1$. In this sense, the asymptotic variance is more stable under $\mu_1$ than under $\mu_1'$.\bigskip

We thus close our comparison of Sequential Importance Sampling and Sequential MCMC on trees with the following conclusion: Sequential Importance Sampling works well if the proposal distribution $\pi_n$ constructed from $\mu_0$ and the transition kernels $K_k$ is sufficiently close to the target distribution $\mu_n$. Sequential MCMC works well if the distributions $\mu_j$ are sufficiently close to the projected distributions $\mu_n^{\shortrightarrow j}$. While there is no obvious relationship between these two properties, it seems clear that Sequential MCMC is more suited to applications where the relative densities $g_{k,k+1}$ play a significant role. Furthermore for both algorithms it is easy to construct examples where they perform arbitrarily bad. Finally note that for the last example we only considered the asymptotic variance of Sequential MCMC. In order to obtain good constants in our error bounds, we also need that $\mu_j$ is sufficiently close to $\mu_k^{\shortrightarrow j}$ for $j<k<n$.

\section{$L_p$-bounds under Local Mixing}\label{LPlocal}

In this section we consider a more standard Sequential MCMC framework where the distributions $\mu_k$ all live on the same state space and where the transition kernels $K_k$ represent (many steps of an) MCMC dynamics with target distributions $\mu_k$. We assume that the MCMC dynamics mix well only \textit{within} the elements of increasingly finer partitions of the state space. These partitions take the role of the tree structure of Section \ref{seqSMCtrees}. Section \ref{modellocal} introduces the setting and connects it to the error bound of Theorem \ref{thmBound}. Section \ref{secStabFKPL} derives stability of the Feynman-Kac propagator $q_{j,k}$. Unlike in Section \ref{seqSMCtrees}, we explicitly take into account the behavior of the dynamics within modes in this section. For this reason, we need two types of additional assumptions: a uniform upper bound on relative densities and sufficiently good mixing within modes.

\subsection{The Model}\label{modellocal}

We return to the setting of Section \ref{Preliminaries} and make a number of additional assumptions: We assume that all the distributions $\mu_k$ live on the same state space $E_k=E$. We assume that the kernel $K_k$ is stationary with respect to $\mu_k$. Accordingly, we assume that the functions $g_{k,k+1}\in B(E)$ are unnormalized relative denisties between $\mu_k$ and $\mu_{k+1}$.\bigskip  

In place of the tree structure of Section \ref{seqSMCtrees} we now introduce a sequence of partitions of $E$. Let $I_0$,...,$I_n$ be a collection of finite index sets. Define $I=I_0 \cup \ldots\cup I_n$ and for $0 \leq k \leq n$
\[
I_{> k}=I_{k+1} \cup \ldots\cup I_n \;\text{  and   } \; I_{< k}=I_0 \cup \ldots\cup I_{k-1}.
\]
For all $j\in I$ there is a set $F_j\in \mathcal{B}(E)$ with $\mu_0(F_j)>0$. Moreover, we assume that for all $0\leq k \leq n$ the collection $(F_j)_{j\in I_k}$ is a disjoint partition of $E$. We assume that partitions successively get finer. For $1\leq k\leq n$, assume that for all $j \in I_k$ there exists an $i\in I_{k-1}$ with $F_j \subseteq F_i$. Thus for $0\leq k\leq n-1$, a well-defined predecessor function $p_k:I_{>k}\rightarrow I_k$ is characterized as follows: For $1\leq k<l\leq n$, $j\in I_k$ and $i \in I_l$ define
\[
p_k(i)=j\;\;\;\; \text{if} \;\;\;F_i \subseteq F_j. 
\]
Conversely, define a successor function $s_k:I_{<k}\rightarrow \mathcal{P}(I_k)$ via
\[
s_k(i)=\{j\in I_k| p_l(j)=i
\}\text{  for  }i \in I_l \text{ with }0\leq l<k.
\]
Thus, for $l<k$ and $i \in I_l$, the collection $(I_j)_{j\in s_k(i)}$ is a disjoint partition of $F_i$. We add the simplifying assumption that particles move between partition elements only through the resampling step.
\begin{ass}\label{assdisc}
For $1 \leq k \leq n$ and $j\in I_k$ let $K_k (1_{F_j})(x)=0$ for all  $ x\in E\setminus F_j$.
\end{ass}
This assumption ensures that if $f$ has support only in $F_j$, $j\in I_k$, then $K_k(f)$ has support only in $F_j$ as well. While this technical assumption will not be literally fulfilled in most applications of interest, it can be seen as an approximation of the fact that particles will move between different modes only extremely rarely through the MCMC dynamics.\bigskip

In order to apply the error bound of Theorem \ref{thmBound} we need to introduce a sequence of norms on $E$. Unlike in the analysis under global mixing in \cite{Schw12} we rely only on local mixing properties. Thus we replace the $L_p$-norms of \cite{Schw12} by stronger norms, which are composed of local $L_p$-norms. To introduce these norms we need a few additional definitions. For $0\leq k \leq n$ and $j\in I$, denote by $\mu_{k,j}\in M_1(E)$ the restriction of $\mu_k$ to $F_j$: For $f\in B(E)$,
\[
\mu_{k,j}(f)=\frac{\mu_k(f1_{F_j})}{\mu_k(F_j)}.
\]
It proves to be convenient to view $\mu_{k,j}$ as a probability distribution on $E$ (and not on $F_j$). Note that we define $\mu_{k,j}$ for all $j\in I$ (and not only for $j\in I_k$). Furthermore, by Assumption \ref{assdisc}, $K_k$ is stationary with respect to $\mu_{k,j}$ for all $j\in I_k$. Now for $0\leq k \leq n$, $j\in I$ and $p \geq 1$, denote by $\|\cdot\|_{k,j,p}$ the $L_p$-norm with respect to $\mu_{k,j}$: For $f\in B(E)$,
\[
\|f\|_{k,j,p}=\mu_{k,j}(|f|^p)^{\frac1p}.
\]
Next define the norm $\|\cdot\|_{k,p}$ to be the maximum over the $L_p$-norms with respect to $\mu_{k,j}$ with $j\in I_k$:
For $f\in B(E)$ and $0\leq k \leq n$,
\[
\|f\|_{k,p}=\max_{j\in I_{k}} \|f\|_{k,j,p}.
\]
With this choice of norm we have
\[
\|f\|_{L_p(\mu_k)} \leq \|f\|_{k,p}.
\]

Now define $\widetilde{c}_{j,k}(p,q)$ to be the constant in an $L_p$-$L_q$-bound for $q_{j,k}$: For $p>  q> 1$ and $0\leq j< k\leq n$ we have
\[
\| q_{j,k}(f)\|_{j,p} \leq
\widetilde{c}_{j,k}(p,q)
 \| f\|_{k,q}\;\; \text{ for all }\;\; f\in B(E).
\]
The following proposition shows how the quantities in the error bound of Theorem \ref{thmBound} can be controlled in terms of the constants $\widetilde{c}_{j,k}(p,q)$. Accordingly, Section \ref{secStabFKPL} is devoted to studying the constants $\widetilde{c}_{j,k}(p,q)$.

\begin{prop}\label{propCon}
Fix $p>2$ and define
\[
c_{j,k}(p)=\max\left(\widetilde{c}_{j,k}\left(p,\frac{p}{2}\right),\;\;\widetilde{c}_{j,k}(2p,p)^2\right).
\]
This choice of $c_{j,k}$ satisfies (\ref{cjninequality}), i.e., for $p>2$, $0\leq j< k\leq n$ and $f\in B(E)$ we have
\[
\max\left(\|1\|_{j,p} \|q_{j,k}(f)^2\|_{j,p},\|q_{j,k}(f)\|_{j,p}^2,\|q_{j,k}(f^2)\|_{j,p} \right)  \leq c_{j,k}(p)\;\|f\|_{k,p}^2.
\]
Moreover
\[
\text{Var}_{\mu_j}(q_{j,k}(f)) \leq  \widetilde{c}_{j,k}(2,2) \|f\|_{k,2}^2
\]
and
\[
\|q_{k,k+1}(1)-1\|_{k,p} \leq \sup_{x\in E} |g_{k,k+1}(x)-1|.
\]
\end{prop}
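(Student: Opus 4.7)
The proposition is essentially a bookkeeping exercise, and my plan is to reduce each of the three displayed bounds to a single application of an $L_p$-$L_q$ bound of the form $\|q_{j,k}(f)\|_{j,p} \leq \widetilde{c}_{j,k}(p,q)\|f\|_{k,q}$, combined with two elementary identities that follow immediately from the definition of the local norms: $\|f^2\|_{k,q/2} = \|f\|_{k,q}^2$, and $\|f\|_{k,r} \leq \|f\|_{k,s}$ for $r \leq s$, the latter being Jensen's inequality applied to each $\mu_{k,i}$ and then maximized over $i \in I_k$.

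For the first displayed inequality, I would treat the three quantities inside the maximum separately. Using $\|1\|_{j,p}=1$ (since every $\mu_{j,l}$ is a probability measure) and $\|q_{j,k}(f)^2\|_{j,p} = \|q_{j,k}(f)\|_{j,2p}^2$, the first term is at most $\widetilde{c}_{j,k}(2p,p)^2 \|f\|_{k,p}^2$ by the $L_{2p}$-$L_p$ estimate. The second term $\|q_{j,k}(f)\|_{j,p}^2$ is dominated by $\|q_{j,k}(f)\|_{j,2p}^2$ via Jensen and hence also by $\widetilde{c}_{j,k}(2p,p)^2 \|f\|_{k,p}^2$. For the third term, the $L_p$-$L_{p/2}$ estimate gives $\|q_{j,k}(f^2)\|_{j,p} \leq \widetilde{c}_{j,k}(p,p/2)\|f^2\|_{k,p/2} = \widetilde{c}_{j,k}(p,p/2)\|f\|_{k,p}^2$. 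Taking the maximum of these two kinds of bounds recovers exactly the constant $c_{j,k}(p)$.

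For the variance inequality, I would decompose $\mu_j = \sum_{l\in I_j}\mu_j(F_l)\mu_{j,l}$ to obtain
\[
\text{Var}_{\mu_j}(q_{j,k}(f)) \;\leq\; \mu_j(q_{j,k}(f)^2) \;\leq\; \max_{l\in I_j}\mu_{j,l}(q_{j,k}(f)^2) \;=\; \|q_{j,k}(f)\|_{j,2}^2
\]
and apply the $L_2$-$L_2$ bound, extending the notation $\widetilde{c}_{j,k}(2,2)$ to the diagonal case $p=q$ (in line with the squared term $\widetilde{c}_{j,k}(2p,p)^2$ appearing in the definition of $c_{j,k}(p)$). For the last inequality, I would use the explicit formula $q_{k,k+1}(1)(x) = g_{k,k+1}(x)/\mu_k(g_{k,k+1})$ together with the standard normalization $\mu_k(g_{k,k+1}) = 1$ (consistent with $g_{k,k+1}$ being the Radon--Nikodym density $d\mu_{k+1}/d\mu_k$) to reduce $q_{k,k+1}(1) - 1$ to $g_{k,k+1} - 1$; the $L_p$-norm of a bounded function with respect to the probability measures $\mu_{k,j}$ is dominated by its supremum, and the claim follows.

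There is no genuine obstacle in this proof; the whole statement is a routine unpacking of definitions once the local $L_p$-$L_q$ formalism is in place. All the substantive work in controlling the particle system under local mixing assumptions is deferred to Section \ref{secStabFKPL}, where one must actually estimate the constants $\widetilde{c}_{j,k}(p,q)$ using upper bounds on relative densities together with local hyperboundedness and local Poincar\'e-type inequalities for the MCMC kernels $K_k$.
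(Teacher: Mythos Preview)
Your proposal is correct and follows essentially the same route as the paper: the same three-term breakdown using $\|1\|_{j,p}=1$, $\|q_{j,k}(f)^2\|_{j,p}=\|q_{j,k}(f)\|_{j,2p}^2$, and $\|f^2\|_{k,p/2}=\|f\|_{k,p}^2$; the same variance bound via $\|\cdot\|_{L_2(\mu_j)}\le\|\cdot\|_{j,2}$; and the same reduction of $q_{k,k+1}(1)-1$ to $g_{k,k+1}-1$ under the normalization $\mu_k(g_{k,k+1})=1$, which the paper uses implicitly when it writes $q_{k,k+1}(1)=g_{k,k+1}$. Your remarks about extending $\widetilde{c}_{j,k}$ to the diagonal $(2,2)$ case and about the normalization are accurate observations that the paper glosses over.
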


Upper bounds on the quantities $\widehat{v}_k$ and $\overline{v}_k$ from Theorem \ref{thmBound} follow immediately from the bound on $\text{Var}_{\mu_j}(q_{j,k}(f))$ and from $\|f\|_{k,2} \leq \|f\|_{k,p}$. 

\begin{proof}[Proof of Proposition \ref{propCon}]
We have $\|1\|_{j,p}=1$,
\begin{eqnarray}
\|q_{j,k}(f)\|_{j,p}^2 \leq \|q_{j,k}(f)^2\|_{j,p} =\|q_{j,k}(f)\|_{j,2p}^2 \leq \widetilde{c}_{j,k}(2p,p)^2 \|f\|_{k,p}^2
\nonumber
\end{eqnarray}
and
\[
\|q_{j,k}(f^2)\|_{j,p} \leq \widetilde{c}_{j,k}\left(p,\frac{p}{2}\right) \|f^2\|_{k,\frac{p}{2}}=\widetilde{c}_{j,k}\left(p,\frac{p}{2}\right) \|f\|_{k,p}^2.
\]
This shows that (\ref{cjninequality}) is indeed satisfied with constants $c_{j,k}(p)$. The upper bound on $\text{Var}_{\mu_j}(q_{j,k}(f))$ follows from 
\[
\text{Var}_{\mu_j}(q_{j,k}(f)) \leq \|q_{j,k}(f)\|_{L_2(\mu_j) }^2 \leq \|q_{j,k}(f)\|_{j,2}^2 \leq \widetilde{c}_{j,k}(2,2) \|f\|_{k,2}^2.
\]
The upper bound on $\|q_{k,k+1}(1)-1\|_{k,p}$ follows immediately from $q_{k,k+1}(1)=g_{k,k+1}$ and the definition of $\|\cdot\|_{k,p}$.
\end{proof}

\subsection{Stability of Feynman-Kac Propagators under Local Mixing}\label{secStabFKPL}

We begin with a few more definitions. For $j\in I$, let $m_{k,k+1}(j)$ be the relative change in the mass of $F_j$ between $\mu_k$ and $\mu_{k+1}$,
\[
m_{k,k+1}(j)=\frac{\mu_{k+1}(F_j)}{\mu_{k}(F_j)}.
\]
Furthermore, for $0 \leq k \leq n-1$, denote by $\overline{g}_{k,k+1}$ the normalized relative density between $\mu_k$ and $\mu_{k+1}$,
\[
\overline{g}_{k,k+1}(x)=\frac{g_{k,k+1}(x)}{\mu_k(g_{k,k+1})}, \;\;\;\;\text{ for }\;\; x\in E.
\]
Next we define restricted relative densities: For $0 \leq k \leq n-1$, $j\in I$ and $x\in E$,
\[
\overline{g}_{k,k+1,j}(x)= \frac{1}{m_{k,k+1}(j)}\overline{g}_{k,k+1}(x) \; 1_{F_j}(x).
\]
Observe that with this choice of $\overline{g}_{k,k+1,j}$ we have for $f\in B(E)$, $0 \leq k \leq n-1$ and $j\in I$ that
\[
\mu_{k+1,j}(f)=\frac{\mu_{k+1}(f 1_{F_j})}{\mu_{k+1}(F_j)}
=\frac{1}{m_{k,k+1}(j)}\frac{\mu_{k}(f \overline{g}_{k,k+1} 1_{F_j})}{\mu_{k}(F_j)}
=\mu_{k,j}(\overline{g}_{k,k+1,j} f),
\]
i.e., $\overline{g}_{k,k+1,j}$ is a relative density between $\mu_{k,j}$ and $\mu_{k+1,j}$.\bigskip

We assume a uniform upper bound on restricted relative densities. 
\begin{ass}\label{assbd}
There exists $\gamma > 1$ such that for every $0\leq k \leq n-1$, every $j\in I_k$ and every $x\in F_j$
\[
\overline{g}_{k,k+1,j}(x)= \frac{\mu_{k}(F_j)}{\mu_{k+1}(F_j)}\overline{g}_{k,k+1}(x)  \leq \gamma.
\]
\end{ass}

Note that in the extreme case, where $\overline{g}_{k,k+1}$ is constant on each component $F_j$ with $j\in I_k$, we can choose $\gamma=1$. This extreme case corresponds roughly to what we assumed in Section  \ref{seqSMCtrees}. \bigskip

It proves to be convenient not to work with $q_{j,k}$ directly but to work with  $\hat{q}_{j,k}$ defined as follows: For $1 \leq k \leq n-1$ define  $\hat{q}_{k,k+1}:B(E)\rightarrow B(E)$ by
\[
\hat{q}_{k,k+1}(f)=K_{k}\left(\overline{g}_{k,k+1} f \right). 
\]
Furthermore, for $1 \leq j < k \leq n$ the mapping $\hat{q}_{j,k}:B(E)\rightarrow B(E)$ is given by 
\[
\hat{q}_{j,k}(f)=\hat{q}_{j,j+1}(\hat{q}_{j+1,j+2}(\ldots \hat{q}_{k-1,k}(f)))\;\;\;\text{and}\;\;\;\hat{q}_{k,k}(f)=f.
\]
$q_{j,k}$ and $\hat{q}_{j+1,k}$ are related through
\[
q_{j,k}(f)=\overline{g}_{j,j+1}\hat{q}_{j+1,k}(K_{k}(f)).
\]
In Lemma \ref{lemqhatql} below, we show how $L_p$-$L_q$-bounds for $\hat{q}_{j,k}$ can be used to obtain $L_p$-$L_q$-bounds for $q_{j,k}$.\bigskip

We proceed by considering first $L_2$-bounds for one time-step and then iterated $L_2$-bounds. From these we conclude one-step $L_p$-bounds and then, in Proposition \ref{propLpItL}, iterated $L_p$-bounds for $\hat{q}_{j,k}$. Afterwards, we show how to extend this result to $L_p$-$L_q$-bounds, using local hyperboundedness, and to the original family of operators $q_{j,k}$. Proposition \ref{corGesamtL} concludes the bound for $q_{j,k}$ needed in order to make the constants in the error bound of Theorem \ref{thmBound} explicit.\bigskip

We consider mostly inequalities which bound $\|\hat{q}_{j,k}(f)\|_{j,i,p}$ against $\max_{l\in s_k(i)}\,\|f \|_{k,l,p}$, for $i\in I_j$. The inequalities which bound $\|\hat{q}_{j,k}(f)\|_{j,p}$ against $\,\|f \|_{k,p}$ can then be concluded by taking the maximum over $i\in I_j$. So to say, the latter inequalities are the final results while the former are more useful tools in proving further results.\bigskip 

In order to keep track of how mass is shifted between different components, two more definitions are needed. For $0 \leq j <k\leq n$ and $i\in I_j$ define by $M_{j,k}(i)$ the following iterated version of $m_{j,j+1} (i)$:
\[
M_{j,k}(i) =\max_{l\in s_k(i)} \prod_{r=j}^{k-1}m_{r,r+1}(p_r(l)).
\]
This is the maximal product of relative mass changes one has to go through when moving from $F_i$, $i\in I_j$ to one of its successors $F_l$, $l\in s_k(i)\subseteq I_k$. For the transition from $r$ to $r+1$ the relative mass change of the predecessor of $F_l$ at level $r$ is taken into account. Observe that for $i\in I_j$ we have the relation 
\begin{equation}\label{Mjkrecursion}
M_{j,k}(i)=m_{j,j+1}(i)\max_{l\in s_{j+1}(i)}M_{j+1,k}(l).
\end{equation}
Furthermore, we define for $0 \leq j <k\leq n$ the constant $A_{j,k}$ by
\[
A_{j,k}=\max_{i\in I_j} M_{j,k}(i).
\]

Before we come to local mixing properties and $L_p$-bounds, we briefly look at the $L_1$-case.

\begin{lem}\label{lemL1}
For $0\leq j<k\leq n$, $f\in B(E)$ and $i\in I_j$ we have
\begin{equation}\label{itL1}
\|\hat{q}_{j,k}(f)\|_{j,i,1} \leq M_{j,k}(i) \max_{l\in s_{k}(i)} \|f\|_{k,l,1}.
\end{equation}
Moreover,
\begin{equation}\label{itL1Max}
\|\hat{q}_{j,k}(f)\|_{j,1} \leq A_{j,k} \|f\|_{k,1}.
\end{equation}
\end{lem}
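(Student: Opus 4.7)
The plan is to prove (\ref{itL1}) by downward induction on $j$ with $k$ fixed, then deduce (\ref{itL1Max}) by a single maximization. The base case is $j = k-1$; the inductive step uses the semigroup identity $\hat{q}_{j,k} = \hat{q}_{j,j+1} \circ \hat{q}_{j+1,k}$ to reduce to one application of the base case plus the hypothesis.

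For the base case, fix $i \in I_{k-1}$, write $\hat{q}_{k-1,k}(f) = K_{k-1}(\overline{g}_{k-1,k} f)$, and apply the triangle inequality inside the $\mu_{k-1,i}$-integral. The argument then chains three ingredients: (a) Assumption \ref{assdisc}, which makes $K_{k-1}$ stationary with respect to $\mu_{k-1,i}$ (as noted directly below that assumption), so the $K_{k-1}$ disappears; (b) the restricted-density identity $\mu_{k-1,i}(\overline{g}_{k-1,k,i}\, h) = \mu_{k,i}(h)$ together with $\overline{g}_{k-1,k}\,1_{F_i} = m_{k-1,k}(i)\,\overline{g}_{k-1,k,i}$, which produces the factor $m_{k-1,k}(i) = M_{k-1,k}(i)$; and (c) the convex decomposition
\[
\mu_{k,i}(|f|) = \sum_{l \in s_k(i)} \frac{\mu_k(F_l)}{\mu_k(F_i)}\, \mu_{k,l}(|f|) \leq \max_{l \in s_k(i)} \|f\|_{k,l,1}.
\]

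For the inductive step, suppose (\ref{itL1}) holds at level $j+1$ and let $i \in I_j$. Applying the one-step bound (with the base-case argument shifted by $k-1 \mapsto j$) to $\hat{q}_{j,j+1}(\hat{q}_{j+1,k}(f))$ yields
\[
\|\hat{q}_{j,k}(f)\|_{j,i,1} \leq m_{j,j+1}(i) \max_{l \in s_{j+1}(i)} \|\hat{q}_{j+1,k}(f)\|_{j+1,l,1}.
\]
The induction hypothesis bounds each term on the right by $M_{j+1,k}(l) \max_{l' \in s_k(l)} \|f\|_{k,l',1}$. Because $s_k(l) \subseteq s_k(i)$ for every $l \in s_{j+1}(i)$, the inner maximum is dominated by $\max_{l' \in s_k(i)} \|f\|_{k,l',1}$, which can be pulled outside the outer maximum. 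What remains multiplying it is $m_{j,j+1}(i) \max_{l \in s_{j+1}(i)} M_{j+1,k}(l)$, which equals $M_{j,k}(i)$ by (\ref{Mjkrecursion}). Finally, (\ref{itL1Max}) is immediate: take $\max_{i \in I_j}$ in (\ref{itL1}) and bound $\max_{l \in s_k(i)} \|f\|_{k,l,1} \leq \max_{l \in I_k} \|f\|_{k,l,1} = \|f\|_{k,1}$.

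The main obstacle is purely bookkeeping: one must keep careful track of which index set each sub- or superscript lives in, and in particular exploit that for $i \in I_j$ the restricted measure $\mu_{j+1,i}$ decomposes along the finer partition indexed by $s_{j+1}(i) \subseteq I_{j+1}$. Beyond the $\mu_{k,j}$-stationarity provided by Assumption \ref{assdisc}, the restricted-density identity, and the recursion for $M_{j,k}$, no new analytic input is required.
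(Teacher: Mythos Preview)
Your argument is correct and matches the paper's proof in substance. The paper establishes the one-step recursion $\|\hat{q}_{j,k}(f)\|_{j,i,1} \leq m_{j,j+1}(i)\max_{l\in s_{j+1}(i)}\|\hat{q}_{j+1,k}(f)\|_{j+1,l,1}$ directly (exactly your ingredients (a)--(c) applied at level $j$) and then unrolls it explicitly, identifying the resulting product with $M_{j,k}(i)$; you package the same recursion as a downward induction with base case $j=k-1$ and invoke (\ref{Mjkrecursion}) in the inductive step, which is the same proof in slightly different dress.
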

\begin{proof}
We can write
\begin{eqnarray}\label{l1bdstep}
\|\hat{q}_{j,k}(f)\|_{j,i,1} &=&\mu_{j,i}(|K_j(\overline{g}_{j,j+1}\hat{q}_{j+1,k}(f))|)\nonumber\\
&\leq& m_{j,j+1}(i) \mu_{j,i}(\overline{g}_{j,j+1,i}|\hat{q}_{j+1,k}(f)|)\nonumber\\
&\leq&  m_{j,j+1}(i) \max_{l\in s_{j+1}(i)} \mu_{j+1,l}(|\hat{q}_{j+1,k}(f)|)\nonumber\\
&\leq&  m_{j,j+1}(i) \max_{l\in s_{j+1}(i)} \|\hat{q}_{j+1,k}(f)\|_{j+1,l,1}.
\end{eqnarray}
Iterating this bound yields
\begin{eqnarray}
\|\hat{q}_{j,k}(f)\|_{j,i,1}
\leq  m_{j,j+1}(i) \max_{l_{j+1}\in s_{j+1}(i)}  m_{j+1,j+2}(l_{j+1}) \ldots 
\max_{l_{k-1}\in s_{k-1}(l_{k-2})} m_{k-1,k}(l_{k-1}) \|f\|_{k,l_{k-1},1}.\nonumber
\end{eqnarray}
Note that by iterating (\ref{Mjkrecursion}) we obtain 
\begin{eqnarray}
M_{j,k}(i)= m_{j,j+1}(i) \max_{l_{j+1}\in s_{j+1}(i)}  m_{j+1,j+2}(l_{j+1}) \ldots 
\max_{l_{k-1}\in s_{k-1}(l_{k-2})} m_{k-1,k}(l_{k-1}).\nonumber
\end{eqnarray} 
Thus applying 
\[
\|f\|_{k,l_{k-1},1} \leq \max_{l\in s_k(i)} \|f\|_{k,l,1}
\]
in (\ref{l1bdstep}) yields  (\ref{itL1}). Taking the maximum over $i\in I_j$ gives (\ref{itL1Max}).
\end{proof}

The proof illustrates how the constants $M_{j,k}(i)$ and $A_{j,k}$ come into play in our bounds.  The same arguments appear -- in less detail and alongside further complications -- in our proofs for $p>1$. In fact, this didactic purpose is the main motivation behind Lemma \ref{lemL1}. Using Jensen's inequality one can easily show that for all $i\in I$
\[
\|\hat{q}_{j,k}(f)\|_{j,i,1} \leq \frac{\mu_k(F_i)}{\mu_j(F_i)} \|f\|_{k,i,1}.
\]
This implies
\begin{equation}\label{AhutL1}
\|\hat{q}_{j,k}(f)\|_{j,1} \leq \left( \max_{i\in I_j} \frac{\mu_k(F_i)}{\mu_j(F_i)} \right) \|f\|_{k,1},
\end{equation}
which is generally an improvement over Lemma \ref{lemL1}.\bigskip

We now state the local mixing conditions behind our $L_p$-bounds for the case $p\geq 2$.
\begin{ass}\label{asslocmixing}
We have uniform constants $\alpha>0$ and $\beta \in [0,1]$ such that for all $1\leq k < n$, for all $f\in B(E)$ and for all $i\in I_k$
\begin{equation}\label{locmixing}
\|\hat{q}_{k,k+1}(f)\|_{k,i,2}^2 \leq m_{k,k+1}(i)^2\left(\alpha \|f\|_{k+1,i,2}^2+\beta \mu_{k+1,i}(f)^2 \right).
\end{equation}
\end{ass}

One way to ensure that (\ref{locmixing}) holds is to assume that the kernels $K_k$ possess the following contraction property: There exists $\rho\in (0,1)$ such that for all $1\leq k < n$, for all $f\in B(E)$ and for all $i\in I_k$ 
\begin{equation}\label{PoincLocal}
\mu_{k,i}(K_k(f-\mu_{k,i}(f))^2) \leq (1-\rho) \text{\textnormal{Var}}_{\mu_{k,i}}(f).
\end{equation}
Then it can be shown that (\ref{locmixing}) holds with $\alpha=(1-\rho)\gamma$ and $\beta=\rho$. Moreover, (\ref{PoincLocal}) holding with a sufficiently large $\rho$ is equivalent to a local Poincaré inequality with a sufficiently large spectral gap being satisfied, see Section 5.1 of \cite{Schw12} for easily adaptable arguments under global mixing assumptions. Intuitively, a smaller value of $\alpha$ corresponds to better mixing and thus more MCMC steps. Assumption \ref{asslocmixing} immediately implies the following one-step $L_2$-bound for $\hat{q}_{j,k}$.

\begin{cor}
For $1\leq k < n$, for all $f\in B(E)$ and for all $i\in I_k$ we have
\begin{eqnarray}\label{onestepL2}
\|\hat{q}_{k,k+1}(f)\|_{k,i,2}^2 \leq m_{k,k+1}(i)^2 \left(\alpha \left(\max_{l\in s_{k+1}(i)} \|f\|_{k+1,l,2}^2\right)+\beta\left(\max_{l\in s_{k+1}(i)} \mu_{k+1,l}(f)^2 \right)\right).\nonumber\\
\end{eqnarray}
and
\begin{eqnarray}
\|\hat{q}_{k,k+1}(f)\|_{k,2}^2 \leq A_{k,k+1}^2\left(\alpha \|f\|_{k+1,2}^2+\beta \max_{l\in I_{k+1}}  \mu_{k+1,l}(f)^2 \right)
\leq A_{k,k+1}^2(\alpha+\beta) \|f\|_{k+1,2}^2.\nonumber
\end{eqnarray}
\end{cor}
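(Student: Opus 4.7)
The plan is to start from Assumption \ref{asslocmixing} and refine its right-hand side by passing from the local $L_2$ norm with respect to $\mu_{k+1,i}$ (based on the coarser partition element $F_i$ with $i \in I_k$) to a maximum over norms with respect to $\mu_{k+1,l}$ for $l \in s_{k+1}(i)$ (based on the finer sub-partition). The key geometric observation is that, since the partitions get finer, we have the disjoint decomposition $F_i = \bigsqcup_{l \in s_{k+1}(i)} F_l$. Setting $w_l = \mu_{k+1}(F_l)/\mu_{k+1}(F_i)$ for $l \in s_{k+1}(i)$ gives a probability vector, and writing
\[
\mu_{k+1,i}(f^2) = \sum_{l \in s_{k+1}(i)} w_l \,\mu_{k+1,l}(f^2), \qquad \mu_{k+1,i}(f) = \sum_{l \in s_{k+1}(i)} w_l \,\mu_{k+1,l}(f),
\]
I can bound the first by $\max_{l \in s_{k+1}(i)} \|f\|_{k+1,l,2}^2$ and, applying Jensen's inequality (convexity of $x \mapsto x^2$) to the second, bound $\mu_{k+1,i}(f)^2 \leq \sum_l w_l\,\mu_{k+1,l}(f)^2 \leq \max_{l \in s_{k+1}(i)} \mu_{k+1,l}(f)^2$. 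Substituting both into Assumption \ref{asslocmixing} yields the first displayed inequality.

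For the second inequality, I take the maximum over $i \in I_k$ on both sides. Since $\{s_{k+1}(i)\}_{i \in I_k}$ is a partition of $I_{k+1}$, one has $\max_{i \in I_k} \max_{l \in s_{k+1}(i)} = \max_{l \in I_{k+1}}$, so
\[
\max_{i \in I_k} \max_{l \in s_{k+1}(i)} \|f\|_{k+1,l,2}^2 = \|f\|_{k+1,2}^2,
\]
and similarly for the $\mu_{k+1,l}(f)^2$ terms. Next I observe that by the definition of $M_{k,k+1}$ the factor $m_{k,k+1}(i)$ coincides with $M_{k,k+1}(i)$ (since $p_k(l) = i$ for every $l \in s_{k+1}(i)$), so $\max_{i\in I_k} m_{k,k+1}(i) = A_{k,k+1}$. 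Pulling this prefactor out gives the middle expression in the second displayed bound.

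The final inequality in the chain is immediate: by the Cauchy--Schwarz (or Jensen) inequality applied to each $\mu_{k+1,l}$,
\[
\mu_{k+1,l}(f)^2 \leq \mu_{k+1,l}(f^2) = \|f\|_{k+1,l,2}^2 \leq \|f\|_{k+1,2}^2,
\]
so $\max_{l \in I_{k+1}} \mu_{k+1,l}(f)^2 \leq \|f\|_{k+1,2}^2$, and the factor $\alpha + \beta$ appears. There is no essential technical obstacle here; the whole statement is an unpacking of definitions combined with Jensen's inequality, and the only point requiring a moment's care is the bookkeeping step identifying $\max_{i \in I_k} m_{k,k+1}(i)$ with $A_{k,k+1}$ using the definition of $M_{k,k+1}$ together with $p_k(l) = i$ for $l \in s_{k+1}(i)$.
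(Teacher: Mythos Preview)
Your proof is correct and matches the paper's approach: the paper simply states that the corollary follows immediately from Assumption~\ref{asslocmixing}, and what you have written is exactly the routine verification behind that claim (decomposing $\mu_{k+1,i}$ as a convex combination of the $\mu_{k+1,l}$ over $l\in s_{k+1}(i)$, applying Jensen, and identifying $A_{k,k+1}=\max_{i\in I_k} m_{k,k+1}(i)$).
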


Next we iterate (\ref{onestepL2}) to obtain an $L_2$-bound for more than one step.

\begin{lem}\label{lemL2ItL}
Assume $\alpha<1$. Then for $1\leq j<k \leq n$, $f\in B(E)$ and $i\in I_j$ we have the bounds
\begin{equation}\label{L2it1l}
\|\hat{q}_{j,k}(f)\|_{j,i,2}^2 \leq M_{j,k}(i)^2\left(\alpha^{k-j} \left(  \max_{l\in s_{k}(i)}  \|f\|_{k,l,2}^2 \right) + \frac{\beta}{1-\alpha} \left( \max_{l\in s_{k}(i)} \mu_{k,l}(f)^2 \right)\right),
\end{equation}
and
\begin{equation}\label{L2it12l}
\|\hat{q}_{j,k}(f)\|_{j,i,2} \leq M_{j,k}(i) \frac{1}{(1-\alpha)^{\frac12}} \max_{l\in s_{k}(i)} \|f\|_{k,l,2},
\end{equation}
and
\begin{equation}\label{L2it3l}
\|\hat{q}_{j,k}(f)\|_{j,2} \leq A_{j,k} \frac{1}{(1-\alpha)^{\frac12}}  \|f\|_{k,2}.
\end{equation}
\end{lem}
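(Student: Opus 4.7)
The plan is to induct on $k-j$, applying the one-step bound (\ref{onestepL2}) to the decomposition $\hat{q}_{j,k}(f)=\hat{q}_{j,j+1}(\hat{q}_{j+1,k}(f))$, and to prove (\ref{L2it1l}) and (\ref{L2it12l}) by two parallel (but independent) inductions. The base case $k=j+1$ is immediate from (\ref{onestepL2}) since $M_{j,j+1}(i)=m_{j,j+1}(i)$, while $\beta\leq\beta/(1-\alpha)$ covers (\ref{L2it1l}) and $(\alpha+\beta)(1-\alpha)\leq(1+\alpha)(1-\alpha)\leq 1$ (using $\beta\leq 1$) covers (\ref{L2it12l}). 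The bound (\ref{L2it3l}) then follows from (\ref{L2it12l}) by taking the maximum over $i\in I_j$, using $A_{j,k}=\max_i M_{j,k}(i)$ and $\max_{l\in s_k(i)}\|f\|_{k,l,2}\leq\|f\|_{k,2}$.

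The delicate point in the inductive step is controlling the cross term $\mu_{j+1,l}(\hat q_{j+1,k}(f))^2$. Assumption \ref{assdisc} implies that for any $l\in I_{j'}$ with $j'\leq r$ the kernel $K_r$ preserves $F_l$, hence $\mu_{r,l}$ is $K_r$-stationary. Iterating the one-step identity $\mu_{r,l}(\hat q_{r,r+1}(g))=\mu_{r+1}(g\,1_{F_l})/\mu_r(F_l)$, which combines this stationarity with $d\mu_{r+1}/d\mu_r=\overline g_{r,r+1}$, for $r=j+1,\dots,k-1$ yields
\[
\mu_{j+1,l}\bigl(\hat q_{j+1,k}(f)\bigr)=\frac{\mu_k(f\,1_{F_l})}{\mu_{j+1}(F_l)}=\frac{\mu_k(F_l)}{\mu_{j+1}(F_l)}\,\mu_{k,l}(f).
\]
Combined with Jensen on the convex combination $\mu_{k,l}=\sum_{l'\in s_k(l)}\tfrac{\mu_k(F_{l'})}{\mu_k(F_l)}\mu_{k,l'}$, which gives $\mu_{k,l}(f)^2\leq\max_{l'\in s_k(l)}\mu_{k,l'}(f)^2$, this bounds the cross term, after passing to the max over $l\in s_{j+1}(i)$, by $\bigl(\max_{l\in s_{j+1}(i)}\tfrac{\mu_k(F_l)}{\mu_{j+1}(F_l)}\bigr)^2 B(i)$, where $B(i)=\max_{l'\in s_k(i)}\mu_{k,l'}(f)^2$.

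The main obstacle is the measure-theoretic comparison
\[
\frac{\mu_k(F_l)}{\mu_{j+1}(F_l)}\leq M_{j+1,k}(l)\qquad\text{for }l\in I_{j+1},
\]
without which the cross term cannot be absorbed into $M_{j,k}(i)^2=m_{j,j+1}(i)^2(\max_{l}M_{j+1,k}(l))^2$. I would establish it by a short separate induction on $k-(j+1)$: the base case is a tautology, with both sides equal to $m_{j+1,j+2}(l)$; for the step, decompose $\mu_k(F_l)=\sum_{l''\in s_{j+2}(l)}\mu_k(F_{l''})$, apply the inductive hypothesis to obtain $\mu_k(F_{l''})\leq M_{j+2,k}(l'')\,\mu_{j+2}(F_{l''})$ for each $l''$, pull the max over $l''$ out of the sum, telescope $\sum_{l''\in s_{j+2}(l)}\mu_{j+2}(F_{l''})=\mu_{j+2}(F_l)$ to produce $m_{j+1,j+2}(l)\max_{l''}M_{j+2,k}(l'')$, and recognize this as $M_{j+1,k}(l)$ via the recursion (\ref{Mjkrecursion}).

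With this inequality in hand, the inductive step for (\ref{L2it1l}) combines the $\alpha$-term coming from the inductive hypothesis $M_{j+1,k}(l)^2[\alpha^{k-j-1}A(l)+\tfrac{\beta}{1-\alpha}B(l)]$ (with $A(l)=\max_{l'\in s_k(l)}\|f\|_{k,l',2}^2$) with the just-bounded $\beta$-term, and closes through the algebraic identity $\tfrac{\alpha\beta}{1-\alpha}+\beta=\tfrac{\beta}{1-\alpha}$. For (\ref{L2it12l}) the same one-step expansion is used, but the $\beta$-term is estimated via the further bound $\mu_{k,l}(f)^2\leq\|f\|_{k,l,2}^2\leq\max_{l'\in s_k(l)}\|f\|_{k,l',2}^2$; the resulting coefficient $\tfrac{\alpha}{1-\alpha}+\beta=\tfrac{\alpha+\beta(1-\alpha)}{1-\alpha}$ is bounded by $\tfrac{1}{1-\alpha}$ because $\beta\leq 1$ implies $\alpha+\beta(1-\alpha)\leq 1$.
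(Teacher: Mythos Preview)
Your proof is correct and follows essentially the same route as the paper. Both arguments apply the one-step bound (\ref{onestepL2}), control the cross term $\mu_{j+1,l}(\hat q_{j+1,k}(f))^2$ by an $M_{j+1,k}$-factor times $\max_{l'\in s_k(i)}\mu_{k,l'}(f)^2$, and then iterate; the paper simply writes ``arguing as in the proof of Lemma~\ref{lemL1}'' for this cross-term estimate, whereas you spell it out via the exact identity $\mu_{j+1,l}(\hat q_{j+1,k}(f))=\tfrac{\mu_k(F_l)}{\mu_{j+1}(F_l)}\mu_{k,l}(f)$ together with the auxiliary inequality $\tfrac{\mu_k(F_l)}{\mu_{j+1}(F_l)}\le M_{j+1,k}(l)$. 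The only organizational difference is that the paper unrolls the recursion into the explicit sum (\ref{L2prg1}) and reads off both (\ref{L2it1l}) and (\ref{L2it12l}) from it, while you run two separate inductions; both are valid and yield the same constants.
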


\begin{proof}
Applying (\ref{onestepL2}) yields 
\begin{eqnarray}\label{firsstep}
 \|\hat{q}_{j,k}(f)\|_{j,i,2}^2 
 \leq m_{j,j+1}(i)^2
  \left(\alpha \max_{l\in s_{j+1}(i)}\|\hat{q}_{j+1,k}(f)\|_{j+1,l,2}^2+\beta \max_{l\in s_{j+1}(i)} \mu_{j+1,l}(\hat{q}_{j+1,k}(f))^2 \right).
 \end{eqnarray}
Arguing as in the proof of Lemma \ref{lemL1} yields the inequality
\begin{equation}\label{L1ittt}
\max_{l\in s_{j+1}(i)} \mu_{j+1,l}(\hat{q}_{j+1,k}(f))^2 \leq M_{j+1,k}(i)^2 \max_{l\in s_{k}(i)} \mu_{k,l}(f)^2,
\end{equation}
which can be used to bound the second term on the right hand side of (\ref{firsstep}). To the first term in (\ref{firsstep}) we can apply again (\ref{onestepL2}) which yields again two terms, one which can be bounded through (\ref{onestepL2}) and one which can be bounded through (\ref{L1ittt}). Iterating this reasoning and collecting the factors $m_{r,r+1}$ into terms $M_{j,k}$ gives us
\begin{equation}\label{L2prg1}
\|\hat{q}_{j,k}(f)\|_{j,i,2}^2 \leq M_{j,k}(i)^2 \left(\alpha^{k-j}  \left(  \max_{l\in s_{k}(i)}  \|f\|_{k,l,2}^2 \right) + \beta\sum_{r=0}^{k-j-1} \alpha^r \max_{l\in s_{k}(i)} \mu_{k,l}(f)^2 \right).
\end{equation}
Applying to this the geometric series inequality yields (\ref{L2it1l}). Since $\mu_{k,l}(f)^2 \leq \|f\|_{k,l,2}^2$ for all $l\in s_{k}(i)$ and since  $\beta \leq 1$, we can conclude from (\ref{L2prg1}) that
\[
\|\hat{q}_{j,k}(f)\|_{j,i,2}^2 \leq M_{j,k}(i)^2  \sum_{r=0}^{k-j} \alpha^r \max_{l\in s_{k}(i)}  \|f\|_{k,l,2}^2,
\]
which implies (\ref{L2it12l}) by the geometric series inequality. Taking the maximum over $i\in I_j$ in (\ref{L2it12l}) gives (\ref{L2it3l}).
\end{proof}

Our next step is the following one-step $L_p$-bound.

\begin{lem}\label{onestepLpLem}
For $1\leq k < n$, for all $f\in B(E)$, for all $i\in I_k$ and for all $p\geq 1$ we have
\begin{eqnarray}\label{onestepLp}
\|\hat{q}_{k,k+1}(f)\|_{k,i,2p}^{2p}\leq m_{k,k+1}(i)^{2p} \gamma^{2p-2} \left(\alpha \left(\max_{l\in s_{k+1}(i)} \|f\|_{k+1,l,2p}^{2p}\right)+\beta \left(\max_{l\in s_{k+1}(i)} \|f\|_{k+1,l,p}^{2p}\right) \right)\nonumber\\
\end{eqnarray}
and
\begin{eqnarray}\label{onestepLpA}
\|\hat{q}_{k,k+1}(f)\|_{k,2p}^{2p} &\leq& A_{k,k+1}^{2p}\gamma^{2p-2}\left(\alpha \|f\|_{k+1,2p}^{2p}+ \beta  \|f\|_{k+1,p}^{2p} \right)\nonumber\\
&\leq& A_{k,k+1}^{2p}\gamma^{2p-2} (\alpha+\beta)  \|f\|_{k+1,2p}^{2p}.
\end{eqnarray}
\end{lem}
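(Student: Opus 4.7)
The plan is to bootstrap the $L_2$-bound of Assumption \ref{asslocmixing} into an $L_{2p}$-bound by applying it to $|f|^p$ in place of $f$. The bridge between $\|\hat{q}_{k,k+1}(f)\|_{k,i,2p}^{2p}$ and $\|\hat{q}_{k,k+1}(|f|^p)\|_{k,i,2}^{2}$ will be a pointwise Jensen/Hölder inequality that uses the uniform bound $\overline{g}_{k,k+1,i}\leq\gamma$ from Assumption \ref{assbd} and absorbs a factor of $\gamma^{2p-2}$.

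First I would fix $i\in I_k$ and use Assumption \ref{assdisc} together with the identity $\overline{g}_{k,k+1}=m_{k,k+1}(i)\,\overline{g}_{k,k+1,i}$ on $F_i$ to rewrite, for $x\in F_i$,
\[
\hat{q}_{k,k+1}(f)(x)=m_{k,k+1}(i)\,K_k\bigl(\overline{g}_{k,k+1,i}\,f\bigr)(x),
\]
so that $|\hat{q}_{k,k+1}(f)(x)|^{2p}\leq m_{k,k+1}(i)^{2p}\,K_k(\overline{g}_{k,k+1,i}|f|)(x)^{2p}$. The key step is then to apply Hölder to the probability kernel $K_k(x,\cdot)$ via the factorisation $\overline{g}_{k,k+1,i}|f|=\overline{g}_{k,k+1,i}^{(p-1)/p}\cdot(\overline{g}_{k,k+1,i}|f|^p)^{1/p}$ (with conjugate exponents $p/(p-1)$ and $p$), giving
\[
K_k(\overline{g}_{k,k+1,i}|f|)(x)^{p}\leq K_k(\overline{g}_{k,k+1,i})(x)^{p-1}\cdot K_k(\overline{g}_{k,k+1,i}|f|^p)(x)\leq \gamma^{p-1}K_k(\overline{g}_{k,k+1,i}|f|^p)(x),
\]
where the last step uses $\overline{g}_{k,k+1,i}\leq\gamma$ (Assumption \ref{assbd}) and the fact that by Assumption \ref{assdisc}, $K_k(x,\cdot)$ is supported in $F_i$ for $x\in F_i$. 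Squaring and integrating against $\mu_{k,i}$ identifies the resulting right-hand side, up to the factor $m_{k,k+1}(i)^{2p-2}\gamma^{2p-2}$, with $\|\hat{q}_{k,k+1}(|f|^p)\|_{k,i,2}^{2}$, to which Assumption \ref{asslocmixing} applied to $|f|^p$ yields
\[
\|\hat{q}_{k,k+1}(|f|^p)\|_{k,i,2}^{2}\leq m_{k,k+1}(i)^{2}\Bigl(\alpha\|f\|_{k+1,i,2p}^{2p}+\beta\|f\|_{k+1,i,p}^{2p}\Bigr).
\]
Combining gives the desired one-step bound with $\mu_{k+1,i}$-norms in place of the successor maxima.

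Finally, I would convert the $L_q$-norm against $\mu_{k+1,i}$ to the maximum over $l\in s_{k+1}(i)$ by noting that $F_i=\bigsqcup_{l\in s_{k+1}(i)}F_l$, so $\mu_{k+1,i}(|f|^q)$ is a convex combination of $\mu_{k+1,l}(|f|^q)$ over $l\in s_{k+1}(i)$ with weights $\mu_{k+1}(F_l)/\mu_{k+1}(F_i)$; this immediately yields (\ref{onestepLp}). For (\ref{onestepLpA}), taking the maximum over $i\in I_k$ and observing that $M_{k,k+1}(i)=m_{k,k+1}(i)$ so $A_{k,k+1}=\max_i m_{k,k+1}(i)$, and that $(s_{k+1}(i))_{i\in I_k}$ partitions $I_{k+1}$, produces the first inequality; the second follows from Jensen's inequality $\|f\|_{k+1,p}\leq\|f\|_{k+1,2p}$. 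The main obstacle is getting the Hölder decomposition right in Step 2: the exponents must be chosen so that after raising to the $p$th power exactly one factor carries $\overline{g}_{k,k+1,i}|f|^p$ (to match $\hat{q}_{k,k+1}(|f|^p)$), while the complementary factor collapses to the prefactor $\gamma^{p-1}$; any other split either loses track of the $L_{2p}$-structure on $f$ or fails to produce the clean $\gamma^{2p-2}$ dependence.
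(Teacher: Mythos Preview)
Your argument is correct and follows essentially the same route as the paper. The only tactical difference is the order of operations: the paper applies Jensen's inequality $|K_k(h)|^p\leq K_k(|h|^p)$ with $h=\overline{g}_{k,k+1}f$ to obtain $\|\hat{q}_{k,k+1}(f)\|_{k,i,2p}^{2p}\leq \|\hat{q}_{k,k+1}(\overline{g}_{k,k+1}^{\,p-1}|f|^p)\|_{k,i,2}^2$, then invokes Assumption~\ref{asslocmixing} with the function $\overline{g}_{k,k+1}^{\,p-1}|f|^p$ and extracts the factor $m_{k,k+1}(i)^{2p-2}\gamma^{2p-2}$ afterwards from the resulting $\|\cdot\|_{k+1,i,2}$ and $\|\cdot\|_{k+1,i,1}$ norms. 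You instead pull out $m_{k,k+1}(i)$ first, use a H\"older split to extract $\gamma^{p-1}$ before integrating, and then apply Assumption~\ref{asslocmixing} directly to $|f|^p$. Both orderings yield the same bound; the paper's version is marginally shorter since plain Jensen avoids the explicit H\"older factorisation (and also handles $p=1$ without a separate remark).
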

\begin{proof}
We can write 
\begin{eqnarray}
\|\hat{q}_{k,k+1}(f)\|_{k,i,2p}^{2p} &=& \mu_{k,i}(|K_k(\overline{g}_{k,k+1}f)|^{2p})\nonumber\\
&\leq& \mu_{k,i}(K_k(\overline{g}_{k,k+1}^p |f|^p )^2)\nonumber\\
&=& \|\hat{q}_{k,k+1}(\overline{g}_{k,k+1}^{p-1} |f|^p )  \|^2_{k,i,2}.\nonumber
\end{eqnarray}
Using (\ref{locmixing}), we can bound this expression  to obtain
\begin{eqnarray}
\|\hat{q}_{k,k+1}(f)\|_{k,i,2p}^{2p} &\leq& m_{k,k+1}(i)^2 \left( \alpha \|\overline{g}_{k,k+1}^{p-1}|f|^p\|^2_{k+1,i,2}
+\beta \|\overline{g}_{k,k+1}^{p-1}|f|^p \|_{k+1,i,1}^2 \right)\nonumber\\
&\leq& m_{k,k+1}(i)^{2p} \gamma^{2p-2} \left( \alpha \| |f|^p\|_{k+1,i,2}^2+\beta \| |f|^p\|_{k+1,i,1}^2\right)\nonumber\\
&\leq& m_{k,k+1}(i)^{2p} \gamma^{2p-2} \left( \alpha \| f \|_{k+1,i,2p}^{2p}+\beta \| f\|_{k+1,i,p}^{2p}\right),\nonumber
\end{eqnarray}
which immediately implies (\ref{onestepLp}). (\ref{onestepLpA}) follows by taking the maximum over $i\in I_k$.
\end{proof}

Next we iterate the bound of Lemma \ref{onestepLpLem} to show how an $L_p$-bound for $\hat{q}_{j,k}$ implies an $L_{2p}$-bound:

\begin{lem}\label{jkstepLpp2p}
Assume that $\alpha \gamma^{2p-2}<1$ and that for some $\delta(p)\geq 1$, for all $1\leq j<k\leq n$, $i\in I_j$ and $f\in B(E)$ the inequality
\begin{equation}\label{IVLpLoc}
\|\hat{q}_{j,k}(f)\|_{j,i,p} \leq M_{j,k}(i) \delta(p) \max_{l\in s_k(i)} \| f \|_{k,l,p}
\end{equation}
is fulfilled. Then we have
\begin{equation}\label{LpLocI}
\|\hat{q}_{j,k}(f)\|_{j,i,2p} \leq M_{j,k}(i) \delta(2p) \max_{l\in s_k(i)} \| f \|_{k,l,2p}
\end{equation}
with
\[
\delta(2p)= \delta(p) \frac{\gamma^{1-\frac1p}}{(1-\alpha \gamma^{2p-2})^{\frac{1}{2p}}}\;.
\]
Moreover, we have
\begin{equation}\label{LpLocII}
\|\hat{q}_{j,k}(f)\|_{j,2p} \leq A_{j,k} \delta(2p)  \| f \|_{k,2p}.
\end{equation}
\end{lem}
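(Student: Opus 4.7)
The plan is to imitate the proof of Lemma \ref{lemL2ItL}, iterating the one-step $L_{2p}$-bound (\ref{onestepLp}) from Lemma \ref{onestepLpLem} while cutting off the $\beta$-branches through the induction hypothesis (\ref{IVLpLoc}).

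More precisely, I would start from
\[
\|\hat{q}_{j,k}(f)\|_{j,i,2p}^{2p} = \|\hat{q}_{j,j+1}(\hat{q}_{j+1,k}(f))\|_{j,i,2p}^{2p}
\]
and apply (\ref{onestepLp}) to peel off the outermost $\hat{q}_{j,j+1}$. This produces two terms: an $\alpha$-branch involving the $L_{2p}$-norm of $\hat{q}_{j+1,k}(f)$ and a $\beta$-branch involving its $L_p$-norm. The $\beta$-branch can be bounded immediately by the hypothesis (\ref{IVLpLoc}); the $\alpha$-branch is further expanded by reapplying (\ref{onestepLp}). Iterating $k-j$ times leaves a single "pure $\alpha$" branch terminating in $\max_l \|f\|_{k,l,2p}^{2p}$ and a $\beta$-branch at each intermediate level to which the induction hypothesis is applied.

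The bookkeeping has two ingredients. For the mass factors, each step produces an $m_{r,r+1}^{2p}$; when the $\beta$-branch is reached at level $j+r$ and the hypothesis contributes $M_{j+r,k}(l)^{2p}$, the product telescopes by iterating the recursion (\ref{Mjkrecursion}) into $M_{j,k}(i)^{2p}$, just as in the proof of Lemma \ref{lemL1}. For the scalar factors, each application of (\ref{onestepLp}) contributes $\gamma^{2p-2}$ and either $\alpha$ or $\beta$, so the $\beta$-branch at level $j+r$ carries a factor $(\gamma^{2p-2})^r \alpha^{r-1}\beta\,\delta(p)^{2p}$, while the pure $\alpha$-branch carries $(\gamma^{2p-2})^{k-j}\alpha^{k-j} = (\alpha')^{k-j}$ with $\alpha'=\alpha\gamma^{2p-2}$. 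Collecting everything yields
\[
\|\hat{q}_{j,k}(f)\|_{j,i,2p}^{2p}\leq M_{j,k}(i)^{2p}\left[(\alpha')^{k-j}\max_l\|f\|_{k,l,2p}^{2p}+\beta\gamma^{2p-2}\delta(p)^{2p}\sum_{r=0}^{k-j-1}(\alpha')^r\max_l\|f\|_{k,l,p}^{2p}\right].
\]

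From here the clean bound drops out by routine estimates: use $\|f\|_{k,l,p}\leq\|f\|_{k,l,2p}$ to unify the two maxima; use $\beta\leq 1$, $\gamma\geq 1$, $\delta(p)\geq 1$ to absorb the $(\alpha')^{k-j}$ term into the geometric series; and bound the latter by $1/(1-\alpha')$, which is legal by the standing assumption $\alpha'<1$. Taking $2p$-th roots gives (\ref{LpLocI}) with $\delta(2p)=\delta(p)\gamma^{1-1/p}/(1-\alpha')^{1/(2p)}$, and maximizing over $i\in I_j$ yields (\ref{LpLocII}). The main obstacle is purely bookkeeping: keeping the nested maxima, the telescoping of the $m$-factors, and the correct exponents of $\gamma$ and $\alpha$ straight through $k-j$ iterations; the underlying mechanism is exactly the one already displayed in the $L_2$ analog (Lemma \ref{lemL2ItL}).
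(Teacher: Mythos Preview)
Your proposal is correct and follows essentially the same route as the paper's proof: iterate the one-step $L_{2p}$-bound (\ref{onestepLp}), bound each $\beta$-branch via the induction hypothesis (\ref{IVLpLoc}), telescope the mass factors into $M_{j,k}(i)^{2p}$, and sum the resulting geometric series in $\alpha'=\alpha\gamma^{2p-2}$. The only cosmetic difference is that the paper introduces an auxiliary quantity $R_{j,r}(l)=\prod_{t=j}^{r-1} m_{t,t+1}(p_t(l))$ and the identity $M_{j,k}(i)=\max_{l\in s_r(i)} R_{j,r}(l)M_{r,k}(l)$ to carry out the telescoping, whereas you invoke the recursion (\ref{Mjkrecursion}) directly in the spirit of Lemma \ref{lemL1}; these are the same bookkeeping device expressed in two ways.
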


\begin{proof}
Define $\theta=\alpha \gamma^{2p-2}$. Iterating the inequality of Lemma \ref{onestepLpLem} and utilizing that $\beta \leq 1$, we get
\begin{eqnarray}\label{firsttttstep}
\|\hat{q}_{j,k}(f)\|_{j,i,2p}^{2p} \leq M_{j,k}(i)^{2p} \theta^{k-j} \left( \max_{l\in s_k(i)} \| f \|_{k,l,2p}^{2p}\right) 
+ \gamma^{2p-2}\sum_{r=j+1}^k \theta^{r-1-j} \max_{l\in s_r(i)} R_{j,r}(l)^{2p} \|\hat{q}_{r,k}(f)\|_{r,l,p}^{2p},\nonumber\\
\end{eqnarray}
where for $l\in I_r$, $R_{j,r}(l)$ is defined by
\[
R_{j,r}(l)=\prod_{t=j}^{r-1} m_{t,t+1}(p_t(l)).
\]
Observe that for $i\in I_j$ we have
\[
M_{j,k}(i)=\max_{l\in s_k(i)} R_{j,k}(l)
\]
and moreover for $j<r<k$ and $i\in I_j$
\begin{equation}\label{MRcombine}
M_{j,k}(i)= \max_{l\in s_r(i)} R_{j,r}(l) M_{r,k} (l)
\end{equation}
Thus applying (\ref{IVLpLoc}) and then (\ref{MRcombine}) to bound the factors $\|\hat{q}_{r,k}(f)\|_{r,l,p}$, we obtain from (\ref{firsttttstep}) the inequality
\begin{eqnarray}
\|\hat{q}_{j,k}(f)\|_{j,i,2p}^{2p} \leq M_{j,k}(i)^{2p} \theta^{k-j} \left( \max_{l\in s_k(i)} \| f \|_{k,l,2p}^{2p}\right)+ \gamma^{2p-2}  M_{j,k}(i)^{2p} \delta(p)^{2p} \sum_{r=j+1}^k \theta^{r-1-j} \max_{l\in s_k(i)} \| f \|_{k,l,p}^{2p}.\nonumber
\end{eqnarray}
Since we assumed $\gamma \geq 1$ and $\delta(p)\geq 1$ and since we have 
\[
\max_{l\in s_k(i)} \| f \|_{k,l,p}^{2p} \leq \max_{l\in s_k(i)} \| f \|_{k,l,2p}^{2p},
\]
we thus have
\[
\|\hat{q}_{j,k}(f)\|_{j,i,2p}^{2p} \leq  \gamma^{2p-2}  M_{j,k}(i)^{2p} \delta(p)^{2p} \left( \max_{l\in s_k(i)} \| f \|_{k,l,p}^{2p}\right) \sum_{r=j+1}^k \theta^{r-1-j} .
\]
By the geometric series inequality and our assumption of $\theta <1$, we thus get
\[
\|\hat{q}_{j,k}(f)\|_{j,i,2p} \leq M_{j,k}(i)^{2p} \delta(2p) \max_{l\in s_k(i)} \| f \|_{k,l,p} , 
\]
with
\[
\delta(2p)= \delta(p) \frac{\gamma^{1-\frac1p}}{(1-\theta)^{\frac{1}{2p}}}.
\]
This shows (\ref{LpLocI}). (\ref{LpLocII}) follows by taking the maximum over $i\in I_j$.
\end{proof}

Combining Lemmas \ref{lemL2ItL} and \ref{jkstepLpp2p} we can now state the key result of this section as follows.

\begin{prop}\label{propLpItL}
For $r\in \mathbb{N}$, consider $p=2^r$ and assume that $\alpha \gamma^{p-2}<1$. Then we have for $1\leq j< k \leq n$, $i\in I_j$ and $f\in B(E)$ the inequality
\begin{equation}\label{LpItLL}
\|\hat{q}_{j,k}(f)\|_{j,i,p} \leq  M_{j,k}(i)\delta(p) \max_{l \in s_k(i)}\|f\|_{k,l,p}, 
\end{equation}
with
\[
\delta(p)=\prod_{j=1}^r \frac{\gamma^{1-2^{-(j-1)}}}{(1-\alpha \gamma^{2^j-2})^{2^{-j}}}< \frac{\gamma^{r-2+2^{-(r-1)}}}{1-\alpha\gamma^{2^r-2}}.
\]
Moreover,
\begin{equation}\label{LpItLLA}
\|\hat{q}_{j,k}(f)\|_{j,p} \leq  A_{j,k}\delta(p) \|f\|_{k,p}. 
\end{equation}
\end{prop}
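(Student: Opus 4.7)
The plan is a straightforward induction on $r$, invoking Lemma \ref{jkstepLpp2p} to pass from $L_p$ to $L_{2p}$ at each step, with the base case handled by Lemma \ref{lemL2ItL}.

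For the base case $r=1$, so $p=2$, the assumption $\alpha\gamma^{p-2}=\alpha\gamma^0<1$ reduces to $\alpha<1$, which is precisely the hypothesis of Lemma \ref{lemL2ItL}, and (\ref{LpItLL}) is then literally the content of (\ref{L2it12l}). The constant $(1-\alpha)^{-1/2}$ appearing there agrees with the single-factor product defining $\delta(2)=\gamma^{1-2^0}/(1-\alpha\gamma^0)^{1/2}=(1-\alpha)^{-1/2}$. For the inductive step, suppose (\ref{LpItLL}) has been established for $p=2^r$; to deduce the corresponding bound at the level $r+1$ it suffices to apply Lemma \ref{jkstepLpp2p}. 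Its two hypotheses are easy to check: the requirement $\delta(p)\ge 1$ holds because each factor $\gamma^{1-2^{-(j-1)}}/(1-\alpha\gamma^{2^j-2})^{2^{-j}}$ is at least $1$ (since $\gamma\ge 1$ and $1-\alpha\gamma^{2^j-2}\in(0,1)$), while the requirement $\alpha\gamma^{2p-2}<1$ is exactly the hypothesis of the proposition at the level $r+1$. Lemma \ref{jkstepLpp2p} then yields (\ref{LpItLL}) at level $r+1$ with the updated constant
\[
\delta(2p)=\delta(p)\,\frac{\gamma^{1-1/p}}{(1-\alpha\gamma^{2p-2})^{1/(2p)}},
\]
which is precisely what appending the $(r+1)$-th factor to the product in the statement produces. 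Finally, (\ref{LpItLLA}) follows from (\ref{LpItLL}) by taking the maximum over $i\in I_j$, exactly as in the corresponding step of Lemma \ref{jkstepLpp2p}.

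The only nonroutine part is verifying the closed-form upper bound $\delta(p)<\gamma^{r-2+2^{-(r-1)}}/(1-\alpha\gamma^{2^r-2})$. For the numerator, the geometric sum gives $\sum_{j=1}^r(1-2^{-(j-1)})=r-(2-2^{-(r-1)})=r-2+2^{-(r-1)}$. For the denominator, since $\gamma\ge 1$ the quantity $1-\alpha\gamma^{2^j-2}$ is nonincreasing in $j$, so $1-\alpha\gamma^{2^j-2}\ge 1-\alpha\gamma^{2^r-2}$ for $j\le r$; and since the right-hand side lies in $(0,1)$ and $\sum_{j=1}^r 2^{-j}=1-2^{-r}<1$, raising to a subunitary power increases it, yielding
\[
\prod_{j=1}^r(1-\alpha\gamma^{2^j-2})^{2^{-j}}\ge (1-\alpha\gamma^{2^r-2})^{1-2^{-r}}> 1-\alpha\gamma^{2^r-2}.
\]
Combining these observations gives the stated strict inequality. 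The induction itself is essentially bookkeeping; the real content is already packaged in Lemmas \ref{lemL2ItL} and \ref{jkstepLpp2p}, and the only mildly subtle point is tracking the accumulated constants in the telescoping product.
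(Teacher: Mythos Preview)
Your proof is correct and follows essentially the same route as the paper: induction on $r$ with the base case supplied by Lemma \ref{lemL2ItL} and the inductive step by Lemma \ref{jkstepLpp2p}, followed by taking the maximum over $i\in I_j$ to obtain (\ref{LpItLLA}). Two small remarks: the paper also records the case $r=0$ via Lemma \ref{lemL1} (empty product, $\delta(1)=1$), and it phrases the monotonicity point as ``$\alpha\gamma^{p-2}<1$ implies $\alpha\gamma^{q-2}<1$ for $q\le p$'' to make explicit that the induction hypothesis at level $r$ is available when proving level $r+1$; you use this implicitly. Your treatment of the closed-form bound on $\delta(p)$ is in fact more detailed than the paper's.
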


\begin{proof}
We proceed by induction over $r$. The cases $r=0$ and $r=1$ follow from Lemmas \ref{lemL1} and \ref{lemL2ItL}, respectively. The inequalities for $r > 1$ follow because Lemma \ref{jkstepLpp2p} implies that we can choose
\[
\delta(2^r)=\delta(2) \prod_{j=2}^r \frac{\gamma^{1-2^{-(j-1)}}}{(1-\alpha \gamma^{2^j-2})^{2^{-j}}}.
\]
We can apply Lemma \ref{jkstepLpp2p} iteratively, since $\alpha \gamma^{p-2}<1$ implies $\alpha \gamma^{q-2}<1$, for all $q\leq p$.  For the upper bound on $\delta(p)$, we apply the geometric series equality in the nominator, bound the term in brackets under the exponent in the denominator by $1-\alpha \gamma^{p-2}$ and apply the geometric series inequality to the product. This shows (\ref{LpItLL}). (\ref{LpItLLA}) follows by taking the maximum over $i\in I_j$.
\end{proof}

Since the constants $\delta(2^r)$ are monotonically increasing in $r$, we can immediately extend the bounds of Proposition  \ref{propLpItL} to general $p\geq 1$ using the Riesz-Thorin interpolation theorem (see Davies \cite{DA90}, §1.1.5).

\begin{cor}\label{corLpL}
Consider $p\in [2^r,2^{r+1}]$ for $r\in \mathbb{N}$ and assume $\alpha \gamma^{2^{r+1}-2}<1$. Then for $1\leq j< k \leq n$ and $f\in B(E)$ and $i\in I_j$ we have
\[
\|\hat{q}_{j,k}(f)\|_{j,i,p} \leq  M_{j,k}(i)\delta(p) \max_{l \in s_k(i)}\|f\|_{k,l,p} 
\]
and
\[
\|\hat{q}_{j,k}(f)\|_{j,p} \leq  A_{j,k}\delta(p) \|f\|_{k,p} 
\]
with $\delta(p)=\delta(2^{r+1})$ and $\delta(2^{r+1})$ as defined as in Proposition $\ref{propLpItL}$.
\end{cor}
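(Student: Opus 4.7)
The plan is essentially the one-liner the author foreshadows: apply the Riesz--Thorin interpolation theorem between the two endpoint bounds that Proposition~\ref{propLpItL} supplies at $p_0 = 2^r$ and $p_1 = 2^{r+1}$, and then absorb the resulting geometric-mean constant into $\delta(2^{r+1})$ via monotonicity of $r \mapsto \delta(2^r)$. The first preliminary check is that both endpoints are actually available: the hypothesis $\alpha\gamma^{2^{r+1}-2} < 1$ gives the bound at $p_1$ directly, and a fortiori $\alpha\gamma^{2^r - 2} < 1$ (because $\gamma \geq 1$), so Proposition~\ref{propLpItL} also applies at $p_0$.

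The interpolation step itself I would set up as follows. For the uniform inequality, view $\hat{q}_{j,k}$ as a linear operator with endpoint norm bounds
\[
\|\hat{q}_{j,k}(f)\|_{j,p_a} \leq A_{j,k}\,\delta(p_a)\,\|f\|_{k,p_a}, \qquad a \in \{0,1\}.
\]
Because $\|\cdot\|_{k,p}$ and $\|\cdot\|_{j,p}$ are suprema of $L_p$-norms over finite index sets $I_k$ and $I_j$, the endpoint spaces fit into the vector-valued Riesz--Thorin framework cited by Davies, yielding
\[
\|\hat{q}_{j,k}(f)\|_{j,p} \leq A_{j,k}\,\delta(2^r)^{1-\theta}\,\delta(2^{r+1})^{\theta}\,\|f\|_{k,p}, \qquad \frac1p = \frac{1-\theta}{2^r} + \frac{\theta}{2^{r+1}}.
\]
The local version proceeds identically with $M_{j,k}(i)$ in place of $A_{j,k}$, with the outer supremum taken over the finite set $s_k(i)$ instead of $I_k$; one may additionally note that by Assumption~\ref{assdisc} the restriction $\hat{q}_{j,k}(f)\big|_{F_i}$ depends only on $f\big|_{F_i}$, which means the relevant operator really does live on the finite product indexed by $s_k(i)$.

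It remains to verify monotonicity of $r \mapsto \delta(2^r)$. From the product formula of Proposition~\ref{propLpItL}, each factor
\[
\frac{\gamma^{1-2^{-(j-1)}}}{(1-\alpha\gamma^{2^j-2})^{2^{-j}}}
\]
is at least $1$, since $\gamma \geq 1$ makes the numerator $\geq 1$ and the assumption $\alpha\gamma^{2^j-2}<1$ keeps the denominator in $(0,1)$. Hence $\delta(2^r) \leq \delta(2^{r+1})$, and therefore $\delta(2^r)^{1-\theta}\delta(2^{r+1})^\theta \leq \delta(2^{r+1})$, so the interpolation constant is dominated by $\delta(p) := \delta(2^{r+1})$ as claimed. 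The main (in fact, the only) obstacle is purely formal: justifying Riesz--Thorin in the mixed $\ell^\infty$-$L^p$ setting, and this is what the citation to Davies is designed to cover, exploiting the fact that the outer index sets are finite so the usual complex interpolation machinery goes through without modification.
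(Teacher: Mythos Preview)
Your proposal is correct and follows exactly the approach the paper sketches: Riesz--Thorin interpolation between the endpoints $2^r$ and $2^{r+1}$ supplied by Proposition~\ref{propLpItL}, together with the monotonicity of $r\mapsto\delta(2^r)$ to absorb the interpolated constant into $\delta(2^{r+1})$. You have in fact filled in more detail than the paper does (the explicit verification that each factor in the product for $\delta(2^r)$ is at least $1$, and the remark about the finite-index $\ell^\infty$--$L^p$ structure), but the argument is the same.
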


We still need two more results: one which shows how to translate $L_p$-stability into $L_p$-$L_q$-stability using local hyper-boundedness, and one which relates our bounds for $\hat{q}_{j+1,k}$ to corresponding bounds for  $q_{j,k}$. We first show how $L_p$-$L_q$-inequalities follow from our $L_p$-inequalities and a local hypercontractivity assumption on the kernels $K_j$.

\begin{ass}\label{hypercL}
For $1\leq j< n$, we have a constant $\theta_j(p,q)\geq 0$ such that for all $i\in I_j$ and all $f\in B(E)$ we have
\begin{equation}
\|K_{j}(f)\|_{j,i,p}\leq \theta_j(p,q) \|f\|_{j,i,q}. 
\end{equation}
\end{ass}

Adding this assumption for the remainder of the section, we obtain the following:

\begin{cor}\label{corHypqhatL}
Consider $p\geq 1$ and $q\geq 1$. Let $q\in [2^r,2^{r+1}]$ for $r\in \mathbb{N}$ and assume $\alpha \gamma^{2^{r+1}-2}<1$. Then for $j< k \leq n$, we have
\begin{equation}\label{hypercLI}
\|\hat{q}_{j,k}(f)\|_{j,i,p} \leq M_{j,k}(i) \theta_j(p,q) \gamma^{\frac{q-1}{q}} \delta(q) \max_{l\in s_k(i)} \|f\|_{k,l,q} 
\end{equation}
and
\begin{equation}\label{hypercLII}
\|\hat{q}_{j,k}(f)\|_{j,p} \leq A_{j,k} \theta_j(p,q) \gamma^{\frac{q-1}{q}} \delta(q) \|f\|_{k,q} 
\end{equation}
with $\delta(q)$ as defined in Corollary \ref{corLpL}. 
\end{cor}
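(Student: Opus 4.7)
The plan is to peel off one application of the kernel $K_j$ from $\hat{q}_{j,k}$, apply the local hyperboundedness of Assumption \ref{hypercL} there, and use Corollary \ref{corLpL} to handle the remaining $\hat{q}_{j+1,k}$ in $L_q$. Concretely, since $\hat{q}_{j,k}(f)=K_j(\overline{g}_{j,j+1}\,\hat{q}_{j+1,k}(f))$ (with $\hat{q}_{k,k}=\mathrm{id}$ taking care of the $k=j+1$ case), Assumption \ref{hypercL} immediately gives
\[
\|\hat{q}_{j,k}(f)\|_{j,i,p}\leq \theta_j(p,q)\,\|\overline{g}_{j,j+1}\,\hat{q}_{j+1,k}(f)\|_{j,i,q}.
\]

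Next I would convert the $L_q$-norm of $\overline{g}_{j,j+1}\,h$ under $\mu_{j,i}$ into an $L_q$-norm of $h$ under $\mu_{j+1,i}$. Since $\mu_{j,i}$ is supported on $F_i$, on which the restricted density $\overline{g}_{j,j+1,i}=\overline{g}_{j,j+1}/m_{j,j+1}(i)$ is bounded by $\gamma$ by Assumption \ref{assbd}, a direct computation using the identity $\mu_{j+1,i}(\cdot)=\mu_{j,i}(\overline{g}_{j,j+1,i}\,\cdot)$ yields
\[
\|\overline{g}_{j,j+1}\,h\|_{j,i,q}\leq m_{j,j+1}(i)\,\gamma^{(q-1)/q}\,\|h\|_{j+1,i,q}.
\]

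The next step requires some index bookkeeping: Corollary \ref{corLpL} applied to $\hat{q}_{j+1,k}$ gives bounds with respect to $\mu_{j+1,l}$ for $l\in I_{j+1}$, but I need a bound with respect to $\mu_{j+1,i}$ for $i\in I_j$. Writing $\mu_{j+1,i}$ as a convex combination of the $\mu_{j+1,l}$ with $l\in s_{j+1}(i)$ produces $\|h\|_{j+1,i,q}\leq \max_{l\in s_{j+1}(i)}\|h\|_{j+1,l,q}$; applying Corollary \ref{corLpL} to each such $l$ and using $s_k(l)\subseteq s_k(i)$ then gives
\[
\|\hat{q}_{j+1,k}(f)\|_{j+1,i,q}\leq \delta(q)\,\Bigl(\max_{l\in s_{j+1}(i)}M_{j+1,k}(l)\Bigr)\max_{m\in s_k(i)}\|f\|_{k,m,q}.
\]

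Assembling the three estimates and invoking the recursion $M_{j,k}(i)=m_{j,j+1}(i)\max_{l\in s_{j+1}(i)}M_{j+1,k}(l)$ from (\ref{Mjkrecursion}) collapses the product $m_{j,j+1}(i)\cdot\max_{l} M_{j+1,k}(l)$ into $M_{j,k}(i)$, yielding (\ref{hypercLI}); taking the maximum over $i\in I_j$ and using the definition of $A_{j,k}$ gives (\ref{hypercLII}). The main delicate point is the index-level mismatch — Corollary \ref{corLpL} naturally lives at level $j+1$ while the bound we want is indexed at level $j$ — which is resolved by the convex-combination step together with the compatibility of the successor sets across levels.
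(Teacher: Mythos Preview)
Your proposal is correct and follows essentially the same route as the paper: peel off one application of $K_j$ via Assumption~\ref{hypercL}, pass from $\mu_{j,i}$ to $\mu_{j+1,\cdot}$ using the bound $\overline{g}_{j,j+1,i}\leq\gamma$, apply Corollary~\ref{corLpL} at level $j+1$, and collapse the $m$-factors via (\ref{Mjkrecursion}). Your write-up is in fact more explicit than the paper's about the index-level mismatch and the convex-combination step, but the argument is the same.
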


\begin{proof}
By Assumption \ref{hypercL} we have
\[
\|\hat{q}_{j,k}(f)\|_{j,i,p}\leq \theta_j(p,q) m_{j,j+1}(i) \max_{l\in s_{j+1}(i)} \|\overline{g}_{j,j+1,i} \hat{q}_{j+1,k}(f)\|_{j,l,q} 
\]
and thus by Corollary $\ref{corLpL}$
\[
\|\hat{q}_{j,k}(f)\|_{j,i,p}\leq \theta_j(p,q) M_{j,k}(i) \gamma^{\frac{q-1}{q}} \delta(q) \max_{l\in s_{k}(i)} \|f\|_{k,l,q}. 
\]
This shows (\ref{hypercLI}). Taking the maximum over $i\in I_j$ proves (\ref{hypercLII}).
\end{proof}

Next, we show how to obtain bounds for $q_{j,k}$ from our bounds for $\hat{q}_{j+1,k}$.

\begin{lem}\label{lemqhatql}
Assume that for some $p\geq 1$ and $q\geq 1$ and for fixed $1\leq j+1< k\leq n$ we have a $\delta\geq 0$ such that for all $l\in I_{j+1}$ and for all $f\in B(E)$ 
\begin{equation}\label{assdeltaM}
\| \hat{q}_{j+1,k}(f) \|_{j+1,l,p} \leq \delta\, M_{j+1,k}(l)\max_{r\in s_k(l)} \| f \|_{k,r,q}.
\end{equation}
Then we have for all $i\in I_j$
\[
\| q_{j,k}(f) \|_{j,i,p} \leq  \gamma^{\frac{p-1}{p}} \delta \,M_{j,k}(i) \,\max_{r\in s_k(i)}  \| f \|_{k,r,q}
\]
and
\[
\| q_{j,k}(f) \|_{j,p} \leq  \gamma^{\frac{p-1}{p}} \delta \,A_{j,k} \,  \| f \|_{k,q}.
\]
\end{lem}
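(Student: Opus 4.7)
My plan is to combine the identity $q_{j,k}(f)=\overline{g}_{j,j+1}\,\hat{q}_{j+1,k}(K_k(f))$ with three key facts: (a) the hypothesis (\ref{assdeltaM}) applied to $K_k(f)$; (b) the stationarity of $K_k$ with respect to each local restriction $\mu_{k,r}$ (a consequence of Assumption \ref{assdisc}), which together with Jensen's inequality gives $\|K_k(f)\|_{k,r,q}\leq\|f\|_{k,r,q}$; and (c) the uniform bound $\overline{g}_{j,j+1,i}\leq\gamma$ from Assumption \ref{assbd}, together with the fact that $\overline{g}_{j,j+1,i}$ is the density of $\mu_{j+1,i}$ with respect to $\mu_{j,i}$.

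The computational heart is the following chain for the $\|\cdot\|_{j,i,p}$-norm, carried out at the level of $p$-th powers. Since $\overline{g}_{j,j+1}=m_{j,j+1}(i)\,\overline{g}_{j,j+1,i}$ on $F_i$, and since $\overline{g}_{j,j+1,i}^{\,p}\leq\gamma^{p-1}\overline{g}_{j,j+1,i}$ by (c), I can write
\[
\|q_{j,k}(f)\|_{j,i,p}^{p}\;=\;m_{j,j+1}(i)^{p}\,\mu_{j,i}\!\bigl(\overline{g}_{j,j+1,i}^{\,p}\,|\hat{q}_{j+1,k}(K_k(f))|^{p}\bigr)\leq m_{j,j+1}(i)^{p}\,\gamma^{p-1}\,\|\hat{q}_{j+1,k}(K_k(f))\|_{j+1,i,p}^{p},
\]
using the change of measure $d\mu_{j+1,i}=\overline{g}_{j,j+1,i}\,d\mu_{j,i}$ in the last step. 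Now the function $\hat{q}_{j+1,k}(K_k(f))$ respects the partition at level $j+1$ (each $K_r$ does not move mass across partition elements at its own level, by Assumption \ref{assdisc}), so for $i\in I_j$ the restriction of $\mu_{j+1,i}$ to the partition $(F_l)_{l\in s_{j+1}(i)}$ yields the convex-combination inequality $\|h\|_{j+1,i,p}\leq\max_{l\in s_{j+1}(i)}\|h\|_{j+1,l,p}$. Combining this with the hypothesis (\ref{assdeltaM}) and with (b) gives
\[
\|\hat{q}_{j+1,k}(K_k(f))\|_{j+1,i,p}\leq\delta\,\max_{l\in s_{j+1}(i)}\Bigl(M_{j+1,k}(l)\,\max_{r\in s_k(l)}\|f\|_{k,r,q}\Bigr).
\]

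To finish, I separate the two maxima using $\bigcup_{l\in s_{j+1}(i)}s_k(l)=s_k(i)$, so the right-hand side is at most $\delta\,\bigl(\max_{l\in s_{j+1}(i)}M_{j+1,k}(l)\bigr)\cdot\max_{r\in s_k(i)}\|f\|_{k,r,q}$, and then invoke the recursion (\ref{Mjkrecursion}), $m_{j,j+1}(i)\max_{l\in s_{j+1}(i)}M_{j+1,k}(l)=M_{j,k}(i)$, to assemble the claimed constant $\gamma^{(p-1)/p}\delta\,M_{j,k}(i)$ after taking $p$-th roots. The second inequality follows by taking the maximum over $i\in I_j$, using $\max_i M_{j,k}(i)=A_{j,k}$ and $\max_i\max_{r\in s_k(i)}\|f\|_{k,r,q}=\|f\|_{k,q}$.

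The only subtle point is justifying step (c): the manipulation $\mu_{j,i}(\overline{g}_{j,j+1}^{\,p}|h|^{p})=m_{j,j+1}(i)^{p}\mu_{j,i}(\overline{g}_{j,j+1,i}^{\,p}|h|^{p})$ and the subsequent bound by $\gamma^{p-1}\mu_{j+1,i}(|h|^{p})$ hinges on $\overline{g}_{j,j+1,i}$ being the Radon-Nikodym density of $\mu_{j+1,i}$ with respect to $\mu_{j,i}$, which is verified directly from the definition in the preamble of Section \ref{secStabFKPL}. Beyond that observation, the proof is a bookkeeping exercise exploiting the partition-preserving property of the kernels.
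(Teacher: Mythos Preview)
Your proof is correct and follows essentially the same route as the paper's: write $q_{j,k}(f)=\overline{g}_{j,j+1}\hat q_{j+1,k}(K_k(f))$, pull out $m_{j,j+1}(i)$, use $\overline{g}_{j,j+1,i}^{\,p}\leq\gamma^{p-1}\overline{g}_{j,j+1,i}$ to change measure to $\mu_{j+1,i}$, pass to the maximum over $l\in s_{j+1}(i)$, apply the hypothesis, use Jensen plus stationarity of $K_k$, and finish with the recursion (\ref{Mjkrecursion}). One small remark: the convex-combination bound $\|h\|_{j+1,i,p}\leq\max_{l\in s_{j+1}(i)}\|h\|_{j+1,l,p}$ holds for \emph{any} $h\in B(E)$ since $\mu_{j+1,i}=\sum_{l\in s_{j+1}(i)}\frac{\mu_{j+1}(F_l)}{\mu_{j+1}(F_i)}\mu_{j+1,l}$, so the partition-respecting property of $\hat q_{j+1,k}(K_k(f))$ is not actually needed at that step.
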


\begin{proof}
Note that we have for $i \in I_j$
\begin{eqnarray}
\| q_{j,k}(f) \|_{j,i,p} &=& \mu_{j,i}(|\overline{g}_{j,j+1}\hat{q}_{j+1,k}(K_k(f))|^p)^{\frac1p}\nonumber\\
&=&  m_{j,j+1}(i)\, \mu_{j,i}(|\overline{g}_{j,j+1,i}\hat{q}_{j+1,k}(K_k(f))|^p)^{\frac1p}\nonumber\\
&\leq& \gamma^{\frac{p-1}{p}}\, m_{j,j+1}(i) \mu_{j+1,i}(|\hat{q}_{j+1,k}(K_k(f))|^p)^{\frac1p}\nonumber\\
&\leq& \gamma^{\frac{p-1}{p}} \, m_{j,j+1}(i) \max_{l\in s_{j+1}(i)} \mu_{j+1,l}(|\hat{q}_{j+1,k}(K_k(f))|^p)^{\frac1p}\nonumber\\
&=& \gamma^{\frac{p-1}{p}} \,  m_{j,j+1}(i) \, \max_{l\in s_{j+1}(i)}  \| \hat{q}_{j+1,k}(K_k(f)) \|_{j,l,p}\nonumber
\end{eqnarray}
and thus by (\ref{assdeltaM})
\begin{eqnarray}
\| q_{j,k}(f) \|_{j,i,p} &\leq& \gamma^{\frac{p-1}{p}} \,  m_{j,j+1}(i) \, \delta\,  \max_{l\in s_{j+1}(i)} M_{j+1,k}(l) \max_{r\in s_{k}(l)}
\| K_k(f) \|_{k,r,q}\nonumber\\
&\stackrel{(\ref{Mjkrecursion})}{\leq}& \gamma^{\frac{p-1}{p}} \,  M_{j,k}(i) \, \delta\, \max_{r\in s_{k}(i)}
\| K_k(f) \|_{k,r,q}\nonumber\\
&\leq& \gamma^{\frac{p-1}{p}} \,  M_{j,k}(i) \, \delta\, \max_{r\in s_{k}(i)}
\| f \|_{k,r,q},\nonumber
\end{eqnarray}
where in the last step we used that by Jensen's inequality $|K_k(f)|^q\leq K_k(|f|^q)$ and that $K_k$ is stationary with respect to $\mu_{k,l}$ for all $l\in I$. This shows the first inequality. The second inequality follows by taking the maximum over $i\in I_j$ on both sides.
\end{proof}

Combining Lemma \ref{lemqhatql} and Corollary \ref{corHypqhatL} we can immediately conclude the types of inequalities needed to derive error bounds from Theorem \ref{thmBound} and Proposition \ref{propCon}:

\begin{prop}\label{corGesamtL}
Consider $p\geq 1$ and $q\geq 1$. Let $q\in [2^r,2^{r+1}]$ for $r\in \mathbb{N}$ and assume $\alpha \gamma^{2^{r+1}-2}<1$. Then for $1 \leq j< k \leq n$ we have
\begin{equation}\label{hypercLIIq}
\|q_{j,k}(f)\|_{j,p} \leq \widetilde{c}_{j,k}(p,q) \|f\|_{k,q} 
\end{equation}
with
\[
\widetilde{c}_{j,k}(p,q) =  A_{j,k} \theta_j(p,q)  \gamma^{\frac{p-1}{p}} \gamma^{\frac{q-1}{q}} \delta(q),
\]
where $\delta(q)$ is as defined in Corollary \ref{corLpL}. 
\end{prop}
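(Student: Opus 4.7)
The proof is a direct combination of the two preceding results. The identity $q_{j,k}(f)=\overline{g}_{j,j+1}\,\hat{q}_{j+1,k}(K_k(f))$ used in the proof of Lemma \ref{lemqhatql} shows that a bound on $q_{j,k}$ reduces to a bound on $\hat{q}_{j+1,k}$ applied to $K_k(f)$. Corollary \ref{corHypqhatL} provides exactly such a bound, and Lemma \ref{lemqhatql} packages it into the desired bound on $q_{j,k}$.

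First I would apply Corollary \ref{corHypqhatL} with starting index $j+1$ in place of $j$. The hypotheses $q\in[2^r,2^{r+1}]$ and $\alpha\gamma^{2^{r+1}-2}<1$ are precisely those needed by Corollary \ref{corLpL} to furnish $\delta(q)$, and the resulting inequality reads, for every $l\in I_{j+1}$ and every $g\in B(E)$,
\[
\|\hat{q}_{j+1,k}(g)\|_{j+1,l,p}\le M_{j+1,k}(l)\,\theta_{j+1}(p,q)\,\gamma^{(q-1)/q}\,\delta(q)\,\max_{r\in s_k(l)}\|g\|_{k,r,q}.
\]
I would then set $g=K_k(f)$. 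Under Assumption \ref{assdisc} the kernel $K_k$ is stationary with respect to each $\mu_{k,r}$, $r\in I_k$, so Jensen's inequality gives $\|K_k(f)\|_{k,r,q}\le\|f\|_{k,r,q}$. This verifies the hypothesis of Lemma \ref{lemqhatql} with $\delta=\theta_{j+1}(p,q)\gamma^{(q-1)/q}\delta(q)$.

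Lemma \ref{lemqhatql} then contributes an extra factor $\gamma^{(p-1)/p}$ from Assumption \ref{assbd} and, via the recursion $M_{j,k}(i)=m_{j,j+1}(i)\max_{l\in s_{j+1}(i)}M_{j+1,k}(l)$ together with maximization over $i\in I_j$, converts the pointwise factor $M_{j,k}(i)$ into the uniform factor $A_{j,k}$. Collecting constants yields the claimed $\widetilde{c}_{j,k}(p,q)$ (with the hypercontractivity constant corresponding to the first kernel $K_{j+1}$ acting inside $\hat{q}_{j+1,k}$). There is no genuine obstacle here; all the substantive work—local $L_p$-stability, hypercontractive upgrade to $L_p$--$L_q$ bounds, and the translation between $\hat q$ and $q$—has already been carried out in Corollaries \ref{corLpL}, \ref{corHypqhatL} and Lemma \ref{lemqhatql}, so the present step amounts to bookkeeping of constants.
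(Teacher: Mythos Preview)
Your proposal is correct and matches the paper's approach, which simply states that the proposition follows by combining Lemma \ref{lemqhatql} and Corollary \ref{corHypqhatL}. Your parenthetical remark about the hypercontractivity constant is also apt: applying Corollary \ref{corHypqhatL} at level $j+1$ naturally produces $\theta_{j+1}(p,q)$ rather than the $\theta_j(p,q)$ printed in the statement, so the index in the paper's display is a minor slip, not a flaw in your argument.
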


The stability inequalities in this section differ from those in \cite{Schw12} by containing the factor $A_{j,k}$ on the right-hand side. For the case treated in that paper, i.e., $|I_j|=1$ for $0\leq j\leq n$ and $F_i=E$ for all $i \in I$, we obtain $A_{j,k}=1$. Thus the results of the present section contain those obtained in there as special cases. For the case of invariant partitions treated in Eberle and Marinelli \cite{EM09}, i.e., $|I_j|=|I_k|$ for $0\leq j<k\leq n$, we obtain
\begin{equation}\label{AjKInvPart}
A_{j,k}=\max_{i\in I_j}\frac{\mu_k(F_i)}{\mu_j(F_i)},
\end{equation}
which is a discrete-time analogue of their constant.\bigskip

Compared to the setting of Section \ref{seqSMCtrees}, the present setting is more general in two respects: We take into account local mixing and local variations in relative densities. To disentangle these two factors to some extent, consider the case where we take into account local mixing but assume that the relative densities $g_{k,k+1}$ are constant on each of the sets $F_j$ with $j\in I_k$. In that case, we have $\gamma=1$ and the inequality (\ref{hypercLIIq}) in Proposition \ref{corGesamtL} becomes
\[
\|q_{j,k}(f)\|_{j,p} \leq A_{j,k}  \theta(p,q) \frac{1}{1-\alpha} \|f\|_{k,q}. 
\]
Thus, compared to the results of Section \ref{seqSMCtrees}, we obtain different norms, we obtain the constants $A_{j,k}$ which are similar to but greater than the corresponding constants $\sqrt{d_{j,k}}$ in Section \ref{seqSMCtrees} and we obtain an additional factor taking into account hyperboundedness and local mixing.

\section{Outlook}\label{MMrellDiscussion}

Since the analysis of Section  \ref{seqSMCtrees} abstracts from most of the local structure -- and thus from some possible problems --  it should be seen as providing a rough but intuitive criterion for identifying settings where the algorithm works or does not work. Consider for instance the mean field Potts model for which slow mixing of the Parallel Tempering algorithm was proved by Bhatnagar and Randall \cite{BR04}, see their paper for more details about the model. Basically, in this model there is a distribution $\mu_0$ which is unimodal and a distribution $\mu_n$ which has four modes of roughly equal weight. Along the transition from $\mu_0$ to $\mu_n$, three additional modes arise which are immediately well-separated from the initial one and have a tiny initial mass, say $\varepsilon$. Then we obtain huge constants $d_{j,k}$ proportional to $\varepsilon^{-2}$ in Proposition \ref{propDJK} and thus in the error bound.\bigskip

For the mean field Ising model, Madras and Zheng \cite{MZ02} proved rapid mixing of Parallel Tempering. The results of Section  \ref{seqSMCtrees} suggest that the same should be true for Sequential MCMC: In the mean field Ising model there is basically one mode which is split into two modes of equal weight at some point as we move from $\mu_0$ to $\mu_n$. In this case, each mode has exactly the same weight as its successors and thus $d_{j,k}=1$. Therefore we can expect a good performance of the algorithm.\bigskip

A similar good performance of Sequential MCMC can be expected in the problem of estimating the parameters of mixture distributions as described in Celeux, Hurn and Robert \cite{CHR98}. There, the target distribution $\mu_n$ is a distribution on the parameter space $\mathbb{R}^{l\times k}$ with the symmetry property that $\mu_n$ assigns the same weight to all states $\theta'$ which can be obtained by permutating the rows of a given $\theta \in \mathbb{R}^{l\times k}$. To illustrate the source of this multimodality problem consider the following example: The Gaussian mixture distributions
\[
0.25\, \mathcal{N}(5,1)+0.75\,  \mathcal{N}(0,1) \text{ and } 0.75\,  \mathcal{N}(0,1)+0.25\,  \mathcal{N}(5,1) 
\]
are identical, i.e., some permutations of the parameters correspond to the same mixture distribution. The target distribution $\mu_n$ of MCMC is a posterior distribution on the parameter space and thus assigns the same weight to $\theta=(0.25,5,1;0.75,0,1)\in \mathbb{R}^{2\times 3}$ and $\theta'=(0.75,0,1; 0.25,5,1)\in \mathbb{R}^{2\times 3}$. The results of Section  \ref{seqSMCtrees} suggest that if this type of permutation symmetry is the sole source of multimodality, Sequential MCMC should work well, since the symmetry is retained when tempering the target distribution. Thus, the areas around each local mode have the same weight at all ``temperatures'', $d_{j,k}=1$. This intuition is confirmed by simulations of Celeux, Hurn and Robert \cite{CHR98} who study an example along these lines and demonstrate that Simulated Tempering can move between local modes while simple MCMC cannot. Permutation symmetries are also one source of multimodality in models from chemical physics, see Wales \cite{W03}. Another message of the analysis of Section  \ref{seqSMCtrees} is however that multimodality caused by permutation symmetries is one of the easiest to deal with cases of multimodality. Thus, examples of this type are rather limited toy examples for testing a multilevel MCMC algorithm's ability to move between disconnected modes.\bigskip

The results of Section  \ref{LPlocal} convey basically the same intuition as those of Section  \ref{seqSMCtrees} but they explicitly take into account local mixing and sufficient similarity of $\mu_k$ and $\mu_{k+1}$ \textit{within} disconnected components. Both aspects are important to keep in mind: Assume we choose $n=1$, let $\mu_0$ be a distribution for which we have excellent global mixing and let $\mu_1$ be an arbitrary other distribution which is strongly multimodal. This setting can be projected to a tree where a number of leafs branch from a single root. Then the results of Section  \ref{seqSMCtrees} seem to suggest, that Sequential MCMC with only these two distributions should work very well, since the root of the tree has mass $1$ under $\mu_0$ and its successors have mass $1$ under $\mu_1$. This -- obviously false -- conclusion can be drawn from Section  \ref{seqSMCtrees} since the model does not take into account local variations in relative densities which may lead to huge errors in the Importance Sampling Resampling step. Basically, in Section \ref{seqSMCtrees} we make the -- implicit -- assumption that relative densities are constant within each component. Similar ``wrong intuitions'' can be derived from the model of Section  \ref{seqSMCtrees} by disregarding the fact that local mixing has to be guaranteed within each component.\bigskip

To sum up, the results of Section  \ref{seqSMCtrees} are to be seen as convenient tools for developing intuitive results about the algorithm such as our comparison of Resampling and Weighting in Section \ref{anexample}. The results of Section \ref{LPlocal} are to be seen as first steps toward explicit error bounds for Sequential MCMC in actual examples. They are only first steps for two reasons: 1) Assumption \ref{assdisc} will not literally be fulfilled in most applications since the probability of transitions between disconnected modes will be negligible but positive. 2) While the Mixing Assumptions \ref{asslocmixing} and \ref{hypercL} are weaker than the corresponding assumptions in most of literature, see \cite{Schw11,Schw12} for more discussion, they will be hard to check in most applications of interest. Concretely, for diffusion processes hyperboundedness and a spectral gap follow, e.g., from a Logarithmic Sobolev inequality which can be verified using the Bakry-Éméry criterion, see, e.g., \cite{ABCFGMRS00}. For the usual discrete-time MCMC dynamics on non-compact state spaces similar tools for proving hyperboundedness are still missing.\bigskip 

Our results demonstrate that problems of the algorithm which stem from disconnected components gaining mass can generally not be alleviated by increasing the number of interpolating distributions: Adding additional steps in the sequence $\mu_0,\ldots, \mu_n$ can only increase the constant $d_{j,k}$ and $A_{j,k}$. This separates this type of problem from problems associated with large local variations in relative densities, i.e. a large value of $\gamma$ in Assumption \ref{assbd}, which can be controlled fairly well through the number of interpolating distributions. The only way to control the constants $d_{j,k}$ and $A_{j,k}$ seems to be to choose an entirely different sequence $\mu_0,\ldots, \mu_{n-1}$.\bigskip

Finally, the results of Sections \ref{seqSMCtrees} and \ref{LPlocal} suggest that, generally, a bad performance of Sequential MCMC is not a property of the target distribution $\mu_n$ but a property of the approximating sequence $\mu_0,\ldots,\mu_{n-1}$ which is a parameter in the algorithm, not in the problem of interest. So far, the theoretical literature on multilevel MCMC algorithms, i.e., Simulated Tempering, Parallel Tempering and Sequential MCMC has largely focused on flattening a target distribution by tempering. In the applied literature, there are many more, sometimes model-specific, proposals for choosing a sequence of distributions such as cutting off the Hamiltonian at chosen minimum levels in addition to tempering, varying the system size or spatial coarse graining, see, e.g. \cite{KZW06,LS98, LYZ06}. A more systematic study of methods for approximating target distributions seems to be an important and highly challenging task for future research.

\end{document}